  \newcommand\imCMsym[4][\mathord]{%
  \DeclareFontFamily{U} {#2}{}
  \DeclareFontShape{U}{#2}{m}{n}{
    <-6> #25
    <6-7> #26
    <7-8> #27
    <8-9> #28
    <9-10> #29
    <10-12> #210
    <12-> #212}{}
  \DeclareSymbolFont{CM#2} {U} {#2}{m}{n}
  \DeclareMathSymbol{#4}{#1}{CM#2}{#3}
}
\newcommand\alsoimCMsym[4][\mathord]{\DeclareMathSymbol{#4}{#1}{CM#2}{#3}}
\theoremstyle{plain}
\newtheorem{theorem}{Theorem}[section]
\newtheorem{proposition}[theorem]{Proposition}
\newtheorem{corollary}[theorem]{Corollary}
\newtheorem{lemma}[theorem]{Lemma}
\newtheorem{conjecture}[theorem]{Conjecture}
\theoremstyle{definition}
\newtheorem{definition}[theorem]{Definition}
\theoremstyle{remark}
\newtheorem{remark}[theorem]{Remark}
\newtheorem*{claimu}{Claim}
\newcommand{\N}{{\mathbb N}}
\newcommand{\Z}{{\mathbb Z}}
\newcommand{\Q}{{\mathbb Q}}
\newcommand{\C}{{\mathbb C}}
\newcommand{\G}{{\mathbb G}}
\newcommand{\A}{{\mathbb A}}
\newcommand{\bu}{\bullet}
\newcommand{\lser}[1]{(\!(#1)\!)}
\newcommand{\pow}[1]{\llbracket #1 \rrbracket}
\newcommand{\spec}[1]{\mathrm{Spec}\left(#1\right)}
\newcommand{\spf}[1]{\mathrm{Spf}\left(#1\right)}
\newcommand{\cur}[1]{\mathcal{#1}}
\newcommand{\cal}[1]{\mathcal{#1}}
\newcommand{\norm}[1]{\left\vert#1\right\vert}
\newcommand{\pn}{(\varphi,\nabla)}
\newcommand{\isoc}[1]{\mathrm{Isoc}^\dagger(#1)}
\newcommand{\rig}{\mathrm{rig}}
\newcommand{\lcris}{\log\text{-}\mathrm{cris}}
\newcommand{\et}{\mathrm{\acute{e}t}}
\newcommand{\ekd}{\cur{E}_K^\dagger}
\newcommand{\rk}{\cur{R}_K}
\title{Around $\ell$-independence}
\author{Bruno Chiarellotto}
       \address[Chiarellotto]{Dipartimento di Matematica ``Tullio Levi-Civita'' \\
       Universit\`a Degli Studi di Padova \\
        Via Trieste, 63 \\ 
        35121 Padova \\ 
        Italia}
       \email{chiarbru@math.unipd.it}
\author{Christopher Lazda}
       \address[Lazda]{Dipartimento di Matematica ``Tullio Levi-Civita'' \\
       Universit\`a Degli Studi di Padova \\
        Via Trieste, 63 \\ 
        35121 Padova \\ 
        Italia}
       \email{lazda@math.unipd.it}
\begin{document}

\begin{abstract} In this article we study various forms of $\ell$-independence (including the case $\ell=p$) for the cohomology and fundamental groups of varieties over finite fields and equicharacteristic local fields. Our first result is a strong form of $\ell$-independence for the unipotent fundamental group of smooth and projective varieties over finite fields, by then proving a certain `spreading out' result we are able to deduce a much weaker form of $\ell$-independence for unipotent fundamental groups over equicharacteristic local fields, at least in the semistable case. In a similar vein, we can also use this to deduce $\ell$-independence results for the cohomology of smooth and proper varieties over equicharactersitic local fields from the well-known results on $\ell$-independence for smooth and proper varieties over finite fields. As another consequence of this `spreading out' result we are able to deduce the existence of a Clemens--Schmid exact sequence for formal semistable families. Finally, by deforming to characteristic $p$ we show a similar weak version of $\ell$-independence for the unipotent fundamental group of a semistable curve in mixed characteristic.
\end{abstract}

\maketitle 

\tableofcontents

\section{Introduction}

One intriguing aspect of the proof of the Weil conjectures by Grothendieck and his school, via the theory of \'etale cohomology, is in the need to choose an auxiliary prime $\ell$; the field $\Q_\ell$ of $\ell$-adic numbers then providing the coefficient field for a Weil cohomology theory. (To fix terms, let $F$ be a field, $X/F$ an algebraic variety, $F^\mathrm{sep}$ a separable closure and $G_F$ the corresponding absolute Galois group.) While \emph{a priori} it is not remotely clear how the cohomology groups $H^i_\et(X_{F^\mathrm{sep}},\Q_\ell)$ (considered as representations of $G_F$) are related for different values of $\ell\neq \mathrm{char}(F)$, the Riemann hypothesis \cite{Del74a} shows that at least over finite fields, they essentially contain the same information. Indeed, for curves $C$ (and again $\ell\neq \mathrm{char}(F)$) one essentially has $H^1_\et(C_{F^{\mathrm{sep}}},\Q_\ell) \cong (\mathrm{Jac}(C_{F^\mathrm{sep}})\otimes \Q_\ell )^\vee$ (again as Galois representations), $H^1$ being the only group of real interest in this case. 

To account for this plethora of different but subtly related cohomology groups, Grothendieck proposed the theory of pure motives, which for smooth and proper varieties at least should provide some form of algebro-geometric object $h^*(X)=\bigoplus_i h^i(X)$ such that $H^i_\et(X_{F^\mathrm{sep}},\Q_\ell)\cong h^i(X)\otimes \Q_\ell$ (again, suitably interpreted). Thus the groups $H^i_\et(X_{F^\mathrm{sep}},\Q_\ell)$ should be `independent of $\ell$', since all information they contain can be deduced from the `absolute' cohomology groups $h^i(X)$. In particular for curves, we essentially have $h^1(C)=\mathrm{Jac}(C_{F^\mathrm{sep}})^\vee$.

While the theory of motives, despite significant recent advances, is still incomplete, its prediction that various cohomology groups should be `independent of $\ell$' can, over certain ground fields, be precisely formulated in ways which do not depend on such a theory. For example, one such statement is that for smooth and proper varieties $X$ over finite fields of characteristic $p$, the trace of Frobenius on $H^i_\et(X_{F^\mathrm{sep}},\Q_\ell)$ (for $\ell\neq p$) has values in $\Z$ and is independent of $\ell$. Since the absolute Galois group of finite fields is generated by Frobenius, this tells us that, up to semisimplification, the various Galois representations $H^i_\et(X_{F^\mathrm{sep}},\Q_\ell)$ are, once restricted to $\mathrm{Frob}^{\Z}\subset G_F$, pairwise isomorphic (at least after base changing to any suitably large field $\Omega$).

Over more general fields, formulating similarly precise statements is somewhat tricky, since the Galois representation $H^i_\et(X_{F^\mathrm{sep}},\Q_\ell)$ depends on the interaction between the topology of $\Q_\ell$ and the profinite topology on $G_F$. For local fields (i.e. complete, discretely valued fields with finite residue fields) one can, thanks to the theory of Weil--Deligne representations and Grothendieck's $\ell$-adic monodromy theorem (and at least for $\ell$ different to the residue characteristic) package up the information contained in $H^i_\et(X_{F^\mathrm{sep}},\Q_\ell)$ in a way that only depends on $\Q_\ell$ as an abstract field, it therefore makes sense to conjecture that these groups become `pairwise isomorphic' over suitably large fields.

For $\ell=p$ one needs a slightly different approach. Over finite fields, one needs to replace \'etale cohomology by crystalline cohomology $H^i_\mathrm{cris}(X/K)$, and over mixed characteristic local fields one needs to replace the $\ell$-adic monodromy theorem by it's $p$-adic analogue, stating that the \'etale cohomology of varieties over mixed characteristic local fields is potentially semistable. Over finite fields, the formalism of crystalline cohomology (including weights) shows that the `independence of $\ell$' results can be extended to include the case $\ell=p$, and over mixed characteristic local fields it was shown in \cite{Fon94c} how to use $p$-adic Hodge theory to attach Weil--Deligne representations to de Rham Galois representations, thus again making sense of the $\ell$-independence conjectures `at $p$'. When $F$ is an equicharacteristic local field, then a more subtle use of crystalline cohomology allows one to view the cohomology of smooth and proper varieties over $F$ as $\pn$-modules over the Robba ring, and again the appropriate local monodromy theorem allows one to construct associated Weil--Deligne representations (see for example \S3 of \cite{Mar08}). Thus following Fontaine \cite{Fon94c} one can again formulate $\ell$-independence conjectures in this case, including at $\ell=p$. 

One can also ask about other expected `motivic' invariants, the most basic being the unipotent fundamental group. When $\ell\neq p$ then this can be defined as the $\Q_\ell$-pro-unipotent completion of the geometric \'etale fundamental group. When $F$ is finite and $\ell\neq p$ it is therefore acted on by Frobenius, and again we can ask whether or not this `non-abelian' representation of $\mathrm{Frob}^{\Z}$ is independent of $\ell$. When $F$ is local (and again when $\ell\neq p$) it is acted on `continuously' by $G_F$, so we can apply Grothendieck's monodromy theorem and produce a `non-abelian' Weil--Deligne representation. For $\ell=p$ and $F$ finite one again needs to take a different approach to define the pro-unipotent fundamental group, using Tannakian duality and the theory of isocrystals to produce a pro-unipotent group scheme on which  Frobenius acts. When $F$ is equicharacteristic local, one needs to appeal to the categories of overconvergent isocrystals introduced in \cite{LP16} and studied in \cite{Laz16b} in order to define the pro-unipotent fundamental group as a `non-abelian $\pn$-module' over the bounded Robba ring $\cur{E}^\dagger$. Again, appealing to the $p$-adic local monodromy theorem allows us to produce a non-abelian $p$-adic Weil--Deligne representations in this case. Hence in both these cases (i.e. $F$ finite or equicharacteristic local) one can formulate precise $\ell$-independence conjectures for the unipotent fundamental group, including the case $\ell=p$. When $F$ is mixed characteristic local and $\ell=p$ things are more tricky, since in order to apply Fontaine's methods one first needs to know that the $p$-adic unipotent $\pi_1$ is de Rham as a $G_F$-representation. For semistable curves this follows from the results of \cite{AIK15} and in general from results of D\'eglise and Nizio\l{} \cite{DN15}.

The purpose of this article, then, is to prove various cases of these $\ell$-independence conjectures, all including the case $\ell=p$. After recalling the various facts we need concerning cohomology and fundamental groups in \S\ref{recap}, the precise formulation of the conjectures is given in \S\ref{form}. In the rest of the article we will then prove certain versions of them in the following four cases.
\begin{enumerate} \item Unipotent fundamental groups of smooth projective varieties over finite fields.
\item Cohomology groups of smooth, proper varieties over equicharacteristic local fields.
\item Unipotent fundamental groups of smooth, proper varieties with semistable reduction over equicharacteristic local fields.
\item Unipotent fundamental groups of smooth, proper curves with semistable reduction over mixed characteristic local fields.
\end{enumerate}
In the first case, the subject of \S\ref{fgff}, we prove the strongest possible form of $\ell$-independence, stating that the various unipotent $\pi_1^\mathrm{uni}$'s become isomorphic as `non-abelian' Weil--Deligne representations over suitably large fields.  Here the basic idea behind the proof is formality, in that the rational homotopy type is a formal consequence of the cohomology ring.

In the second case, treated in \S\ref{clf} and \S\ref{clf2}, we show a weaker form of $\ell$-independence, stating that the Frobenius seimsimplifications of the various cohomology groups become isomorphic over suitably large fields. The proof proceeds by first considering the case when the variety in question has semistable reduction (Theorem \ref{main2}). In this situation we use a `spreading out' result to reduce to the case of a global semistable family, and then apply a result of Deligne in \cite{Del80} stating that `$\ell$-independence' of a suitable local system on an open curve implies $\ell$-independence at the missing points. In fact Deligne's result only applies for $\ell\neq p$, but it is not difficult to follow through his proof `at $p$', as we do so in \S\ref{dp}. The general case is then deduced by using alterations and an argument involving weights (Theorem \ref{main22}).

In the third case, the topic of \S\ref{uel}, we prove a fairly weak form of $\ell$-independence, stating that the Frobenius semisimplifications of the graded pieces of the universal enveloping algebra of the Lie algebra of $\pi_1^\mathrm{uni}$ (for the filtration coming from the augmentation ideal) become isomorphic over suitably large fields. The proof here follows that of the second case, again reducing to the case of a global semistable family and applying the same independence result of Deligne. Finally, in \S\ref{cmc} we consider the last case, proving the same result for curves in mixed characteristic by reducing to the equicharacteristic situation.

\subsection*{Acknowledgements}

B. Chiarellotto was supported by the grants MIUR-PRIN 2015 "Number Theory and Arithmetic Geometry" and Padova PRAT CDPA159224/15. C. Lazda was supported by a Marie Curie fellowship of the Istituto Nazionale di Alta Matematica. Both authors would like to thank W. Nizio\l{} for suggesting the question of $\ell$-independence for unipotent fundamental groups over fields such as $k\lser{t}$, and C. Lazda would like to thank A. P\'al for first interesting him in $\ell$-independence questions in general, as well as for useful discussions related to the contents of this paper. We would also like to thank the anonymous referee for a careful reading of the article.

\subsection*{Notations and conventions}

We will let $F$ denote either a finite field $k$ or a local field with residue field $k$ (usually equicharacteristic). As always, $p$ will be the characteristic of $k$, and we will let $q$ denote its cardinality. The term `Frobenius' without further qualification will always be understood to mean \emph{geometric} Frobenius, and Frobenius structures on isocrystals will, unless specified otherwise, mean $q$-power Frobenius structures. We will let $W=W(k)$ denote the ring of Witt vectors of $k$, $K$ its fraction field and $K^\mathrm{un}$ the maximal unramified extension of $K$. By `variety' we will mean `separated scheme of finite type'. We will denote a separable closure of $F$ by $F^{\mathrm{sep}}$, and the corresponding absolute Galois group by $G_F$. We will denote by $\cur{R}$ the Robba ring over $K$, and by $\underline{\mathbf{M}\Phi}^\nabla_{\cur{R}}$ the category of $\pn$-modules over $\cur{R}$. We will denote by $\ekd \subset \rk$ the bounded Robba ring.

\section{Preliminaries on fundamental groups and \texorpdfstring{$p$}{p}-adic cohomology}\label{recap}

The purpose of this section is to recall some general facts about unipotent fundamental groups, as well as some of the material from \cite{LP16} concerning $p$-adic cohomology over equicharacteristic local fields. So let $X/F$ be smooth, geometrically connected, quasi-projective variety and $x\in X(F)$. Then for any $\ell\neq \mathrm{char}(F)$ the $\ell$-adic version of the unipotent fundamental group has two different but equivalent interpretations. First of all we may consider the \'etale fundamental group $\pi_1^\et(X,\bar{x})$ based at a geometric point above $x$, together with the homotopy exact sequence
\[ 1 \rightarrow \pi_1^\et(X_{F^{\mathrm{sep}}},\bar{x})\rightarrow\pi_1^\et(X,\bar{x})\rightarrow  G_F \rightarrow 1.\]
The point $x$ induces a splitting of this exact sequence, and this gives rise to an action of $G_F$ on $\pi_1^\et(X_{F^{\mathrm{sep}}},\bar{x})$. Applying the Malcev completion functor over $\Q_\ell$ then gives a pro-unipotent group scheme $\pi_1^\et(X_{F^{\mathrm{sep}}},\bar{x})_{\Q_\ell}$ over $\Q_\ell$ which by functoriality is acted on by $G_F$.

Alternatively, we can use a Tannakian approach. That is, we consider the category $\mathrm{Uni}_{\Q_\ell}(X_{F^{\mathrm{sep}}})$ of unipotent lisse $\Q_\ell$-sheaves on $X$. The point $x$ gives rise to a fibre functor $x^*:\mathrm{Uni}_{\Q_\ell}(X_{F^{\mathrm{sep}}})\rightarrow \mathrm{Vec}_{\Q_\ell}$ and hence we may define $\pi_1^\et(X_{F^{\mathrm{sep}}},\bar{x})_{\Q_\ell}$ to be the (pro-unipotent) affine group scheme representing automorphisms of $x^*$. By functoriality $G_F$ acts on $\mathrm{Uni}_{\Q_\ell}(X_{F^\mathrm{sep}})$, preserving $x^*$, and hence by Tannaka duality on $\pi_1^\et(X_{F^{\mathrm{sep}}},\bar{x})_{\Q_\ell}$. As the notation suggests, this coincides with the previous construction by \cite[Expos\'e VI, \S1.4.2]{SGA5}. 

While the first definition does not behave well when $\ell =\mathrm{char}(F)$, the second has an clear analogue, at least when $F=k$ is finite, replacing lisse $\Q_\ell$-sheaves on $X_{F^{\mathrm{sep}}}$ by overconvergent isocrystals on $X/K$, the category of which we will denote by $\mathrm{Isoc}^\dagger(X/K)$. Let $\mathrm{Uni}_K(X)$ denote its full subcategory of unipotent objects. Again, the point $x$ induces a fibre functor $x^*:\mathrm{Uni}_K(X)\rightarrow \mathrm{Vec}_K$ and by definition $\pi_1^\rig(X/K,x)$ is the (pro-unipotent) affine group scheme representating automorphisms of $x^*$. The Frobenius pullback functor $F^*:\mathrm{Uni}_K(X)\rightarrow \mathrm{Uni}_K(X)$ is an autoequivalence, and induces an isomorphism $F_*:\pi_1^\rig(X/K,x)\rightarrow \pi_1^\rig(X/K,x)$.

When $F=k\lser{t}$ is equicharacteristic local, we need to use the machinery of \cite{LP16} to define the $p$-adic unipotent fundamental groups. Since we will also need the results from \emph{loc. cit.} concerning $p$-adic cohomology for varieties over such $F$, we will take the opportunity now to quickly recap some of the main points from \cite{LP16}. There we constructed, for any variety $X/F$, cohomology groups $H^i_\rig(X/\ekd)$ as $\pn$-modules over the bounded Robba ring $\ekd$, satisfying all the expected properties of an `extended' Weil cohomology theory. In other words, we have finite dimensionality, vanishing in the expected degrees, versions with compact support or support in a closed subscheme, Poincar\'e duality, K\"{u}nneth formula \&c. We also constructed a category $\isoc{X/\ekd}$ of overconvergent isocrystals on $X$ relative to $\ekd$, and in \cite[Corollary 2.2]{Laz16b} it was proved that this category is Tannakian. The point $x$ provides a fibre functor
\[ x^*:\mathrm{Uni}_{\ekd}(X) \rightarrow \mathrm{Vec}_{\ekd} \]
from the subcategory of unipotent isocrystals, and again we may define $\pi_1^\rig(X/\ekd,x)$ to be the corresponding pro-unipotent group scheme over $\ekd$. It was also shown in \cite[\S5]{Laz16b} how to put a canonical `$\pn$-module structure' on $\pi_1^\rig(X/\ekd,x)$, that is a $\pn$-module structure on its Hopf algebra, which simply as a $\pn$-module (i.e. forgetting the Hopf algebra structure) is a direct limit of its finite dimensional sub-$\pn$-modules. 

Base changing the whole situation to $\rk$ we therefore obtain a non-abelian $\pn$-module
\[ \pi_1^\rig(X/\rk,x):=\pi_1^\rig(X/\ekd,x) \otimes_{\ekd} \rk \]
over $\rk$, as well as abelian ones
\[ H^i_\rig(X/\rk):=H^i_\rig(X/\ekd)\otimes_{\ekd} \rk .\]
These should be considered the $p$-adic analogues of the $\ell$-adic Galois representations $\pi_1^\et(X_{F^{\mathrm{sep}}},\bar{x})_{\Q_\ell}$ and $H^i_\et(X_{F^\mathrm{sep}},\Q_\ell)$ respectively.

\section{The formalism of \texorpdfstring{$\ell$}{l}-independence, statement of the conjectures}\label{form}

In this section we will review some of the formalism behind the notion of $\ell$-independence over finite and equicharacteristic local fields, following \cite{Del80,Fon94c}. For this section, $F$ will stand for either a finite field $k$ or a local field with residue field $k$.

\begin{definition} We define the Weil group $W_F\subset G_F$ as follows.
\begin{itemize} \item If $F=k$ then we will let $W_F$ denote the subgroup of $G_F\cong \mathrm{Frob}_k^{\widehat\Z}$ consisting of \emph{integer} powers of Frobenius.
\item If $F$ is local, then we will let $W_F$ denote the inverse image of $W_k$ under the surjection $G_F\twoheadrightarrow G_k$.
\end{itemize}
It is topologised as follows: if $F$ is finite then $W_F$ is given the discrete topology, if $F$ is local then $W_F$ is given the unique topology such that the inertia group $I_F$ is open. 
\end{definition}

Since $W_F$ is pro-discrete, we may form the associated group scheme $W^{\mathrm{alg}}_F$ over $\Q$ by writing $W_F=\varprojlim_i G_i$ in the category of groups, viewing each discrete group $G_i$ as a group scheme over $\Q$ and then setting $W^\mathrm{alg}_F=\varprojlim_i G_i$ in the category of group schemes.

\begin{lemma} For any field $E$ of characteristic $0$ there is an equivalence of categories between continuous representations $\rho:W_F\rightarrow \mathrm{GL}(V)$ in finite dimensional $E$-vector spaces and finite dimensional algebraic representations $\rho^{\mathrm{alg}}:W_F^\mathrm{alg}\rightarrow \mathrm{GL}(V)$ of the group scheme $W_F^\mathrm{alg}$ over $E$.
\end{lemma}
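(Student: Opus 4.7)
My plan is to reduce to the analogous statement for each discrete quotient $G_i$ appearing in the presentation $W_F = \varprojlim_i G_i$, and then to assemble these individual equivalences into the desired one by passing to the inverse limit.

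First I would unpack the continuity hypothesis. Since $E$ carries no distinguished topology beyond the discrete one, I equip $\mathrm{GL}(V)$ with the discrete topology, so that a continuous representation of $W_F$ is exactly one whose kernel is open, equivalently one that factors through some discrete quotient $G_i$ in the inverse system. This already identifies the category on the left-hand side with the filtered colimit $\varinjlim_i \mathrm{Rep}_E(G_i)$, taken over the (surjective) transition maps $W_F \twoheadrightarrow G_i$.

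Next, for each individual $G_i$ I would invoke the standard algebraic hull (pro-algebraic completion) construction: the category $\mathrm{Rep}_E(G_i)$ of finite-dimensional $E$-linear representations is neutral Tannakian with respect to the forgetful fibre functor, and its Tannakian fundamental group is an affine group scheme over $E$ which, by construction, is the base change to $E$ of the group scheme $G_i$ appearing inside $W_F^{\mathrm{alg}} = \varprojlim_i G_i$. Tannakian reconstruction then yields, for each $i$, an equivalence $\mathrm{Rep}_E(G_i) \simeq \mathrm{Rep}_E^{\mathrm{alg}}\bigl((G_i)_E\bigr)$, naturally in $i$.

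Finally, since $W_F^{\mathrm{alg}}$ is the inverse limit of the $G_i$ in affine group schemes, its coordinate Hopf algebra is the filtered colimit of the $\mathcal{O}(G_i)$, and any finite-dimensional comodule over such a colimit is necessarily defined over some term of the system. This gives $\mathrm{Rep}_E^{\mathrm{alg}}\bigl((W_F^{\mathrm{alg}})_E\bigr) = \varinjlim_i \mathrm{Rep}_E^{\mathrm{alg}}\bigl((G_i)_E\bigr)$, and chaining the three identifications produces the stated equivalence. I expect the only genuinely delicate point to be bookkeeping: checking that the individual Tannakian equivalences are compatible with the surjective transition maps in the inverse system, so that they really do assemble into a single equivalence on the colimit. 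This is formal from the functoriality of the algebraic hull under quotient maps, but it is the step where one must be a little careful.
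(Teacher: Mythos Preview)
The paper states this lemma without proof, so there is nothing to compare against directly; your outline is the natural argument and is correct in overall shape. One point does need correction, however.

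In your second step you assert that the Tannakian fundamental group of $\mathrm{Rep}_E(G_i)$ is the constant group scheme $(G_i)_E$. This is false in general: the Tannakian group of $\mathrm{Rep}_E(G_i)$ is the \emph{pro-algebraic completion} of $G_i$ over $E$, which for infinite $G_i$ (for example $G_i \cong (I_F/N)\rtimes\Z$ when $F$ is local) is strictly larger than the constant group scheme. Already for $G_i=\Z$ the Tannakian group is not $\underline{\Z}$ but an affine group scheme whose representation category is ``vector spaces with an arbitrary automorphism''.

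Fortunately the equivalence $\mathrm{Rep}_E(G_i)\simeq\mathrm{Rep}_E^{\mathrm{alg}}\bigl((G_i)_E\bigr)$ you need holds for a much more elementary reason, and Tannakian duality is irrelevant. By definition the constant group scheme $(G_i)_E$ is $\coprod_{g\in G_i}\spec{E}$, so an $E$-morphism $(G_i)_E\to\mathrm{GL}(V)$ is nothing but a set map $G_i\to\mathrm{GL}(V)(E)$, and compatibility with the group law says precisely that this map is a group homomorphism. Hence algebraic representations of $(G_i)_E$ coincide with abstract $E$-linear representations of $G_i$, by inspection. With this replacement your steps~1 and~3 (factorisation of a continuous map to a discrete target through some $G_i$, and the fact that a finite-dimensional comodule over $\varinjlim_i\mathcal{O}(G_i)$ is defined at some finite stage) are correct as written, and the argument goes through.
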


If $F$ is local then there is a natural action of $W_F^\mathrm{alg}$ on the additive group $\G_a$ by setting 
\[ gxg^{-1} = q^{-v(g)}x\]
where $v(g)$ is such that $g$ maps to $\mathrm{Frob}_k^{v(g)}$ in $W_k$. 

\begin{definition} We define the Weil--Deligne group $W'_F$ to be $W_F^\mathrm{alg}$ if $F=k$ is finite or $W^\mathrm{alg}_F\ltimes \G_a$ if $F$ is local.
\end{definition}

Note that the usage of the term `Weil--Deligne' group to refer to $W_F^\mathrm{alg}$ over finite fields is not at all standard, and we employ it here only to be able to make uniform statements for finite and local fields.

We will now let $E$ be a field of characteristic $0$, and let $\cur{C}\rightarrow \mathrm{Aff}_E$ be a category fibred over the category of affine $E$-schemes. In other words for every $E$-algebra $R$ we have a category $\cur{C}_R$ and for every morphism $R\rightarrow R'$ we have a pullback (or `base extension') functor $\cur{C}_R\rightarrow \cur{C}_{R'}$. The examples the reader should keep in mind are the following.

\begin{itemize} \item the category of (quasi-)coherent $\cur{O}$-modules;
\item the category of pro-nilpotent Lie $\cur{O}$-algebras;
\item the category of (quasi-coherent) Hopf $\cur{O}$-algebras;
\item the category of pro-unipotent group schemes.
\end{itemize}
In particular, given an object $X\in \cur{C}_E$ it makes sense to speak of the functor on $E$-algebras
\[ \underline{\mathrm{Aut}}_E(X)(R)=\mathrm{Aut}_{\cur{C}_R} (X\otimes R ).\]

\begin{definition} A $\cur{C}_E$-representation of $W_F'$ is an object $X\in \cur{C}_E$ together with a morphism of functors $W_F'\rightarrow \underline{\mathrm{Aut}}_E(X)$. The category of such objects is denoted $\mathrm{Rep}_{\cur{C}_E}(W_F')$.
\end{definition}

Note that when $\cur{C}$ is the category of coherent $\cur{O}$-modules, then one recovers the usual notion of an $E$-valued Weil(--Deligne) representation, in general the point of this slightly fiddly approach is in order to have a well-defined notion for non-abelian objects such as Hopf algebras or group schemes. We can now finally make precise the concept of $\ell$-independence for (not necessarily abelian) Weil--Deligne representations.

\begin{definition}[\cite{Del73}, \S8] \begin{enumerate} \item Let $E'/E$ be a field extension, and $X\in \mathrm{Rep}_{\cur{C}_{E'}}(W_F')$ a $\cur{C}_{E'}$-valued Weil--Deligne representation. Then we say that $X$ is defined over $E$ if for any algebraically closed field $\Omega$ containing $E'$, $X \otimes \Omega\in \mathrm{Rep}_{\cur{C}_\Omega}(W_F')$ is isomorphic to all of its $\mathrm{Aut}(\Omega/E)$-conjugates.
\item Let $\{E_i\}_{i\in I}$ be a family of field extensions of $E$, and $\{X_i\}_{i\in I}$ a family of $\cur{C}_{E_i}$-valued Weil--Deligne representations. Then we say that $\{X_i \}_{i\in I}$ is \emph{$E$-compatible} if each $X_i$ is defined over $E$, and for any $i,j$ and any algebraically closed field $\Omega$ containing $E_i$ and $E_j$, the objects $X_i\otimes\Omega$ and $X_j\otimes\Omega$ in $\mathrm{Rep}_{\cur{C}_\Omega}(W_F')$ are isomorphic.
\end{enumerate}
\end{definition}

When $\cur{C}$ is the category of coherent $\cur{O}$-modules, then we also have a weaker notion of compatibility described as follows. If $F$ is finite then we define the `monodromy' filtration $M_\bu$ on a representation $V$ of $W_F'$ to be the trivial one, i.e. $0$ in negative degrees and $V$ otherwise. If $F$ is local then we will let $M_\bu$ be the usual monodromy filtration coming from viewing the $\G_a$ action as a nilpotent endomorphism $N:V\rightarrow V$. 

\begin{definition} Let $\{V_i\}_{i\in I}$ be a family of finite dimensional $E_i$-valued Weil--Deligne representations as above. We say that $\{V_i\}_{i\in I}$ is \emph{weakly $E$-compatible} if for all $k$ the character
$$ \mathrm{Tr}(-|\mathrm{Gr}_k^MV_i): W_F \rightarrow E_i
$$
of the $k$-th graded piece of the monodromy filtration has values in $E$ and is independent of $i$.
\end{definition}

Note that the difference between weak compatibility and compatibility is essentially that of Frobenius semisimplicity. For a representation $V$ of $W_F'$, we will let $V^{F\text{-}\mathrm{ss}}$ denote its Frobenius semisimplification. Thus when $F=k$ is finite this is just the semisimplification, and when $F$ is local we semisimplify the underlying $W_F$-representation.

\begin{lemma} \label{ws1} Let $\{V_i\}$ be a family of $W_F'$-representations. Then $\{V_i\}$ is weakly $E$-compatible if and only if $\{ V_i^{F\text{-}\mathrm{ss}}\}$ is $E$-compatible.
\end{lemma}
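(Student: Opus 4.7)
The plan is to unwind both conditions through the standard representation-theoretic dictionary: Frobenius-semisimple Weil--Deligne representations are determined up to isomorphism by their graded pieces for the monodromy filtration, and semisimple representations of $W_F$ are in turn determined by their characters. So weak compatibility (an equality of characters of graded pieces) should translate precisely into $E$-compatibility of the Frobenius semisimplifications.

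More concretely, I would first reduce ``defined over $E$'' in the Frobenius-semisimple case to a statement about characters. Fix an algebraically closed field $\Omega$ containing all the $E_i$. Since $V^{F\text{-ss}}$ is a semisimple $W_F$-representation (with, in the local case, a compatible nilpotent $N$), the underlying $W_F$-representation $V^{F\text{-ss}} \otimes \Omega$ is determined by its character; in particular it is isomorphic to all its $\mathrm{Aut}(\Omega/E)$-conjugates if and only if $\mathrm{Tr}(-\mid V^{F\text{-ss}})$ is valued in $E$. Because the monodromy filtration $M_\bu$ on $V$ is defined purely from $N$ and preserved by $W_F$, the same character argument applies to each $\mathrm{Gr}^M_k V$; and $\mathrm{Tr}(-\mid \mathrm{Gr}^M_k V^{F\text{-ss}})=\mathrm{Tr}(-\mid \mathrm{Gr}^M_k V)$ because semisimplification does not change characters.

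The main step is then to show that for Frobenius-semisimple $W_F'$-representations $V, V'$ over $\Omega$, the following are equivalent:
\begin{enumerate}
\item $V \cong V'$ as $W_F'$-representations;
\item $\mathrm{Gr}^M_k V \cong \mathrm{Gr}^M_k V'$ as $W_F$-representations for all $k$;
\item $\mathrm{Tr}(-\mid \mathrm{Gr}^M_k V) = \mathrm{Tr}(-\mid \mathrm{Gr}^M_k V')$ for all $k$.
\end{enumerate}
The equivalence of (2) and (3) is immediate from semisimplicity of the graded pieces as $W_F$-modules. The implication (1)$\Rightarrow$(2) is functorial. The content is (2)$\Rightarrow$(1), and it follows, in the local case, from Jacobson--Morozov: complete $N$ to an $\mathfrak{sl}_2$-triple and decompose $V = \bigoplus_{k\geq 0} P_k \otimes \mathrm{Sym}^k$ as a representation of $\mathfrak{sl}_2$, where $P_k$ is the space of primitive vectors of weight $k$ and inherits a natural $W_F$-action. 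One then identifies $\mathrm{Gr}^M_j V \cong \bigoplus_{k\geq |j|,\, k\equiv j\,(2)} P_k$ as $W_F$-representations (up to the appropriate Tate twist built into the $\mathfrak{sl}_2$-convention), so a descending induction on $k$ recovers each $P_k$ from the graded pieces, and hence $V$ itself up to isomorphism. In the finite field case the monodromy filtration is trivial, so (2) is simply the isomorphism $V \cong V'$ as semisimple $W_F$-representations, and the statement is immediate.

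Putting these two steps together proves the lemma: $\{V_i\}$ is weakly $E$-compatible $\iff$ the characters of $\mathrm{Gr}^M_k V_i$ take values in $E$ and agree for all $i, k$ $\iff$ each $\mathrm{Gr}^M_k V_i^{F\text{-ss}}$ is defined over $E$ as a semisimple $W_F$-representation and these are pairwise isomorphic over any large $\Omega$ $\iff$ each $V_i^{F\text{-ss}}$ is defined over $E$ and the family is pairwise isomorphic over any large $\Omega$, i.e.\ $\{V_i^{F\text{-ss}}\}$ is $E$-compatible. The main potential obstacle is the Jacobson--Morozov step, which is where Frobenius semisimplicity is genuinely used and where one must check that the $\mathfrak{sl}_2$-decomposition is $W_F$-equivariant so that the $P_k$ really are $W_F$-subrepresentations; this follows from the uniqueness of the completion of $N$ to an $\mathfrak{sl}_2$-triple inside the reductive part of the centralizer, together with the fact that $W_F$ acts on $V$ commuting with its own semisimple part and scales $N$ by a character.
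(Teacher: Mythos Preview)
Your argument is correct. The paper's own proof is a one-line citation: the finite-field case is immediate, and the local case is exactly \cite[Proposition~8.9]{Del73}. What you have written is essentially a self-contained reproof of that proposition.

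The route is slightly different from Deligne's. Deligne argues by directly classifying the indecomposable Frobenius-semisimple Weil--Deligne representations as twists $\sigma\otimes\mathrm{sp}(n)$ of an irreducible $W_F$-representation by the special representation, and then reads off that the monodromy graded pieces determine the multiplicities. You instead invoke Jacobson--Morozov to produce an $\mathfrak{sl}_2$-decomposition $V\cong\bigoplus_k P_k\otimes\mathrm{Sym}^k$ and recover the primitive pieces $P_k$ from the $\mathrm{Gr}^M_j$ by descending induction. The two arguments amount to the same Lefschetz-type bookkeeping; yours is a bit more structural, at the cost of the equivariance check you correctly flag as the delicate point (one really uses that the Zariski closure of $\rho(W_F)$ is reductive, so that the $\mathfrak{sl}_2$-completion can be chosen compatibly with the $W_F$-action up to the Tate twist governing $N$). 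Either way one lands on the same statement, and for the purposes of this paper the citation suffices.
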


\begin{proof} When $F$ is finite this is clear, when $F$ is local this is \cite[Proposition 8.9]{Del73}.
\end{proof}

Now, if $X/F$ is a variety and $\ell\neq p$, then one can view the $i$th \'etale cohomology group $H^i_\et(X_{F^\mathrm{sep}},\Q_\ell)$ as a continuous representation of $G_F$, and hence as an $\Q_\ell$-valued algebraic representation of $W_F'$, when we consider it as the latter we will generally write $H^i_\ell(X)$. For $\ell=p$ and $F=k$ is finite, the natural Frobenius action on the rigid cohomology $H^i_\rig(X/K)$ allows us to consider it as a $K$-valued algebraic representation $H^i_p(X)$ of $W_F'$. For $\ell=p$ and $F\cong k\lser{t}$ equicharacteristic local, then we may consider the rigid cohomology groups $H^i_\rig(X/\rk)$ of $X$ as $\pn$-modules over the Robba ring $\rk$. Using Marmora's functor \cite{Mar08} from $\pn$-modules over $\rk$ to $K^\mathrm{un}$-valued Weil--Deligne representations, we may therefore consider the Weil--Deligne representation $H^i_p(X)$ associated to $H^i_\rig(X/\rk)$. When $\ell=p$ and $F$ is mixed characteristic local, then the \'etale cohomology $H^i_\et(X_{F^\mathrm{sep}},\Q_p)$ is de Rham as a representation of $G_F$, hence we may follow Fontaine's construction \cite{Fon94c} to produce a $K^\mathrm{un}$-valued Weil--Deligne representation, which we will again denote $H^i_p(X)$. Of course, we may also consider versions $H^i_{c,\ell}(X)$ with compact support. For cohomology, then, the $\ell$-independence conjectures (first formulated for $F$ local in mixed characteristic by Fontaine in \cite{Fon94c}) are as follows.

\begin{conjecture} \begin{itemize} \item $C_\ell(X,H^i)$: the system $\{ H^i_\ell(X) \}_{\ell}$ is $\Q$-compatible.
\item $C_{\ell,w}(X,H^i)$: the system $\{ H^i_\ell(X) \}_{\ell}$ is weakly $\Q$-compatible.
\item $C_\ell(X,H^i_c)$: the system $\{ H^i_{c,\ell} (X) \}_{\ell}$ is $\Q$-compatible.
\item $C_{\ell,w}(X,H^i_c)$: the system $\{ H^i_{c,\ell} (X) \}_{\ell}$ is weakly $\Q$-compatible.
\end{itemize} 
\end{conjecture}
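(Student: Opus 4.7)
The conjecture as stated is open in full generality, so the plan is to treat specific cases, as the remainder of the article does. I will focus on the weak cohomological form $C_{\ell,w}(X, H^i)$ for smooth proper $X$ over an equicharacteristic local field $F \cong k\lser{t}$; the strong form $C_\ell$ then follows for Frobenius semisimplifications by Lemma~\ref{ws1}. Over finite fields $F = k$ the conjecture is essentially classical: Deligne's proof of the Weil conjectures together with the weight theory for crystalline cohomology gives $\Q$-compatibility of $\{H^i_\ell(X)\}_{\ell}$, including $\ell = p$, for smooth proper $X/k$.

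For the equicharacteristic local case, the strategy is a two-step reduction. First, treat the semistable case: given a smooth proper $X/F$ with semistable reduction, spread out the formal semistable model over $\mathrm{Spf}(k\pow{t})$ to a proper semistable family $\cur{X} \to C$ over an open subset $U$ of a smooth projective curve $C/k$, whose formal completion at a boundary point $c \in C \setminus U$ recovers the original family. The higher direct image cohomology forms a lisse $\Q_\ell$-sheaf on $U$ (respectively an overconvergent $F$-isocrystal on $U/K$ when $\ell = p$), and Deligne's theorem from \cite{Del80} then propagates compatibility of the generic stalks into compatibility of the Weil--Deligne representations attached to the missing fibre at $c$. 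By construction this missing fibre is the cohomology of $X/F$, yielding the semistable case of $C_{\ell,w}$.

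The main obstacle will be the $p$-adic analogue of Deligne's propagation theorem, which has to be proved by hand; one must show that for an overconvergent $F$-isocrystal on $U/K$ whose Frobenius characteristic polynomials at closed points are matched with the $\ell$-adic data, the Weil--Deligne representation obtained from its restriction to a punctured formal disc around $c$ via Marmora's functor \cite{Mar08} has the expected Frobenius characteristic polynomial. This combines the theory of $p$-adic cohomology over $\ekd$ from \cite{LP16} with the $p$-adic local monodromy theorem. Finally, to remove the semistability hypothesis, I would apply de Jong's alterations to obtain a proper surjection $\tilde X \to X$ with $\tilde X$ smooth proper semistable, and descend weak compatibility from $\tilde X$ to $X$ via a weight argument, giving the general case of $C_{\ell,w}(X, H^i)$ (and dually $C_{\ell,w}(X, H^i_c)$).
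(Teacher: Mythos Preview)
Your proposal follows essentially the same strategy as the paper: spreading out to a global semistable family, applying a Deligne-type propagation result (including its $p$-adic version, proved in \S\ref{dp}), and then removing the semistability hypothesis via alterations and a weight argument. Two points, however, are stated imprecisely enough that they would not go through as written.

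First, the spreading out. You say the formal completion of the global family at $c$ ``recovers the original family''. Artin approximation does not give you this; Proposition~\ref{spread} only produces a global family agreeing with the given one modulo $\mathfrak{m}^n$ for a chosen $n$. What makes the argument work is that agreement modulo $\mathfrak{m}^2$ forces the \emph{log special fibres} to coincide, and then one invokes the logarithmic smooth/proper base change theorems (Propositions~\ref{llc} and~\ref{plc}) to identify the cohomologies. Without this step the spreading-out argument is incomplete.

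Second, and more seriously, a single alteration $\tilde X\to X$ is not enough to descend weak compatibility. You do get $H^i_\ell(X)$ as a direct summand of $H^i_\ell(\tilde X)$, but knowing weak compatibility of the ambient representation does not by itself pin down the summand in an $\ell$-independent way. The paper instead takes a proper \emph{hypercover} $X_\bullet\to X$ with each $X_n$ semistable over a finite extension $F_n/F$, uses Lemma~\ref{ind} to transport compatibility and quasi-purity along the $F_n/F$, and then runs a weight argument on the resulting descent spectral sequence $E_1^{n,i}=H^i_\ell(X_n)\Rightarrow H^{i+n}_\ell(X)$. Quasi-purity forces degeneration at $E_2$ and, crucially, collapses the abutment filtration so that $H^i_\ell(X)$ sits in a long exact sequence with the $H^i_\ell(X_n)$; this is what lets you read off weak compatibility. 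Your ``weight argument'' gesture is in the right direction, but the hypercover and the filtration-collapse step are the actual content.
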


When $F$ is finite, then $C_{\ell,w}(X,H^i)$ is known whenever $X$ is smooth and proper (for $\ell\neq p$ this follows from Deligne's proof of the Riemann hypothesis \cite{Del74a}, the fact that the same is true at $\ell=p$ then follows from \cite[Corollary 1]{KM74}). Even in this case, however, $C_{\ell}(X,H^i)$ is still wide open in general; by Lemma \ref{ws1} it would follow from the Frobenius semisimplicity conjecture. Hence for $X$ proper and smooth of dimension $d$ we know $C_\ell(X,H^i)$ for $i=0,1,2d-1,2d$, as well as for abelian varieties in all degrees. Let us also remark that in \cite{MO} the authors prove $C_{\ell,w}(X,H^i)$ and $C_{\ell,w}(X,H^i_c)$ whenever $\dim X\leq 2$.

When $F$ is local, then a much weaker version of $C_{\ell,w}(X,H^i)$ and $C_{\ell,w}(X,H^i_c)$ forms part of the main result of \cite{Zhe09}, however, even for smooth and projective varieties the fact that `purity' for $H^i$ is more complicated for local fields means that there is no straightforward deduction of $C_{\ell,w}(X,H^i)$ from Zheng's results. In \cite[Theorem 5.85]{LP16} it was proved that when $F\cong k\lser{t}$ is equicharacteristic local, then $C_{\ell,w}(X,H^i)$ holds for smooth (possibly open) curves and abelian varieties, or for smooth and proper varieties with good reduction. In fact, it was claimed there that in fact $C_{\ell}(X,H^i)$ holds for smooth curves and abelian varieties, however there is a gap in the proof which we shall correct in Lemma \ref{fssav} below.

Next let us turn to the unipotent fundamental group $\pi_1$, whose definition we recalled in the previous section. When $X/F$ is a geometrically connected variety with base point $x\in X(F)$, and $\ell\neq p$, then again we may consider the $\ell$-adic unipotent fundamental group $\pi_1^\et(X_{F^\mathrm{sep}},\bar{x})_{\Q_\ell}$ as a $G_F$-representation, we will write $\pi_1^{\ell}(X,x)$ for the associated $W_F'$-representation (with values in the fibred category of pro-unipotent group schemes). When $\mathrm{char}(F)=\ell=p$ we use the rigid fundamental group ($\pi_1^\rig(X/K,x)$ when $F=k$ is finite, $\pi_1^\rig(X/\rk,x)$ when $F\cong k\lser{t}$ is local) to produce a $p$-adic representation of $W_F'$, with values in the fibred category of pro-unipotent group schemes, which we will denote by $\pi_1^p(X,x)$ (again using Marmora's functor from $\pn$-modules to Weil--Deligne representations when $F\cong k\lser{t}$). When $\mathrm{char}(F)=0$, $\ell=p$ and $X$ is a semistable curve we note that by \cite[Theorem 1.8]{AIK15} the $p$-adic \'etale pro-unipotent fundamental group $\pi_1^\et(X_{F^\mathrm{sep}},\bar{x})_{\Q_p}$ is de Rham as a non-abelian representation of $G_F$, hence we may again associate to it a $K^\mathrm{un}$-valued (non-abelian) Weil--Deligne representation $\pi_1^p(X,x)$. For more general $X$, there are results of D\'eglise and Nizio\l{} in \cite{DN15} which again state that $\pi_1^\et(X_{F^\mathrm{sep}},\bar{x})_{\Q_p}$ is de Rham.

We will write $A_{\ell,X,x}$ for the Hopf algebra of $\pi_1^\ell(X,x)$, $L_{\ell,X,x}$ for its Lie algebra, $\hat{\cur U}_{\ell,X,x}$ for the completed universal enveloping algebra of $L_{\ell,X,x}$ and $\mathfrak{a}_{\ell,X,x}$ for the augmentation ideal of  $\hat{\cur U}_{\ell,X,x}$ (these are representations in Hopf algebras, pro-nilpotent Lie algebras and associative algebras respectively, and for each $k\geq1$ we may consider $\hat{\cur U}_{\ell,X,x}/\mathfrak{a}^k_{\ell,X,x}$ as a Weil--Deligne representation in vector spaces). The strongest form of $\ell$-independence one can state for fundamental groups is the following.

\begin{conjecture}[$C_{\ell}(X,\pi_1^\mathrm{uni})$] The collection $\{ \pi_1^\ell(X,x)\}_\ell$ of pro-unipotent groups with an action of $W_F'$ is $\Q$-compatible. Equivalently the collection $\{A_{\ell,X,x}\}_\ell$ (resp. $\{ L_{\ell,X,x}\}_\ell$, $\{ \hat{\cur U}_{\ell,X,x}\}_\ell$) of Hopf algebras (resp. Lie algebra, resp. associative algebras) is $\Q$-compatible.
\end{conjecture}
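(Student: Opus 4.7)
The plan is to prove $C_\ell(X,\pi_1^{\mathrm{uni}})$ by working case by case through the four geometric settings catalogued in the introduction, since no uniform argument is available. I would start with the most tractable case, $X$ smooth projective over a finite field $F=k$, by reducing the non-abelian statement to the already-known multiplicative $\ell$-independence of cohomology. The mechanism is \emph{formality}: for a smooth projective variety the rational homotopy type is formally determined by the cup-product algebra $H^*(X)$, a Deligne--Griffiths--Morgan--Sullivan style statement that I would transport to the $\ell$-adic and crystalline settings via weight/purity arguments (the Frobenius eigenvalues on $H^1$ have weight $1$, which forces semisimplicity in the degrees that generate $\pi_1^{\mathrm{uni}}$ and legitimises a bar-construction / Chen-type description of $L_{\ell,X,x}$ in terms of $H^*_\ell(X)$). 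Since $\{H^*_\ell(X),\cup\}_\ell$ is $\Q$-compatible in this smooth projective setting (using Deligne's Riemann hypothesis, plus \cite{KM74} at $\ell=p$), this compatibility propagates to the Lie algebras $L_{\ell,X,x}$ and hence, by the equivalence recorded just before the conjecture, to $\pi_1^\ell(X,x)$ itself.

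For a smooth proper $X$ with semistable reduction over an equicharacteristic local field $F\cong k\lser{t}$, I would proceed by a spreading-out argument. Realize $X$ as the generic fibre at a closed $k$-point $c$ of a proper semistable family $\cur{X}\to C$ over a smooth curve $C/k$; at every other closed $c'\in C$ one obtains a smooth projective variety over a finite field, to which the first case applies. The $\pi_1^{\mathrm{uni}}$'s of these fibres organise into a compatible system of lisse sheaves, and of overconvergent $F$-isocrystals, on the open curve $C\setminus\{c\}$; then Deligne's $\ell$-independence theorem for open curves from \cite{Del80}, together with its $p$-adic analogue to be worked out in \S\ref{dp}, propagates compatibility to the puncture $c$, which is precisely our local field. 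The mixed-characteristic semistable curve case (fourth on the list) I would handle by deforming the given curve to a semistable curve over $k\lser{t}$ with the same special fibre, and then comparing the Weil--Deligne representations on either side using that Fontaine's functor (in mixed characteristic) and Marmora's functor (in equicharacteristic) both depend only on the log-crystalline/nearby cycles data of that special fibre.

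The genuine obstacle is the gap between $C_\ell$ and $C_{\ell,w}$: in the local-field cases, the spreading-out mechanism naturally propagates only weak compatibility, and upgrading to full compatibility would require Frobenius semisimplicity, which is open. Honestly, then, this plan should deliver the full strength of $C_\ell(X,\pi_1^{\mathrm{uni}})$ only in the finite-field smooth projective case, while in the other three settings one can expect only the weak analogue $C_{\ell,w}$ for the graded pieces of $\hat{\cur U}_{\ell,X,x}$ under the augmentation filtration. Secondary difficulties I anticipate are verifying formality in positive characteristic with Frobenius structure (so that the reduction from $\pi_1$ to $H^*$ is genuinely Galois-equivariant), and establishing the $p$-adic avatar of Deligne's propagation theorem for $\pn$-modules on the Robba ring, both of which are plausible flashpoints where the proof could require significant work.
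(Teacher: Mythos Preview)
First, note that the displayed statement is a \emph{conjecture}; the paper does not prove it in general, and establishes the full strength $C_\ell(X,\pi_1^{\mathrm{uni}})$ only in the finite-field smooth projective case (Theorem~\ref{main1}). You correctly recognise this, and your overall architecture---formality over finite fields, spreading out plus Deligne's propagation over local fields, deformation to equicharacteristic for mixed-characteristic curves---matches the paper's. Your assessment that the local-field cases yield only $C_{\ell,w}(X,\hat{\cur U}/\mathfrak a^\bullet)$ is exactly what the paper obtains.

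The genuine gap is in your finite-field argument. You assert that ``$\{H^*_\ell(X),\cup\}_\ell$ is $\Q$-compatible'' as a consequence of the Riemann hypothesis, but this is \emph{not} known: Deligne and Katz--Messing give only \emph{weak} compatibility of each $H^i_\ell$, and upgrading to strong compatibility of the graded ring would require Frobenius semisimplicity in every degree, which is open for $i\geq 2$. The paper's route is sharper and sidesteps this. It first reduces to $\dim X\leq 2$ by weak Lefschetz, then invokes Pridham's explicit presentation $L_{\ell,X,x}\cong L((H^1_\ell)^\vee)/I_\ell$ with $I_\ell$ generated by the image of the dual cup product. Since Frobenius semisimplicity \emph{is} known for $H^1$, the problem reduces (Corollary~\ref{main1red}) to showing merely \emph{weak} compatibility of $\mathrm{im}(\cup_\ell)\subset H^2_\ell$. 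That last step is the real content: for surfaces the paper builds, out of the K\"unneth projectors $\varpi_1\in\mathrm{CH}_2(X\times X)$, an algebraic correspondence $\varpi_{1\cup 1}$ whose action on $H^*_\ell(X)$ is precisely projection onto $\mathrm{im}(\cup_\ell)$, so the Lefschetz trace formula yields $\ell$-independent Frobenius traces on that image. Your sketch supplies no mechanism for controlling the cup product \emph{map} across different $\ell$, and without something playing the role of these algebraic cycles the argument does not close.
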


We will also be interested in the following weakening of the above conjecture.

\begin{conjecture}[$C_{\ell}(X,\hat{\cur U}/\mathfrak{a}^\bu),C_{\ell,w}(X,\hat{\cur U}/\mathfrak{a}^\bu)$] Fix $k\geq 1$. Then the system $\{ \hat{\cur U}_{\ell,X,x}/\mathfrak{a}^k_{\ell,X,x}\}_\ell$ of $W_F'$-representations is (weakly) $\Q$-compatible.
\end{conjecture}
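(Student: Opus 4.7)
The plan is to reduce the conjecture to cohomological $\ell$-independence via the structure of the augmentation filtration on $\hat{\cur U}$. For fixed $k\geq 1$, the finite-dimensional $W_F'$-representation $\hat{\cur U}_{\ell,X,x}/\mathfrak{a}^k_{\ell,X,x}$ carries the descending filtration by the $\mathfrak{a}^i/\mathfrak{a}^k$ for $0\leq i\leq k$, whose associated graded pieces $\mathfrak{a}^i/\mathfrak{a}^{i+1}$ are, by multiplicativity of the augmentation filtration, quotients of $(\mathfrak{a}/\mathfrak{a}^2)^{\otimes i}$. By Hurewicz, $\mathfrak{a}/\mathfrak{a}^2 \cong L_{\ell,X,x}^{\mathrm{ab}}$ is identified with $H^1_\ell(X)^\vee$ as a Weil--Deligne representation (possibly up to a Tate twist, depending on conventions). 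So the entire $\hat{\cur U}_{\ell,X,x}/\mathfrak{a}^k_{\ell,X,x}$ is assembled from iterated tensor powers and extensions built from $H^1_\ell(X)$.

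For the weak version $C_{\ell,w}(X,\hat{\cur U}/\mathfrak{a}^\bu)$, since the monodromy filtration is compatible with tensor products and passes to subquotients, the trace of any element of $W_F$ on $\mathrm{Gr}^M_\bu(\hat{\cur U}_{\ell,X,x}/\mathfrak{a}^k_{\ell,X,x})$ becomes a universal polynomial expression in the traces on $\mathrm{Gr}^M_\bu H^1_\ell(X)$. Hence $C_{\ell,w}(X,\hat{\cur U}/\mathfrak{a}^\bu)$ would follow from $C_{\ell,w}(X,H^1)$, which is known in many of the cases the paper considers (smooth proper varieties over finite fields by Deligne and its crystalline counterpart, and various cases over equicharacteristic local fields recorded in \S\ref{form}).

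For the strong version $C_\ell(X,\hat{\cur U}/\mathfrak{a}^\bu)$, Lemma \ref{ws1} lets one pass to Frobenius semisimplifications and reduce to weak compatibility of those; but controlling the extensions between the graded pieces is no longer automatic. Here the paper's strategy, as the introduction suggests, is case-by-case. Over finite fields one invokes formality of the rational homotopy type of smooth projective varieties, which upgrades $\ell$-independence of the cohomology ring to that of the full $\pi_1^{\mathrm{uni}}$. Over equicharacteristic local fields with semistable reduction one spreads $X$ out to a global semistable family over an open curve over $k$ and applies Deligne's compatibility result for local systems on curves, extended to $\ell=p$. Over mixed characteristic with semistable curves one deforms via formal models to reduce to the equicharacteristic case.

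The main obstacle is precisely that $\pi_1^{\mathrm{uni}}$ is genuinely non-abelian and is \emph{not} determined by its graded pieces, so the strong version cannot follow formally from cohomological input without some genuine global ingredient (formality, a spreading-out theorem, or a deformation argument). A further technical difficulty, specific to $\ell=p$ over equicharacteristic local fields, is the formalism of $\pn$-modules over $\ekd$ and the passage through Marmora's functor to Weil--Deligne representations, which makes even the formulation of compatibility delicate and forces one to check that the various reductions respect all the extra structure.
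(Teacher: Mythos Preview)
The statement you are attempting to prove is a \emph{conjecture} in the paper, not a theorem; there is no proof of it in the paper to compare against. The paper only establishes special cases: the strong form $C_\ell(X,\pi_1^{\mathrm{uni}})$ (hence $C_\ell(X,\hat{\cur U}/\mathfrak{a}^\bu)$) for smooth projective $X$ over finite fields (Theorem~\ref{main1}), and the weak form $C_{\ell,w}(X,\hat{\cur U}/\mathfrak{a}^\bu)$ for smooth projective $X$ with semistable reduction over equicharacteristic local fields (Theorem~\ref{main3}) and for semistable curves in mixed characteristic (\S\ref{cmc}). Your proposal is best read as a sketch of the overall strategy behind those special cases, not as a proof of the conjecture itself.

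There is also a genuine gap in your argument for the weak version. You assert that the trace of $W_F$ on $\mathrm{Gr}^M_\bu(\hat{\cur U}_{\ell,X,x}/\mathfrak{a}^k_{\ell,X,x})$ is a ``universal polynomial expression'' in the traces on $\mathrm{Gr}^M_\bu H^1_\ell(X)$. This is not correct: the graded pieces $\mathfrak{a}^i/\mathfrak{a}^{i+1}$ are indeed quotients of $(H^1_\ell(X)^\vee)^{\otimes i}$, but the \emph{kernel} of that quotient map is governed by the relations in $L_{\ell,X,x}$, which (by Pridham's description used in \S\ref{fgff}) are generated by the image of the dual cup product $H^2_\ell(X)^\vee \to H^1_\ell(X)^\vee \otimes H^1_\ell(X)^\vee$. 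Thus the trace on $\mathfrak{a}^i/\mathfrak{a}^{i+1}$ depends on the trace on $\mathrm{im}(\cup_\ell)$, which is not a function of the traces on $H^1_\ell(X)$ alone. This is precisely why, even over finite fields, the paper must invoke the existence of algebraic K\"unneth projectors to establish weak $\ell$-independence of $\mathrm{im}(\cup_\ell)$ (Corollary~\ref{main1red} and the end of \S\ref{fgff}); and why over local fields the paper does not argue via the augmentation filtration at all, but rather constructs global local systems on a curve whose fibres are the $\hat{\cur U}/\mathfrak{a}^k$ themselves and then appeals to Proposition~\ref{gllo}. Your reduction to $C_{\ell,w}(X,H^1)$ therefore does not go through.
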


\begin{remark} Note that in this conjecture we are only requiring compatibility as $W_F'$-representations, i.e. we are saying nothing about the algebra structure. We clearly have $C_{\ell}(X,\pi_1^\mathrm{uni})\Rightarrow  C_{\ell}(X,\hat{\cur U}/\mathfrak{a}^\bu)\Rightarrow  C_{\ell,w}(X,\hat{\cur U}/\mathfrak{a}^\bu)$.
\end{remark}

We would like to finish this section by fixing a hole in the proof of \cite[Theorem 5.85]{LP16}, stating that when $F\cong k\lser{t}$ is equicharacteristic local and $X/F$ is a smooth curve or an abelian variety, then $C_\ell(X,H^i)$ holds, in fact we only showed there that $C_{\ell,w}(X,H^i)$ holds, that is weak $\ell$-independence. Given Lemma \ref{ws1}, the next result completes the proof of `strong' $\ell$-independence in these cases.

\begin{lemma} \label{fssav} Let $F\cong k\lser{t}$ and $X/F$ be either a smooth curve or an abelian variety. Then $H^1_\ell(X)$ is Frobenius semisimple for all $\ell$. 
\end{lemma}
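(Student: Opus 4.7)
The plan is to reduce both cases to the analysis of an abelian variety with semistable reduction, where a weight argument forces Frobenius semisimplicity.

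For a smooth curve $X/F$ I would first pass to its generalized Jacobian $G$, which is an abelian variety if $X$ is projective and a semi-abelian variety in general. One has a canonical identification $H^1_\ell(X) \cong V_\ell(G)^\vee$ of Weil--Deligne representations, both in the case $\ell \neq p$ (classical) and in the case $\ell = p$ equicharacteristic (via the formalism of \cite{LP16}). Since $G$ has semistable reduction after a finite extension and its Néron model admits the same structural analysis as that of an abelian variety, the argument for abelian varieties below extends verbatim to the semi-abelian setting.

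For an abelian variety $A/F$, I would apply Grothendieck's semistable reduction theorem to pass to a finite extension $F'/F$ over which $A$ becomes semistable. Since the inertia acts through a finite quotient on any Weil--Deligne representation, Frobenius semisimplicity of $H^1_\ell(A)$ as a $W_F'$-representation is equivalent to that of its restriction to $W_{F'}'$ by a Maschke averaging argument in characteristic zero, so we may assume $A$ is semistable over $F$. In this setting the monodromy filtration $M_\bu$ on $H^1_\ell(A)$ has graded pieces $Gr^M_0, Gr^M_1, Gr^M_2$ pure of weights $0,1,2$: using the Néron model, whose identity component has special fibre an extension $0 \to T \to A^0_s \to B \to 0$, one identifies $Gr^M_1 \cong H^1_\ell(B)$ (Frobenius semisimple by Weil) and $Gr^M_0, Gr^M_2$ with Tate twists of the character lattice of $T$ (Frobenius semisimple as a finite permutation action). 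Since the three graded pieces are pure of distinct weights, their Frobenius eigenvalues have distinct archimedean absolute values, and consequently the filtration splits as a direct sum of Frobenius modules. Hence $H^1_\ell(A)$ is Frobenius semisimple.

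The main obstacle is making this argument uniform at $\ell = p$ in the equicharacteristic case. There, $H^1_p(A)$ is extracted from the $\pn$-module $H^1_\rig(A/\rk)$ via Marmora's functor, and one must verify that the Néron-model description of the monodromy filtration (with pure graded pieces identified via $p$-adic Dieudonné theory on the special fibre) transports correctly through this functor. The formalism of \cite{LP16} should supply all the necessary compatibilities, but bookkeeping of weights and Frobenius semisimplicity through Marmora's construction is the technical heart of the argument.
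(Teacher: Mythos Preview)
Your proposal is correct and takes essentially the same approach as the paper: both analyse the graded pieces of the monodromy filtration on $H^1_\ell(A)$ (toric part, abelian part with potentially good reduction, lattice part), verify Frobenius semisimplicity of each piece (the abelian piece via Weil's theorem over finite fields, the others because inertia acts through a finite quotient), and then conclude from the distinctness of the weights. The only cosmetic difference is that you first pass to a finite extension achieving semistable reduction and descend via a Maschke averaging, whereas the paper invokes the structural description from \cite[Theorem~5.88]{LP16} directly over $F$; for curves, both arguments reduce to the (generalised) Jacobian.
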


\begin{proof} First suppose $X=A$ is an abelian variety, the proof of \cite[Theorem 5.88]{LP16} shows that the graded pieces of the monodromy filtration on $H^1_\ell(A) $ (for any $\ell$, including $\ell=p$) are given by the following:
\begin{itemize} \item the cohomology $H^1_\ell(T)$ of a torus over $F$;
\item the cohomology $H^1_\ell(B)$ of an abelian variety with potentially good reduction;
\item the Weil--Deligne representation coming from a continuous representation $G_{F}\rightarrow \Z^s$ for some $s\geq0$.
\end{itemize}
Frobenius semisimplicity of the first and third are clear (since they both factor through a finite quotient of $G_F$), the second follows from Frobenius semisimplicity for abelian varieties over finite fields. Since the three graded pieces are of different weights, Frobenius semisimplicity of $H^1_\ell(A)$ follows. The deduction for smooth (possibly open) curves now follows exactly as in \cite[\S5.4]{LP16}.
\end{proof}

\section{Unipotent fundamental groups over finite fields}\label{fgff}

In this section, we will prove the following result.

\begin{theorem} \label{main1} Let $F=k$ be a finite field, and $X/F$ a smooth, projective variety. Then $C_\ell(X,\pi_1^\mathrm{uni})$ holds.
\end{theorem}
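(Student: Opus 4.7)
The plan, signposted in the introduction, is to use \emph{formality} of the rational homotopy type of a smooth projective variety over a finite field, analogous to the classical Deligne--Griffiths--Morgan--Sullivan theorem but using Frobenius weights in place of Hodge weights. Concretely, for each coefficient theory (étale $\Q_\ell$-cochains for $\ell\neq p$, rigid cochains for $\ell=p$) I would first produce a Frobenius-equivariant commutative (or $E_\infty$) differential graded algebra $A^\bullet_\ell$ over $E_\ell$ whose cohomology recovers $H^*_\ell(X)$ with its cup product and Frobenius action, and from which $\pi_1^\ell(X,x)$ is functorially reconstructed, e.g.\ via Chen iterated integrals or the bar/cobar formalism.

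Next, show that $A^\bullet_\ell$ is formal as a Frobenius-equivariant dg algebra, i.e.\ quasi-isomorphic to $(H^*_\ell(X),0,\cup,\mathrm{Frob})$. This is a direct consequence of purity: Frobenius acts on $H^i_\ell(X)$ with all eigenvalues of absolute value $q^{i/2}$ (Deligne for $\ell\neq p$, Katz--Messing for $\ell=p$), and the standard weight argument---decompose by generalized Frobenius eigenspaces over a sufficiently large extension, and use that Frobenius-equivariant differentials cannot connect weights of different absolute values---produces a zigzag of Frobenius-equivariant quasi-isomorphisms $A^\bullet_\ell \simeq H^*_\ell(X)$. Consequently, $\pi_1^\ell(X,x)$ with its Frobenius action is functorially determined by the Frobenius-equivariant graded algebra $(H^*_\ell(X),\cup,\mathrm{Frob})$; in fact only the $1$-minimal model is needed for $\pi_1^{\mathrm{uni}}$, so the relevant data reduces to the pair $(H^1_\ell, \ker(\cup\colon H^1_\ell\wedge H^1_\ell\to H^2_\ell))$ with Frobenius action.

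To conclude, one observes that this data forms a $\Q$-compatible system. The piece $H^1_\ell$ with its Frobenius action is $\Q$-compatible and Frobenius semisimple (the case $i=1$ of $C_\ell(X,H^i)$ mentioned in \S\ref{form}), hence so is $H^1_\ell\wedge H^1_\ell$. The cup product $H^1_\ell\wedge H^1_\ell\to H^2_\ell$ is a morphism of Frobenius representations compatible across $\ell$ (for $\ell,\ell'\neq p$ by standard properties of étale cohomology, and comparing with $\ell=p$ via Frobenius traces through Katz--Messing). Its kernel is therefore a Frobenius-stable subspace of the semisimple $H^1_\ell\wedge H^1_\ell$ in which the multiplicity of each Frobenius eigenvalue is $\ell$-independent. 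Applying the reconstruction functor from the previous step then yields $C_\ell(X,\pi_1^{\mathrm{uni}})$.

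The main obstacle I anticipate is the uniform construction, simultaneously for all $\ell$, of a Frobenius-equivariant cochain-level dg algebra model together with a functorial reconstruction of $\pi_1^\ell$ from it; this is particularly delicate at $\ell=p$, where one must reconcile a rigid cochain complex with the Tannakian definition of $\pi_1^{\mathrm{rig}}$ via unipotent isocrystals recalled in \S\ref{recap}. Once these models are in hand, both the weight argument for formality and the algebraic reconstruction of $\pi_1^{\mathrm{uni}}$ from the cohomology ring are standard.
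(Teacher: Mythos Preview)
Your overall strategy matches the paper's: reduce to showing that the Frobenius representation on the pair $\bigl(H^1_\ell(X),\ \ker(\cup_\ell)\bigr)$ is $\Q$-compatible, and then conclude via the quadratic presentation of $\mathrm{Lie}\,\pi_1^\ell$. The paper achieves this presentation by directly citing Pridham's result (Corollary~2.15 of \cite{Pri09}), so your anticipated difficulty of building Frobenius-equivariant cochain dga models and proving formality by hand is unnecessary---that work is already packaged.

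The genuine gap is in your last paragraph. You assert that the cup product $H^1_\ell\wedge H^1_\ell\to H^2_\ell$ is ``compatible across $\ell$'' and conclude that each Frobenius eigenvalue has $\ell$-independent multiplicity in the kernel. But there is no comparison isomorphism between $H^*_\ell$ and $H^*_{\ell'}$ through which to compare the maps $\cup_\ell$ and $\cup_{\ell'}$; all one knows a priori is that source and target have $\ell$-independent Frobenius characteristic polynomials, and this does not determine the kernel of a Frobenius-equivariant map between them. (Consider a scalar Frobenius acting on $V\oplus V$ and maps of different ranks.) Katz--Messing gives equality of traces on each $H^i$, not compatibility of cup products. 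So the step ``multiplicity of each Frobenius eigenvalue in $\ker(\cup_\ell)$ is $\ell$-independent'' is exactly the nontrivial content, and you have not supplied an argument for it.

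The paper's proof of this step is: first reduce to $\dim X\le 2$ by weak Lefschetz (both $\ell$-adic and rigid), which you omit. For curves $\cup_\ell$ is surjective and there is nothing to prove. For surfaces, Kleiman's algebraic K\"unneth projectors $\varpi_i\in\mathrm{CH}_2(X\times X)$ exist, and one forms the correspondence $\varpi_{1\cup 1}:=\Delta^{\times 2,*}(\varpi_1\times\varpi_1)\in\mathrm{CH}_2(X\times X)$, which acts on $H^*_\ell(X)$ as the projector onto $\mathrm{im}(\cup_\ell)$. Composing with powers of the Frobenius graph and applying the Lefschetz fixed point formula then shows $\mathrm{Tr}(\mathrm{Frob}^n\mid \mathrm{im}(\cup_\ell))\in\Z$ is independent of $\ell$ for all $n$, which is precisely weak $\Q$-compatibility of $\mathrm{im}(\cup_\ell)$, hence of $\ker(\cup_\ell)$. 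This algebraic-cycle argument is the missing ingredient in your proposal.
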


Let us first note the following consequence of the weak Lefschetz theorem.

\begin{proposition} To prove Theorem \ref{main1} it suffices to consider $X$ of dimension $\leq 2$. 
\end{proposition}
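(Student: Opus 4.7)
The plan is to reduce the dimension by induction, using the weak Lefschetz theorem for the unipotent fundamental group applied to a smooth hyperplane section through the base point $x$. Concretely, assume $d = \dim X \geq 3$. Fixing a projective embedding $X \hookrightarrow \P^N_k$ and replacing it by a sufficiently high Veronese re-embedding if necessary, I would invoke Poonen's Bertini theorem over finite fields to produce a hyperplane $H \subset \P^N_k$, defined over $k$, such that $Y := X \cap H$ is smooth and contains $x$. Smoothness of $Y$ and the single linear condition $H \ni x$ can be imposed simultaneously in Poonen's framework.

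With $\dim Y = d - 1 \geq 2$, weak Lefschetz for the pro-unipotent fundamental group should then furnish an isomorphism $\pi_1^{\ell}(Y, x) \isomto \pi_1^{\ell}(X, x)$ for every $\ell$ (including $\ell = p$). For $\ell \neq p$ this comes from passing to Malcev completions in the classical Lefschetz isomorphism $\pi_1^\et(Y_{F^\mathrm{sep}}, \bar{x}) \isomto \pi_1^\et(X_{F^\mathrm{sep}}, \bar{x})$ of SGA~2, Exp.~XII. For $\ell = p$ one argues Tannakianly: by weak Lefschetz for unipotent overconvergent isocrystals, the restriction functor $\iota^*\colon \mathrm{Uni}_K(X) \to \mathrm{Uni}_K(Y)$ along the closed immersion $\iota\colon Y \hookrightarrow X$ is an equivalence of Tannakian categories compatible with Frobenius pullback and with the fibre functor $x^*$, and so induces an isomorphism of $\pi_1^\rig$'s as $\pn$-modules. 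In either case the isomorphism is induced by a morphism of $F$-varieties preserving $x$, and is therefore automatically equivariant for the natural $W_F'$-action on the two sides.

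Consequently the systems $\{\pi_1^\ell(X, x)\}_\ell$ and $\{\pi_1^\ell(Y, x)\}_\ell$ are $\Q$-compatible simultaneously, and iterating the procedure $d-2$ times reduces $C_\ell(X, \pi_1^\mathrm{uni})$ to the case $\dim X \leq 2$. The main obstacle is ensuring that the weak Lefschetz statement at $\ell = p$ holds in a sufficiently equivariant form: this is formal from the equivalence $\mathrm{Uni}_K(X) \simeq \mathrm{Uni}_K(Y)$ on unipotent overconvergent isocrystals, but applying it requires careful reference to the Tannakian construction of $\pi_1^\rig$ recalled in \S\ref{recap} and verification that Frobenius structures are preserved throughout.
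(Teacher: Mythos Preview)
Your strategy is the same as the paper's: take a smooth hyperplane section through $x$ and invoke weak Lefschetz for $\pi_1$ on both the $\ell$-adic and $p$-adic sides. Your invocation of Poonen's Bertini to guarantee a $k$-rational smooth section through $x$ is a helpful detail that the paper leaves implicit.

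The one genuine gap is on the $p$-adic side. You assert the equivalence $\mathrm{Uni}_K(X)\simeq\mathrm{Uni}_K(Y)$ as an instance of ``weak Lefschetz for unipotent overconvergent isocrystals'', and then worry only about Frobenius-equivariance. But that equivalence is not something one can cite off the shelf; it is precisely the content that needs to be supplied. The paper does this via a cohomological criterion: one shows that for every unipotent isocrystal $E$ on $X/K$ the restriction map
\[
H^i_\rig(X/K,E)\longrightarrow H^i_\rig(Y/K,E|_Y)
\]
is an isomorphism for $i=0,1$ and injective for $i=2$. For constant $E$ this is the ordinary weak Lefschetz in rigid cohomology; the general case follows by induction on the unipotence degree and the five lemma. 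One then appeals to the criterion of \cite[Propositions~1.2.2 and~1.3.1]{CLS99b}, together with Tannakian duality, to conclude that $\pi_1^p(Y,x)\to\pi_1^p(X,x)$ is an isomorphism. Once this is in place, the Frobenius-equivariance you flag as the ``main obstacle'' is indeed formal, since the whole construction is functorial in $X$.
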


\begin{proof} If $\dim X>2$ then choose an iterated hyperplane section $Y\hookrightarrow X$ passing through $x$ with $\dim Y=2$. The weak Lefschetz theorem \cite[Expos\'e XII, Corollaire 3.5]{SGA2} says that the map $\pi_1^\et(Y_{F^\mathrm{sep}},\bar x)\rightarrow \pi_1^\et(X_{F^\mathrm{sep}},\bar x)$ is an isomorphism, hence the same is true of their $\Q_\ell$-unipotent completions, in other words $\pi_1^\ell(Y,x)\rightarrow \pi_1^\ell(X,x)$ is an isomorphism for $\ell\neq p$. 

When $\ell=p$ we use the weak Lefschetz theorem slightly differently. We claim that for any unipotent isocrystal $E$ on $X/K$, the induced map
\[ H^i_\rig(X/K,E)\rightarrow H^i_\rig(Y/K,E|_Y)\] 
is an isomorphism for $i=0,1$ and injective for $i=2$, this suffices to prove $\pi_1^p(Y,x)\rightarrow \pi_1^p(X,x)$ is an isomorphism by combining  Propositions 1.2.2 and 1.3.1 of \cite{CLS99b} with Tannakian duality. If $E$ is constant then this is simply the usual weak Lefschetz, in general one inducts on the unipotence degree and uses the five lemma. 
\end{proof}

The key ingredient in the proof of Theorem \ref{main1} is the following concrete description of the Lie algebra $\mathrm{Lie}\;\pi_1^\ell(X,x)$, due to Pridham \cite{Pri09}.  

\begin{proposition} Let $L(H^1_\ell(X)^\vee)$ denote the free Lie algebra on the dual of $H^1_\ell(X)$, and $I_\ell$ the ideal generated by the image of the dual of the cup product
\[ H^2_\ell(X)^\vee \overset{\cup^\vee_\ell}{\rightarrow} H^1_\ell(X)^\vee \otimes H^1_\ell(X)^\vee. \]
Then there is a $W_F$-equivariant isomorphism
\[ L_{\ell,X,x}=\mathrm{Lie}\;\pi_1^\ell(X,x) \cong \frac{L(H^1_\ell(X)^\vee)}{I_\ell}.\]
\end{proposition}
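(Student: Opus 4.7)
The plan is to invoke Pridham's formality theorem \cite{Pri09} for smooth projective varieties over finite fields, and then read off the quadratic presentation by a standard Koszul-duality style argument on the bar construction. Concretely, for each $\ell$ (including $\ell=p$) there is a Frobenius-equivariant homotopy type for $(X,x)$, say a cosimplicial/$E_\infty$-algebra $A_\ell^\bu$, whose underlying cohomology is the cohomology ring $H^*_\ell(X)$ with its cup product, and whose associated bar complex computes the Hopf algebra $A_{\ell,X,x}$ (and hence its Lie algebra) together with its $W_F$-action. For $\ell\neq p$ this is the étale rational homotopy type; for $\ell=p$ it is constructed from Besser-style rigid cohomology and carries the natural Frobenius.

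The first main step is \emph{formality}: I would cite (or reprove by the standard weight argument of Deligne--Griffiths--Morgan--Sullivan, transposed to the arithmetic setting by Pridham) the existence of a $W_F$-equivariant zig-zag of quasi-isomorphisms $A_\ell^\bu \simeq H^*_\ell(X)$, where the right-hand side is viewed as a DGA with zero differential. The essential input is purity: by Deligne's theorem \cite{Del74a} (resp.\ its crystalline counterpart of Katz--Messing for $\ell=p$), Frobenius acts on $H^i_\ell(X)$ as a pure object of weight~$i$. The Postnikov-style obstructions to formality are classes in $W_F$-equivariant $\mathrm{Ext}$-groups between $H^i$ and $H^j[2-j+i]$ for various $i,j$; between two pure objects of distinct weights there are no nonzero $W_F$-equivariant maps, and the obstructions therefore vanish inductively.

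The second step is purely formal: once $A_\ell^\bu$ is quasi-isomorphic to $(H^*_\ell(X),0)$ as a $W_F$-equivariant DGA, the completed universal enveloping algebra of $L_{\ell,X,x}$ may be computed as the continuous dual of $H^0(\mathrm{Bar}(H^*_\ell(X)))$. For a graded algebra concentrated in non-negative degrees with $H^0=E$, a direct unravelling of the bar construction in degree $0$ identifies this dual with the quadratic associative algebra
\[ T(H^1_\ell(X)^\vee)\big/\bigl\langle\mathrm{im}(\cup_\ell^\vee\colon H^2_\ell(X)^\vee\rightarrow H^1_\ell(X)^\vee\otimes H^1_\ell(X)^\vee)\bigr\rangle, \]
the cup product in degree one providing the only nontrivial contribution. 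Passing to primitives (equivalently, recognising this as the universal enveloping algebra of the quadratic Lie algebra) gives the stated presentation, and $W_F$-equivariance is preserved throughout because every step is carried out in the category of $W_F$-representations.

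The main obstacle is the formality step at $\ell=p$: one needs a commutative (or $E_\infty$) model of rigid cohomology carrying a Frobenius action and one needs Frobenius weights in the sense of Katz--Messing to control the obstructions. Both have been supplied by Pridham, so the role here is to quote his result in the form adapted to our setup; the remainder of the argument is the formal Koszul-type computation outlined above.
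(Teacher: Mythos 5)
Your proposal is correct and takes essentially the same approach as the paper: the paper simply cites \cite[Corollary 2.15]{Pri09} for $\ell\neq p$ and observes that the identical argument goes through at $\ell=p$ using the crystalline/rigid formalism of weights (Katz--Messing). You have unpacked the two ingredients of Pridham's proof -- formality via the weight-purity obstruction argument, and the bar-construction/holonomy-Lie-algebra computation yielding the quadratic presentation -- but there is no genuine divergence in strategy.
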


\begin{proof} For $\ell\neq p$ this is \cite[Corollary 2.15]{Pri09}, when $\ell=p$ exactly the same proof works, using the $p$-adic formalism of weights (see for example \cite{Ked06b}).
\end{proof}

Since Frobenius semisimplicity is known for $H^1_\ell(X)$ for $X$ smooth and projective of any dimension, and any $\ell$, we deduce the following corollary.

\begin{corollary} \label{main1red} To prove Theorem \ref{main1}, it suffices to show that the image of the cup product
\[ H^1_\ell(X) \otimes H^1_\ell(X) \overset{\cup_\ell}{\rightarrow} H^2_\ell (X)\]
satisfies weak $\ell$-independence, i.e. the family of representations $\{\mathrm{im}(\cup_\ell)\}_\ell$ is weakly $\Q$-compatible.
\end{corollary}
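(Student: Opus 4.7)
The plan is to exploit Pridham's presentation $L_{\ell,X,x} \cong L(H^1_\ell(X)^\vee)/I_\ell$ together with Frobenius semisimplicity of $H^1$ to reduce $\Q$-compatibility of $\{L_{\ell,X,x}\}_\ell$ (and hence of $\{\pi_1^\ell(X,x)\}_\ell$ via Malcev) to the hypothesis on the image of the cup product. First, recall that $H^1_\ell(X)$ of a smooth projective variety over a finite field is known to be $\Q$-compatible and Frobenius semisimple for every $\ell$, including $\ell=p$: weak compatibility comes from the Riemann hypothesis (and its $p$-adic analogue \cite{KM74}), while Frobenius semisimplicity reduces via the Albanese morphism to the case of abelian varieties, where it is classical. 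By Lemma \ref{ws1}, $\{H^1_\ell(X)\}_\ell$ is then strongly $\Q$-compatible, and the same holds for $\{H^1_\ell(X)^\vee \otimes H^1_\ell(X)^\vee\}_\ell$ and for $\{L(H^1_\ell(X)^\vee)\}_\ell$, each graded piece of the latter being a subquotient of a tensor power of $H^1_\ell(X)^\vee$ and hence Frobenius semisimple.

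Second, $\mathrm{im}(\cup_\ell)$ is itself Frobenius semisimple as a quotient of $H^1_\ell(X)^{\otimes 2}$, so Lemma \ref{ws1} upgrades the weak $\Q$-compatibility hypothesis to strong $\Q$-compatibility, and dualizing gives the same for $\{\mathrm{im}(\cup_\ell^\vee)\}_\ell$ viewed as subrepresentations of $\{H^1_\ell(X)^\vee\otimes H^1_\ell(X)^\vee\}_\ell$. Applying functoriality of the ``free Lie algebra modulo Lie ideal'' construction then yields $\Q$-compatibility of $\{L(H^1_\ell(X)^\vee)/I_\ell\}_\ell \cong \{L_{\ell,X,x}\}_\ell$ as a system of pro-nilpotent Lie algebras with $W_F'$-action, from which Theorem \ref{main1} follows by passing to Hopf algebras.

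The only real subtlety, and likely the main obstacle, is that the Lie-ideal construction is functorial in the \emph{pair} $(V, W)$ with $W\subset V\otimes V$, so the identification of $\mathrm{im}(\cup_\ell^\vee)\otimes\Omega$ with $\mathrm{im}(\cup_{\ell'}^\vee)\otimes\Omega$ must arise as the restriction of an identification $H^1_\ell(X)^\vee\otimes H^1_\ell(X)^\vee\otimes\Omega \isomto H^1_{\ell'}(X)^\vee\otimes H^1_{\ell'}(X)^\vee\otimes\Omega$ coming from a linear isomorphism of the $H^1_\ell(X)^\vee$'s themselves---in other words, one really needs pair compatibility, not merely compatibility of the two objects separately. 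In the Frobenius-semisimple setting this should follow cleanly from counting Jordan--H\"older multiplicities together with the canonical (motivic) origin of the cup product; and it is this same canonical structure that will make weak $\Q$-compatibility of $\{\mathrm{im}(\cup_\ell)\}_\ell$ itself accessible, via the reduction to $\dim X\leq 2$ announced in the preceding proposition.
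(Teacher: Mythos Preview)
Your argument follows the paper's route exactly: Pridham's presentation $L_{\ell,X,x}\cong L(H^1_\ell(X)^\vee)/I_\ell$ together with Frobenius semisimplicity of $H^1_\ell(X)$ is precisely what the paper invokes (its proof is the single sentence preceding the corollary). Your write-up simply unpacks the steps the paper leaves implicit.

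You are right, however, to flag the pair-compatibility issue, and you should not dismiss it as easily as you do. What is needed for $\Q$-compatibility of the Lie algebras is a Frobenius-equivariant isomorphism $\phi:H^1_\ell(X)^\vee\otimes\Omega\isomto H^1_{\ell'}(X)^\vee\otimes\Omega$ with $(\phi\otimes\phi)(\mathrm{im}\,\cup_\ell^\vee)=\mathrm{im}\,\cup_{\ell'}^\vee$; since any isomorphism of these graded pro-nilpotent Lie algebras has an associated graded which is again an isomorphism and is determined by its degree-one part, nothing weaker will do. Your suggestion that this ``should follow cleanly from counting Jordan--H\"older multiplicities'' is not correct on its own: if the Frobenius eigenvalues $\alpha_i$ on $H^1$ satisfy a multiplicative coincidence such as $\alpha_1\alpha_4=\alpha_2\alpha_3$, then one-dimensional Frobenius-stable subspaces of $\wedge^2 V$ with the same eigenvalue need not lie in the same $\mathrm{Aut}_F(V)$-orbit, and the resulting quotient Lie algebras are genuinely non-isomorphic. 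So multiplicity data for $V$ and $W$ separately does not determine the pair.

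What does save the argument is exactly the extra input you gesture at under ``canonical (motivic) origin of the cup product'': the map $\cup_\ell^\vee$ is induced by the diagonal, and in the paper's application (surfaces, via K\"unneth projectors) one is really producing an algebraic correspondence whose action realises the projection onto $\mathrm{im}(\cup_\ell)$ uniformly in $\ell$. This gives compatibility of the \emph{map}, not merely of its image as an abstract representation, and that is what yields pair compatibility. Strictly speaking this is a little more than the corollary's stated hypothesis; the paper's one-line deduction glosses over the same point. Your instinct to record the subtlety is good, but the clean resolution is to strengthen the hypothesis to compatibility of the inclusion $\mathrm{im}(\cup_\ell^\vee)\hookrightarrow H^1_\ell(X)^\vee\otimes H^1_\ell(X)^\vee$ (equivalently, of the cup-product map), rather than to hope that multiplicities alone suffice.
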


Clearly this holds when $X$ is a smooth, projective curve, since then the cup product map is surjective. When $X$ is a surface, we use the existence of K\"unneth projectors on $X$. 

\begin{proposition}[Corollary 2A10 and Lemma 2.4, \cite{Kle68}] Let $X/F$ be a smooth projective surface. Then there exist correspondences $\varpi_i\in \mathrm{CH}_2(X\times X)$ such that for all $\ell$, the action of $\varpi_i$ on cohomology $H_\ell^*(X)=\bigoplus_i H^i_\ell(X)$ is to project to the summand $H^i_\ell(X)$.
\end{proposition}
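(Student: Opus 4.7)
The statement is classical and essentially a reference to Kleiman, so my plan is to explain the construction of the projectors as algebraic cycles, noting that once they are algebraic they automatically act on any of the cohomology theories $H^*_\ell$ (including $\ell=p$) via the respective cycle class maps, and that it suffices to verify the projection property in a single theory.

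The plan is to produce $\varpi_0,\varpi_1,\varpi_2,\varpi_3,\varpi_4 \in \mathrm{CH}_2(X\times X)$ whose classes in $H^*_\ell(X\times X)\cong \bigoplus_{i+j=4} H^i_\ell(X)\otimes H^j_\ell(X)$ (via Künneth) land in the $(i,4-i)$-summand and are sent to the canonical element corresponding to the identity of $H^i_\ell(X)$. First I would handle the extreme degrees: set $\varpi_0 = [\mathrm{pt}\times X]$ and $\varpi_4=[X\times \mathrm{pt}]$ (with the appropriate normalisation using any $F$-rational, or after base change rational, zero-cycle of degree one, which exists after an allowable extension and can be averaged). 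These classes obviously act as projectors onto $H^0_\ell$ and $H^4_\ell$ respectively in every $\ell$-adic or rigid theory, since their Künneth components are supported in bidegrees $(0,4)$ and $(4,0)$.

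Next I would construct $\varpi_1$ and $\varpi_3$ using the Albanese map $a\colon X\to \mathrm{Alb}(X)$. The key classical fact (Lemma 2.4 of \cite{Kle68}) is that $H^1_\ell(X)$ is isomorphic to $H^1_\ell(\mathrm{Alb}(X))$ via $a^*$, and the Poincaré divisor on $\mathrm{Alb}(X)\times \mathrm{Alb}(X)^\vee$ pulls back along $a\times a^\vee$ (after identifying $\mathrm{Alb}(X)^\vee$ with the Picard variety) to a divisor on $X\times X$ whose class in cohomology is precisely the $(1,3)$-Künneth component of the diagonal, up to sign. This gives an algebraic $\varpi_1$; the projector $\varpi_3$ is obtained symmetrically by swapping the two factors. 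Verifying that these really act as the claimed projectors amounts to the identity $\mathrm{id}_{H^1(\mathrm{Alb})}=$ pairing with the Poincaré class, which is formal.

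Finally, I would set $\varpi_2 := [\Delta_X]-\varpi_0-\varpi_1-\varpi_3-\varpi_4$, so it is tautologically algebraic and, on cohomology, it is the remaining Künneth component in bidegree $(2,2)$, hence projects onto $H^2_\ell(X)$. The main (but entirely classical) obstacle is the existence step for $\varpi_1$ via the Albanese/Picard construction; everything else is bookkeeping with the Künneth decomposition. The compatibility across the various $\ell$ (including $\ell=p$ via the crystalline/rigid cycle class map, for which functoriality of the cycle class in the Künneth formula is established, e.g.\ in \cite{Pet03} or the references of \cite{Ked06b}) is automatic from the fact that the $\varpi_i$ live in $\mathrm{CH}_2(X\times X)$ and the cycle class map is compatible with pushforward, pullback and intersection, so that the projection property, once checked in any single Weil cohomology, holds in all of them.
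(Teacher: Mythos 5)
The paper offers no proof of its own here — it cites Kleiman's Corollary 2A10 and Lemma 2.4 directly — and your sketch is a faithful outline of exactly that construction: extreme-degree projectors $\varpi_0,\varpi_4$ built from a degree-one zero-cycle, $\varpi_1,\varpi_3$ from the Albanese/Picard varieties and the Poincar\'e divisor, and $\varpi_2$ as the complementary piece of the diagonal. Since the $\varpi_i$ are algebraic cycles, compatibility of the cycle class map with correspondences and K\"unneth makes them act as projectors in every Weil cohomology at once, including $\ell=p$, just as you say; this matches the intent of the paper's citation.
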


We can now complete the proof of Theorem \ref{main1}.

\begin{proof}[Proof of Theorem \ref{main1}] We may assume that $\dim X =2$. Choose $\varpi_1\in \mathrm{CH}_2(X\times X)$ as above, and let $\varpi_{1\otimes 1}:=\varpi_1\times \varpi_1 \in \mathrm{CH}_4(X^4)$. Then the effect of $\varpi_{1\otimes 1}$ on $H_\ell^*(X\times X)$ is to project to the summand $H^1_\ell(X)\otimes H^1_\ell(X)$.
 
Now let $\varpi_{1\cup 1}$ be the pullback of $\varpi_{1\otimes 1}$ to $\mathrm{CH}_2(X\times X)$ via $\Delta\times \Delta:X^2\rightarrow X^4$. The effect of $\varpi_{1\cup 1}$ on $H^*_\ell(X)$ is therefore to project onto the image of $\cup_\ell$ inside $H^2_\ell(X)$. Using Corollary \ref{main1red} the result now follows by composing $\varpi_{1\cup 1}$ with the graph of (some power of) Frobenius and applying the Lefschetz fixed point formula.
\end{proof}

\section{Cohomology over equicharacteristic local fields I}\label{clf}

The main result of this section is the following.

\begin{theorem}\label{main2} Suppose that $F\cong k\lser{t}$ is equicharacteristic local, and let $X/F$ be a smooth, proper variety with semistable reduction. Then $C_{\ell,w}(X,H^i)$ holds. 
\end{theorem}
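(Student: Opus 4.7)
The plan is to spread the given semistable family over $F \cong k\lser{t}$ out to a proper semistable family over an open subset of a smooth projective curve over $k$, and then invoke a version of Deligne's $\ell$-independence theorem for local systems on curves (extended to $\ell=p$ as announced in \S\ref{dp}). First I would construct, from a given semistable model $\cur X_0 \to \spf{k\pow{t}}$ of $X$, a proper semistable family $f\colon\cur X \to C$ over a smooth projective curve $C/k$ together with a closed point $c\in C$ with $\widehat{\cur O}_{C,c}\cong k\pow{t}$, such that $f$ is smooth over $U:=C\setminus\{c\}$ and such that pulling back $f$ along $\spec{F}\to C$ recovers $X$. This should follow from a standard limit argument: descend $\cur X_0$ to a semistable family over $\spec{A}$ for some finitely generated $k$-subalgebra $A\subset k\pow{t}$, then algebraise.

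Once this spreading out is in place, the direct images $R^if_*\Q_\ell$ are lisse $\Q_\ell$-sheaves on $U$ for every $\ell\neq p$, and at $\ell=p$ one obtains a companion overconvergent $F$-isocrystal on $U/K$. Because $U$ is defined over the finite field $k$, classical $\ell$-independence for smooth projective varieties over finite fields (Weil II for $\ell\neq p$, together with \cite[Corollary 1]{KM74} at $\ell=p$) would give pointwise weak $\Q$-compatibility of this family of coefficient objects at every closed point of $U$.

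The heart of the argument is then to invoke a Deligne-style theorem from \cite{Del80}: if a family of lisse sheaves, respectively overconvergent $F$-isocrystals, on $U$ is pointwise weakly $\Q$-compatible, then the associated local Weil--Deligne representations at any missing point of $C$ are also weakly $\Q$-compatible. The original statement is for $\ell\neq p$, and the needed $p$-adic analogue is what \S\ref{dp} is announced to establish. Applying this at the point $c$ yields weak $\Q$-compatibility of the local $W'_F$-representations associated to the family $R^if_*\Q_\ell$ at $c$. To conclude, one identifies these local representations with $H^i_\ell(X)$; for $\ell\neq p$ this is the standard base change identifying the stalk at a geometric generic point of $\spec{\widehat{\cur O}_{C,c}}$ with $H^i_\ell(X)$, and for $\ell=p$ one uses that the restriction of the overconvergent $F$-isocrystal on $U$ to the Robba ring $\rk$ agrees with $H^i_\rig(X/\rk)$ as constructed in \cite{LP16}.

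The two places I expect the real work to sit are (i) the spreading-out step, where one must preserve semistability at the specified point $c$ and not merely over some dense open --- this is a technical descent that has to be done carefully but should not be conceptually difficult --- and (ii) the $p$-adic version of Deligne's theorem, which must follow the original strategy over an open curve but with lisse sheaves replaced by overconvergent $F$-isocrystals and the $\ell$-adic monodromy theorem replaced by its $p$-adic analogue. Since the paper announces both of these independently, the proof of Theorem \ref{main2} itself should then reduce to assembling the pieces above.
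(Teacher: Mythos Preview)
Your overall architecture matches the paper's, but there is a genuine gap in the spreading-out step. You assert that one can produce a proper semistable family $f\colon\cur{X}\to C$ over a curve such that the pullback along $\spec{F}\to C$ \emph{recovers $X$ itself}. The ``standard limit argument'' you sketch does not give this: descending $\cur{X}_0\to\spec{k\pow{t}}$ to some finitely generated $A\subset k\pow{t}$ yields a family over an affine variety $\spec{A}$, but there is no reason for the formal arc $\spec{k\pow{t}}\to\spec{A}$ to factor through a curve $C\subset\spec{A}$ in such a way that $\widehat{\cur{O}}_{C,c}\cong k\pow{t}$ with the given identification. In general you cannot algebraise a formal semistable family to a global one that restricts back to the original family on the nose.

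What the paper actually does (Proposition~\ref{spread}) is weaker but sufficient: via Artin approximation one replaces the formal point $\spec{k\pow{t}}\to\spec{A}$ by a henselian one agreeing with it modulo $\mathfrak{m}^n$. The resulting global family $\cur{Y}\to C$ need not pull back to $X$; one only knows that its special fibre agrees with that of $\cur{X}$ \emph{as a log scheme}. The step you are missing is then the bridge from this to an identification of cohomologies: one invokes the log-\'etale smooth and proper base change of Nakayama (Proposition~\ref{llc}) for $\ell\neq p$ and its $p$-adic analogue (Proposition~\ref{plc}) for $\ell=p$, which say that $H^i_\ell(X)$ and $H^i_\ell(\cur{Y}_F)$ depend only on $X_0^\times$. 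Only after this does one run the Deligne-style argument you describe. So your plan becomes correct once you (a) weaken the conclusion of the spreading-out to agreement of log special fibres, and (b) insert the log base change theorems to recover the identification of the Weil--Deligne representations at $c$ with $H^i_\ell(X)$.
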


This is the main step in showing such $\ell$-independence unconditionally for smooth and proper varieties over $F$. The first key result we need in the proof of Theorem \ref{main2} essentially allows us to reduce to the `globally defined' case, and in fact works in slightly larger generality than we will need. So for now let $S$ denote a Dedekind scheme, $s\in S$ a closed point and $\widehat{\cur{O}}_{S,s}$ the completed local ring at $s$. By definition a semistable scheme over $\widehat{\cur{O}}_{S,s}$ is one that is \'etale locally smooth over $\spec{\widehat{\cur{O}}_{S,s}[x_1,\ldots,x_r]/(x_1\cdots x_r)}$.

\begin{proposition} \label{spread} Let $\cur{X} \rightarrow \spec{\widehat{\cur{O}}_{S,s}}$ be a semistable scheme of finite type and $n\geq 2$ an integer. Then there exists an \'etale neighbourhood $(U,u)\rightarrow (S,s)$ of $s$ and a flat scheme $\cur{Y} \rightarrow U$, smooth away from $u$ and semistable at $u$, such that 
\[ \cur{Y} \times_U \frac{ \cur{O}_{U,u}}{\mathfrak{m}^n_u} \cong \cur{X} \times_{\widehat{\cur{O}}_{S,s}} \frac{ \widehat{\cur{O}}_{S,s}}{\mathfrak{m}^n_s} \]
as schemes over $ \cur{O}_{U,u}/\mathfrak{m}^n_u \cong \widehat{\cur{O}}_{S,s}/\mathfrak{m}^n_s$.
\end{proposition}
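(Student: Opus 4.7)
The plan is to combine Artin approximation with descent from the Henselization to an étale neighbourhood, together with openness of the relevant geometric properties for the final shrinking. Write $R := \cur O_{S,s}$, let $R^h$ denote its Henselization and $\widehat R := \widehat{\cur O}_{S,s}$ its completion, and set $\cur X_n := \cur X \otimes_{\widehat R} \widehat R/\mathfrak m^n_s$.

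First I would set up a rigidified moduli functor. Consider the functor $F$ on local $R$-algebras $A$ (equipped with the tautological reduction $A \to A/\mathfrak m^n_A = \widehat R/\mathfrak m^n_s$) sending $A$ to the set of isomorphism classes of pairs $(\cur Y, \varphi)$ where $\cur Y / A$ is a flat scheme of finite presentation that is semistable at the closed point and smooth away from it, together with an isomorphism $\varphi : \cur Y \otimes_A A/\mathfrak m^n_A \isomto \cur X_n$ over $\widehat R/\mathfrak m^n_s$. Because $\cur X$ is finitely presented and because semistability at a fixed point is witnessed by the existence of an étale chart to the finitely presented nc model $\spec{R[x_1,\ldots,x_r]/(x_1\cdots x_r)}$, the functor $F$ commutes with filtered colimits of rings and is thus locally of finite presentation in the sense needed by Artin approximation.

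The main step is then Artin approximation itself. Since $S$ is a Dedekind scheme, $R$ is excellent, so $R^h$ is excellent Henselian. The tautological point $(\cur X, \iden) \in F(\widehat R)$ together with Artin's theorem produces a point $(\cur Y^h, \varphi^h) \in F(R^h)$ congruent to $(\cur X, \iden)$ modulo $\mathfrak m^n$. Writing $R^h = \varinjlim_{(U,u)} \cur O_{U,u}$ as a filtered colimit over étale neighbourhoods of $(S,s)$ with residue field $k(s)$, the standard limit theorems for finite-presentation morphisms let me descend $\cur Y^h$, the isomorphism $\varphi^h$, and the étale chart witnessing semistability at the closed point to a scheme $\cur Y_0 \to U_0$ over some étale neighbourhood $(U_0, u_0)$. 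Flatness of $\cur Y_0/U_0$, smoothness away from $u_0$, and the existence of the semistable chart at $u_0$ are all open conditions on the base, so a final shrinking to $(U,u)$ produces the required $\cur Y \to U$ together with the isomorphism $\cur Y \otimes_U \cur O_{U,u}/\mathfrak m^n_u \cong \cur X \otimes_{\widehat R} \widehat R/\mathfrak m^n_s$.

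The main technical obstacle I expect is the precise verification that the groupoid-valued functor $F$, rigidified at level $n$, really is locally of finite presentation in the form required by Artin's theorem; once this is set up, the excellence of $R$ (automatic from $S$ being Dedekind and essentially of finite type in applications) allows the passage from $\widehat R$ to $R^h$, and the remaining descent to a finite-type étale neighbourhood together with the openness arguments are entirely standard.
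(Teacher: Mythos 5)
Your strategy coincides with the paper's: Artin approximation to pass from $\widehat{\cur O}_{S,s}$ to the Henselization $\cur O^h_{S,s}$, then descent from the Henselization to a genuine \'etale neighbourhood, then openness to shrink. The difference lies in which form of Artin's theorem you invoke. You take the functorial route, applying approximation to a moduli functor $F$ of rigidified semistable deformations, and you correctly single out the verification that $F$ is locally of finite presentation as the main technical point. The paper sidesteps this entirely by using the algebraic (``point'') form of the theorem: it fixes a local parameter $t$, an \'etale cover $\{U_i\}$ of $\cur X$ with each $U_i$ \'etale over $\spec{\widehat{\cur O}_{S,s}[x_1,\ldots,x_{d+1}]/(x_1\cdots x_r - t)}$, and a finitely generated normal $R$-algebra $A\subset\widehat{\cur O}_{S,s}$ over which $\cur X$, the cover, and the charts are all defined; Artin approximation is then applied to the $\widehat{\cur O}_{S,s}$-valued point of the finite-type $R$-scheme $\spec{A}$, for which the locally-of-finite-presentation hypothesis is immediate. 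This descent-to-$A$ step plays the same role as your local-finite-presentation check but is more concrete, and it automatically packages the semistability charts into the approximated data. One imprecision in your setup worth fixing: the ``tautological reduction $A\to A/\mathfrak m^n_A=\widehat R/\mathfrak m^n_s$'' does not exist for a general local $R$-algebra $A$; you would need to carry the map to $\widehat R/\mathfrak m^n_s$ as part of the objects of the category, or (more simply) drop the rigidification entirely and use that Artin's conclusion already supplies a congruence modulo $\mathfrak m^n$, which is exactly the isomorphism $\cur Y\otimes_U\cur O_{U,u}/\mathfrak m^n_u\cong\cur X\otimes_{\widehat R}\widehat R/\mathfrak m^n_s$ that the proposition asks for.
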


\begin{proof} Choose a local parameter $t$ at $s$. After shrinking $S$ we may assume that $S=\spec{R}$ is affine and $t\in R$. Then there exists an \'etale cover $\{ U_i\rightarrow \cur{X} \}$ of $\cur{X}$ such that each $U_i$ is \'etale over some $\spec{\widehat{\cur{O}}_{S,s}[x_1,\ldots,x_{d+1}]/(x_1\cdots x_r -t)}$ (with $r$ and $d$ allowed to vary with $i$.) 

Choose some finitely generated, normal $R$-algebra $A$ contained within $\widehat{\cur{O}}_{S,s}$ such that everything in sight is defined over $A$, i.e. $\cur X$, the \'etale cover $\{U_i \rightarrow \cur X \}$ and the \'etale maps 
\[ U_i \rightarrow \spec{\frac{A[x_1,\ldots,x_{d+1}]}{(x_1\cdots x_r -t)}} .\]
Now by Artin's approximation theorem, the point $\alpha: \spec{\widehat{\cur{O}}_{S,s}}\rightarrow \spec{A}$ of the finite type $R$-scheme $\spec{A}$, coming from the inclusion $A\subset \widehat{\cur{O}}_{S,s}$, can be approximated modulo $t^n$ by a Henselian point, in other words there exists a morphism $\alpha^{h,n}:\spec{\cur{O}^h_{S,s}}\rightarrow \spec{A}$ which agrees with $\alpha$ modulo $t^n$. Then pulling back via $\alpha^{h,n}$ and using the fact that $t$ is a local parameter at $s$ we can see that there exists a semistable family $\cur{Y}\rightarrow \spec{\cur{O}_{S,s}^h}$ which agrees with $\cur X$ modulo $t^n$. 

Now finally we note that there must be some finitely generated sub-algebra $B\subset \cur{O}_{S,s}^h$, \'etale over $S$ such that $\cur{Y}$ is defined over $B$ and smooth away from $s$, setting $U=\spec{B}$ then completes the proof.
\end{proof}

To apply this result, we will specialise, and let $R\cong k\pow{t}$ be the ring of integers inside $F\cong k\lser{t}$. Suppose that we are given a semistable scheme $\cur{X} /R$. We will let $\cur{X}^\times$ denote $\cur X$ endowed with the log structure given by the special fibre, and $X_0^\times$ the special fibre endowed with the inverse image log structure from $\cur{X}$. Let $X$ denote the generic fibre of $\cur{X}$. Then we may consider, as in \cite{Nak98}, the log-\'etale cohomology $H^i_\et(X^{\times,\mathrm{tame}}_0,\Q_\ell)$ of the log scheme $X^\times_0$, as an $\ell$-adic representation of $G_F$ (on which the wild inertia group $P_F\subset I_F$ acts trivially). We have the following logarithmic analogue of the smooth and proper base change theorem.

\begin{proposition}[\cite{Nak98}, Proposition (4.2)] \label{llc} There is an isomorphism 
\[ H^i_\et(X^{\times,\mathrm{tame}}_0,\Q_\ell) \cong H^i_\et(X_{F^\mathrm{sep}},\Q_\ell) \]
of $G_{F}$-representations.
\end{proposition}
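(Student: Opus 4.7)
The plan is to invoke the Kummer-\'etale formalism of Kato, Nakayama, and Fujiwara, in particular log proper base change together with the ``local constancy'' of $R^if^{\log}_*\Q_\ell$ for log smooth, log proper families. Equip $\cur X$ with the fs log structure coming from its special fibre and $\spec R$ with the standard log structure at the closed point $s$, yielding $\cur X^\times$ and $\spec R^\times$. Semistability of $\cur X/R$ means that the structural morphism $f^{\log}: \cur X^\times \to \spec R^\times$ is log smooth (the local model $R[x_1,\ldots,x_{d+1}]/(x_1\cdots x_r - t)$ is visibly so), and classical properness upgrades to log properness.

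By log proper base change combined with the log smoothness of $f^{\log}$, the Kummer-\'etale sheaf $R^if^{\log}_*\Q_\ell$ is locally constant constructible on $\spec R^\times$. Its stalk at a geometric log point $\bar\eta$ above the generic point is $H^i_\et(\cur X_{\bar\eta},\Q_\ell) = H^i_\et(X_{F^{\mathrm{sep}}},\Q_\ell)$, since pulling back to $\bar\eta$ trivialises all log structures and reduces Kummer-\'etale cohomology to ordinary \'etale cohomology. Its stalk at a geometric log point $\bar s^\times$ above $s$ is, by definition, $H^i_\et(X^{\times,\mathrm{tame}}_0,\Q_\ell)$. Being stalks of the same locally constant sheaf, the two groups are canonically isomorphic as vector spaces.

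For Galois equivariance, one identifies the Kummer-\'etale fundamental group $\pi_1^{\log}(\spec R^\times,\bar s^\times)$ with the tame quotient $G_F^{\mathrm{tame}} = G_F/P_F$. The natural surjection $G_F \twoheadrightarrow G_F^{\mathrm{tame}}$ makes the monodromy action on the stalks of $R^if^{\log}_*\Q_\ell$ into a $G_F$-action through which $P_F$ acts trivially. On the generic side this reproduces the usual \'etale $G_F$-action on $H^i_\et(X_{F^{\mathrm{sep}}},\Q_\ell)$, and on the special side it is the action on $H^i_\et(X^{\times,\mathrm{tame}}_0,\Q_\ell)$ induced by the pullback $G_F \to \pi_1^{\log}$, so the abstract isomorphism of the previous paragraph is $G_F$-equivariant.

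The principal obstacle is the verification that $R^if^{\log}_*\Q_\ell$ really is locally constant constructible for $f^{\log}$ log smooth and log proper --- this is the log-smooth, log-proper analogue of the classical smooth and proper base change theorem, and is the technical heart of Nakayama's work. A secondary and more elementary point to check is that pullback along a geometric log point above $s$ recovers precisely the tame log cohomology $H^i_\et(X^{\times,\mathrm{tame}}_0,\Q_\ell)$ rather than some larger object: this reflects the structural fact that Kummer-\'etale covers of $\spec R^\times$ are cofinal with the standard system of tamely ramified covers of $\spec R$, which is in turn the reason why $P_F$ acts trivially on the right-hand side, as indicated in the parenthetical before the proposition.
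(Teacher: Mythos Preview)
Your proposal is correct and is essentially a sketch of Nakayama's argument; the paper itself does not supply a proof but simply cites \cite{Nak98}, Proposition~(4.2), so there is nothing further to compare.
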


Of course, there is a similar result for $p$-adic cohomology, which goes as follows. Let $\underline{\mathbf{M}\Phi}_K^N$ denote the category of $(\varphi,N)$-modules over $K$, that is finite dimensional vector spaces together with a semilinear, bijective  Frobenius $\varphi$ and a nilpotent endomorphism $N$, such that $N\varphi=q\varphi N$. Given such a $(\varphi,N)$-module $V$, we can put a connection on $V\otimes_K \cur{R}_K$ by setting $\nabla(v\otimes 1)= N(v)\otimes t^{-1}$, and this induces a fully faithful functor
\[ -\otimes_K \rk : \underline{\mathbf{M}\Phi}_K^N\rightarrow \underline{\mathbf{M}\Phi}_{\rk}^\nabla \]
from $(\varphi,N)$-modules over $K$ to $(\varphi,\nabla)$-modules over $\rk$, whose essential image is exactly the objects for which the connection acts unipotently. For $\cur X$ as above, we may consider the log-crystalline cohomology $H^i_{\log\text{-}\mathrm{cris}}(X^\times_0/K^\times):=H^i_{\log\text{-}\mathrm{cris}}(X_0^\times/W^\times)\otimes K$ of its special fibre as a $(\varphi,N)$-module over $K$, as in \cite{HK94} (the notation $W^\times$, $K^\times$ refers to the fact that we are endowing $W$ with the Teichm\"uller lift of the log structure of the punctured point).

\begin{proposition}[\cite{LP16}, Theorem 5.46] \label{plc} There is an isomorphism
\[H^i_{\log\text{-}\mathrm{cris}}(X^\times_0/K^\times) \otimes_K \cur{R} \cong H^i_{\rig}(X/\rk) \]
inside $ \underline{\mathbf{M}\Phi}_{\rk}^\nabla $. 
\end{proposition}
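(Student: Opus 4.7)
The plan is to realise both sides as different incarnations of the log de Rham cohomology of a common weak formal lift. Since semistability is \'etale local, one starts by lifting $\cur X^\times/R^\times$ to a semistable weak formal log scheme $\widehat{\cur X}^\times \to \spf{W\pow{t}^\dagger}^\times$: the standard semistable charts $R[x_1,\ldots,x_{d+1}]/(x_1\cdots x_r-t)$ lift tautologically to $W\pow{t}^\dagger$, and \'etale descent for weak formal schemes glues the local lifts. A Frobenius lift compatible with $t\mapsto t^q$ can be arranged after further \'etale refinement.

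Second, both sides are read off from the log de Rham complex $\Omega^\bu_{\widehat{\cur X}^\times/W^\times}$ of this lift. By the equicharacteristic analogue of the Hyodo--Kato construction, restricting this complex along $t=0$ (with the monodromy operator built from the residue of the $d\log t$ component) computes $H^i_{\log\text{-}\mathrm{cris}}(X^\times_0/K^\times)$ as a $(\varphi,N)$-module over $K$; base-changing back to $\rk$ via $\nabla(v\otimes 1)=N(v)\otimes t^{-1}$ then realises the left-hand side of the claimed isomorphism as a unipotent $\pn$-module over $\rk$. On the other hand, by the construction of rigid cohomology for varieties over $F\cong k\lser{t}$ in \cite{LP16}, the groups $H^i_\rig(X/\ekd)$ (and hence $H^i_\rig(X/\rk)$ after base change) are computed by the overconvergent de Rham cohomology of the generic fibre of $\widehat{\cur X}$, with its natural $\pn$-structure inherited from the same log de Rham complex.

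The comparison is then forced by the explicit local calculation in semistable coordinates $x_1\cdots x_r=t$, where $d\log t=\sum_{i=1}^r d\log x_i$; this identifies the action of $t\,\partial_t$ on the log de Rham complex away from $t=0$ with the residue $N$ at $t=0$, giving exactly the relation $\nabla=N\otimes t^{-1}$. The main obstacle I expect is the matching of connections in step two: showing that the Hyodo--Kato isomorphism (whose definition involves trivialising a monodromy in order to restrict the log complex to $t=0$) is compatible with the Gauss--Manin connection on the rigid cohomology of the generic fibre, so that the $\pn$-module structures agree on the nose rather than merely up to conjugation. Running the entire argument over $\rk$ rather than $K$ is what makes this clean: the essential image of $-\otimes_K\rk$ consists precisely of the unipotent $\pn$-modules, and one verifies that both sides lie in this subcategory and are isomorphic as such.
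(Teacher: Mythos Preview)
This proposition is not proved in the paper: it is simply cited as \cite[Theorem 5.46]{LP16}, with no argument given. So there is no ``paper's own proof'' to compare against here.

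That said, your sketch is in the right spirit, and one can sanity-check it against the proof the paper \emph{does} give for the non-abelian analogue, Proposition~\ref{fgssp}. There the intermediate object lives over $\cur{E}_K^+ = W\pow{t}\otimes_W K$ rather than over a weak formal lift to $W\pow{t}^\dagger$: one has a log-crystalline invariant of the total family $\cur{X}^\times$ over $\cur{E}_K^+$, whose base change along $\cur{E}_K^+\hookrightarrow\ekd$ yields the rigid object over $\ekd$ (hence over $\rk$), and whose base change along the ``$t=0$'' map $\cur{E}_K^+\to K$ yields the log-crystalline object over $K$. The final step, matching the two specialisations after tensoring up to $\rk$, is then \cite[Proposition 5.34]{LP16}. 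Your use of a weak formal lift and explicit log de Rham computations is a reasonable alternative packaging of the same idea, but the cleaner route (and the one the cited reference takes, judging from the non-abelian version) is to work directly with log-crystalline cohomology of $\cur{X}^\times$ relative to the two bases and invoke base change, rather than to build a lift by hand and redo the Hyodo--Kato comparison from scratch.
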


We therefore obtain the following.

\begin{corollary} \label{rg} Let $\cur{X} \rightarrow \spec{R}$ be a proper, semistable scheme with generic fibre $X$. Then there exists a smooth, geometrically connected curve $C/k$, a $k$-rational point $c\in C(k)$, a proper, semistable scheme $\cur{Y}\rightarrow C$, smooth away from $c$, and an isomorphism $R\cong \widehat{\cur{O}}_{C,c}$ (inducing $F\cong \widehat{k(C)}_c$) such that
\[ H^i_\et(X_{F^\mathrm{sep}},\Q_\ell) \cong H^i_\et(\cur{Y}_{F^\mathrm{sep}},\Q_\ell) \]
as $G_{F}$ representations for all $i,\ell\neq p$, and
\[ H^i_\rig(X/\rk) \cong H^i_\rig(\cur{Y}_{F}/\rk)  \]
as $\pn$-modules, for all $i$. In particular, $C_{\ell,w}(X,H^i)$ is equivalent to $C_{\ell,w}(\cur{Y}_F,H^i)$
\end{corollary}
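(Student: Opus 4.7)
The plan is to invoke Proposition \ref{spread} directly, with a carefully chosen global base, and then conclude via the invariance of cohomology under logarithmic deformation expressed in Propositions \ref{llc} and \ref{plc}.

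First, I would apply Proposition \ref{spread} with $S = \A^1_k$ and $s = 0$, so that $\widehat{\cur{O}}_{S,s} = k\pow{t} = R$. Taking $n \geq 2$ yields an \'etale neighbourhood $(U,u) \to (\A^1_k, 0)$ and a flat scheme $\cur{Y} \to U$, smooth away from $u$ and semistable at $u$, with $\cur{Y} \times_U \cur{O}_{U,u}/\mathfrak{m}_u^n \cong \cur{X} \times_R R/\mathfrak{m}^n$ under the induced identification $\cur{O}_{U,u}/\mathfrak{m}_u^n \cong R/\mathfrak{m}^n$. A spreading-out argument for $\cur{X}$ itself, together with shrinking $U$, allows us to assume $\cur{Y}/U$ is proper and that $U$ is a smooth geometrically connected curve; take $C := U$ and $c := u$. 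The \'etaleness of $(U,u) \to (\A^1_k,0)$ provides the isomorphism $R \cong \widehat{\cur{O}}_{C,c}$, and hence $F \cong \widehat{k(C)}_c$.

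Next, I would observe that taking $n = 2$ suffices to identify the log structures on the special fibres: the natural log structure on a semistable scheme over a complete DVR (given by the divisor of the special fibre) depends only on the first-order thickening of the reduced special fibre inside the total space. Consequently, the mod-$\mathfrak{m}^2$ isomorphism upgrades to an isomorphism of log schemes $X_0^\times \cong (\cur{Y}_c)^\times$, where $(\cur{Y}_c)^\times$ denotes the special fibre of the completion of $\cur{Y}$ at $c$ with its natural log structure. Applying Proposition \ref{llc} to both $\cur{X}/R$ and the completion of $\cur{Y}$ at $c$ (which is a proper semistable scheme over $R$) produces, for each $\ell \neq p$, a chain of $G_F$-equivariant isomorphisms
\[ H^i_\et(X_{F^{\mathrm{sep}}},\Q_\ell) \cong H^i_\et(X_0^{\times,\mathrm{tame}},\Q_\ell) \cong H^i_\et((\cur{Y}_c)^{\times,\mathrm{tame}},\Q_\ell) \cong H^i_\et(\cur{Y}_{F^{\mathrm{sep}}},\Q_\ell), \]
and analogously Proposition \ref{plc} yields isomorphisms
\[ H^i_\rig(X/\rk) \cong H^i_{\lcris}(X_0^\times/K^\times)\otimes_K \cur{R} \cong H^i_\rig(\cur{Y}_F/\rk) \]
in $\pnm{\rk}$. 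The final equivalence $C_{\ell,w}(X,H^i) \Leftrightarrow C_{\ell,w}(\cur{Y}_F,H^i)$ is then immediate, since the Weil--Deligne representations and their monodromy filtrations are functorial invariants of the data just identified.

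The main obstacle is the identification of log structures at the level of second-order thickenings; once this is secured, Propositions \ref{llc} and \ref{plc} do all the remaining work. A minor point to verify is that the functoriality in the cited propositions is strong enough to guarantee that the composed isomorphisms above respect the Galois action (resp. the $\pn$-structure) canonically and not merely up to non-equivariant isomorphism, but this follows from the fact that both sides of each proposition are built from $X_0^\times$ alone together with the action of $G_F$ (resp. Frobenius and connection over $\rk$) on it.
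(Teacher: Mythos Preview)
Your proposal is correct and follows essentially the same approach as the paper: apply Proposition~\ref{spread} (with $n=2$) to globalise the semistable model, observe that agreement modulo $\mathfrak{m}^2$ forces the special fibres to coincide as log schemes, and then invoke Propositions~\ref{llc} and~\ref{plc}. Your version is simply more explicit about the choice of base $S=\A^1_k$, the reason $n=2$ suffices for the log structure, and the chain of isomorphisms through log cohomology; the paper compresses all of this into two sentences.
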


\begin{proof} By Proposition \ref{spread} we know that there exists a globally defined semistable scheme which agrees with $\cur{X}$ up to order $2$, hence the special fibres coincide as log schemes. Hence by Propositions \ref{llc} and \ref{plc} the cohomologies also coincide.
\end{proof}

Our primary interest here is in $\ell$-independence results, however, let us note in passing that Corollary \ref{rg} implies that the Clemens--Schmid exact sequence, obtained by the first author in collaboration with Tsuzuki in \cite{CT14}, exists for $\cur{X}$. 

\begin{corollary} Let $d=\dim(\cur{X}/R)=\dim \cur X-1$. There is a Clemens--Schmid long exact sequence 
\[ \ldots \rightarrow H^m_\rig(X_0/K)\rightarrow H^m_{\log\text{-}\mathrm{cris}}(X_0^\times/K^\times) \overset{N}{\rightarrow} H^m_{\log\text{-}\mathrm{cris}}(X_0^\times/K^\times)(-1) \rightarrow H^{2d-m}_\rig(X_0/K)^\vee(-d-1)\rightarrow \ldots \]
\end{corollary}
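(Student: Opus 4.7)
The strategy is to reduce the problem to the case of a globally defined semistable family, to which the Clemens--Schmid exact sequence of Chiarellotto--Tsuzuki \cite{CT14} applies directly. The key point is that the terms appearing in the exact sequence
\[ \ldots \rightarrow H^m_\rig(X_0/K)\rightarrow H^m_{\log\text{-}\mathrm{cris}}(X_0^\times/K^\times) \overset{N}{\rightarrow} H^m_{\log\text{-}\mathrm{cris}}(X_0^\times/K^\times)(-1) \rightarrow \ldots \]
only involve cohomological invariants of the special fibre $X_0$ together with its log structure $X_0^\times$, not of the generic fibre or of any global model.

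Concretely, the plan is as follows. First, apply Corollary \ref{rg} to the semistable scheme $\cur{X}/R$ to produce a smooth geometrically connected curve $C/k$, a $k$-rational point $c\in C(k)$ with $\widehat{\cur{O}}_{C,c}\cong R$, and a proper semistable scheme $\cur{Y}\rightarrow C$ smooth away from $c$, whose special fibre at $c$ agrees with $X_0^\times$ as a log scheme (since the constructions in Proposition \ref{spread} match $\cur{Y}$ with $\cur{X}$ modulo $t^2$, in particular the special fibres coincide together with the log structures induced from their respective total spaces). Second, invoke the Clemens--Schmid exact sequence of \cite{CT14} for the global semistable family $\cur{Y}\rightarrow C$ at $c$: this yields a long exact sequence of exactly the desired shape, with $X_0^\times$ replaced by $\cur{Y}_c^\times$.

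Third, transport the sequence back to $\cur{X}$: the log-crystalline cohomology $H^m_{\log\text{-}\mathrm{cris}}(\cur{Y}_c^\times/K^\times)$ is canonically isomorphic to $H^m_{\log\text{-}\mathrm{cris}}(X_0^\times/K^\times)$ as a $(\varphi,N)$-module (the functor only depending on the log scheme $\cur{Y}_c^\times\cong X_0^\times$), and similarly the rigid cohomology $H^m_\rig(\cur{Y}_c/K)$ coincides with $H^m_\rig(X_0/K)$. The specialisation map and the Poincar\'e duality pairing entering into the Clemens--Schmid sequence are intrinsic to the special fibre with its log structure, so the identification of sequences is automatic.

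The main point to verify is therefore that the construction of the maps in \cite{CT14} is genuinely intrinsic to the log special fibre rather than dependent on the global curve $C$; but this is essentially a book-keeping exercise, since each map (the rigid-to-log-crystalline comparison, the monodromy $N$, and the duality map) admits a description purely in terms of $X_0^\times$. Once this is recorded, the theorem follows immediately from \cite{CT14} applied to $\cur{Y}/C$ combined with Corollary \ref{rg}.
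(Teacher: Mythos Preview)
Your proposal is correct and follows essentially the same route as the paper: reduce to the globally defined case via Proposition~\ref{spread} (which you invoke through Corollary~\ref{rg}), apply the Clemens--Schmid sequence of \cite{CT14} there, and observe that all terms and maps depend only on the log special fibre $X_0^\times$. The paper's proof is more terse but makes exactly the same two moves; your only excess is the appeal to Corollary~\ref{rg}, whereas Proposition~\ref{spread} alone already gives the needed identification of log special fibres.
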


\begin{proof} When $\cur{X}$ extends to a global semistable family, this is the main result of \cite{CT14}. Given Proposition \ref{spread} it therefore suffices to note that all terms in the exact sequence only depend on the log scheme $X_0^\times$.
\end{proof}

To return to our main focus, then, to prove Theorem \ref{main2} it suffices to do so in the `globally defined' case, i.e. we may assume that we have a smooth geometrically connected curve $C/k$, $c\in C(k)$ such that $F=\widehat{k(C)}_c$, and a semistable scheme $\cur{X}\rightarrow C$, smooth away from $c$, such that $X=\cur{X}_F$.

Let $U=C\setminus c$, and consider a collection $\{\cur{F}_\ell \}_\ell$ of local systems on $U$, where $\cur{F}_\ell$ is a lisse $\Q_\ell$ sheaf on $U$ for $\ell\neq p$ and $\cur{F}_p$ is an overconvergent $F$-isocrystal on $U/K$ (here $F$ is the $q=\#k$-power Frobenius). Then for any closed point $x\in U$ the action of geometric Frobenius on $\cur{F}_{\ell,\bar{x}}$ for $\ell\neq p$, plus the action of $F^{\deg (x)}$ on $\cur{F}_{p,x}$, defines a collection $\{\cur{F}_{\ell,x} \}_\ell$ of representations of the Weil group $W_{k(x)}$.

\begin{definition} \begin{enumerate} \item We say that the system $\{ \cur{F}_\ell\}$ is weakly $\Q$-compatible if $\{\cur{F}_{\ell,x} \}_\ell$ is so for all closed points $x\in U$.
\item We say that the system $\{ \cur{F}_\ell\}$ is pure of weight $n$ if for each closed point $x\in U$ the eigenvalues of $\mathrm{Frob}_{k(x)}$ (resp. $F^{\deg(x)}$) on $\cur{F}_{\ell,\bar{x}}$ (resp. $\cur{F}_{p,x}$) are Weil numbers of weight $n$.
\end{enumerate}
\end{definition}

Given such a system, we may also restrict to the Weil--Deligne group at $c$. When $\ell\neq p$, this is fairly standard, since we may consider the Galois group $G_F$ as a decomposition group at $c$, and hence restrict the $\ell$-adic representation $\pi_1(U,\bar{\eta})\rightarrow \cur{F}_{\ell,\bar\eta}$ of the fundamental group of $U$ (based at some geometric generic point) to get an $\ell$-adic representation of $G_F$, and hence a Weil--Deligne representation, which we shall call $\cur{F}_{\ell,c}$. When $\ell=p$ we use the construction of \cite[\S6.1]{Tsu98}. Pulling back to an appropriate tubular neighbourhood of $c$ gives a functor
\[ F\text{-}\mathrm{Isoc}^\dagger(U/K)\rightarrow \underline{\mathbf{M}\Phi}^\nabla_{\cur{R}_{c}}\]
where $\cur{R}_{c}$ is a copy of the Robba ring at $c$. Then applying Marmora's functor from $\pn$-modules over the Robba ring to Weil--Deligne representations \cite{Mar08} gives us a $K^\mathrm{un}$-valued representation of $W_F'$ which we shall call $\cur{F}_{p,c}$. The key result is then the following, whose proof we will defer to \S\ref{dp} below.

\begin{proposition} \label{gllo} If $\{\cur{F}_\ell \}_\ell $ is a weakly $\Q$-compatible system on $U$, which is moreover pure of some weight $n$, then $\{\cur{F}_{\ell,c} \}_\ell $ is a weakly $\Q$-compatible system of Weil--Deligne representations.
\end{proposition}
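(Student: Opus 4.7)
The plan is to follow closely Deligne's argument for the $\ell\neq p$ case from \cite[\S1.8]{Del80}, adapting it to include $\ell=p$ by invoking the theory of $p$-adic weights for overconvergent $F$-isocrystals due to Kedlaya \cite{Ked06b}, together with Marmora's functor \cite{Mar08} from $(\varphi,\nabla)$-modules on the Robba ring to $p$-adic Weil--Deligne representations.

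The first key step is to establish, for each $\ell$ (including $\ell=p$), that the graded pieces $\mathrm{Gr}^M_k \cur{F}_{\ell,c}$ of the monodromy filtration on the local Weil--Deligne representation at $c$ are pure of weight $n+k$. For $\ell\neq p$ this is Deligne's weight-monodromy theorem at a missing point of a curve (Weil II, 1.8.4). For $\ell=p$, the analogous statement follows by first restricting $\cur{F}_p$ to a tubular neighbourhood of $c$ to produce a $(\varphi,\nabla)$-module on $\cur{R}_c$, applying Kedlaya's $p$-adic purity, and then transferring the resulting weight information via Marmora's functor to obtain the corresponding purity for $\cur{F}_{p,c}$.

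The second key step is to extract weak $\Q$-compatibility of the traces on graded pieces from the weak compatibility of $\{\cur{F}_\ell\}$ on $U$. Here Deligne combines a Chebotarev-type density argument with purity: the Frobenius characters at closed points of $U$ (which by hypothesis lie in $\Q$ and are independent of $\ell$) determine the semisimplified $\pi_1(U)$-representations up to isomorphism over a common overfield, and hence --- after restriction to the decomposition group at $c$ and passage through the appropriate local monodromy machinery --- determine the Weil-group part of $\cur{F}_{\ell,c}^{F\text{-}\mathrm{ss}}$ in an $\ell$-independent and $\Q$-rational way. The traces on individual graded pieces $\mathrm{Gr}^M_k$ are then recovered from the total Weil trace by weight-decomposition, using that Frobenius eigenvalues on different graded pieces are distinguishable by their absolute values (this is precisely the purity established in the first step).

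The principal obstacle lies in making this second step rigorous for $\ell=p$: in the $\ell\neq p$ case the Chebotarev-type density, the global trace formula, and the weight bounds are all standard parts of the \'etale formalism, whereas for $\ell=p$ one must substitute the corresponding statements for overconvergent $F$-isocrystals on $U$ (using the rigid trace formula and Kedlaya's weight theory) and keep track of how these translate through Marmora's functor to statements about $K^{\mathrm{un}}$-valued Weil--Deligne representations. The delicate point is that weak $\Q$-compatibility requires traces to genuinely lie in $\Q$ --- not merely to satisfy absolute-value constraints --- and $p$-adic purity is naturally measured with respect to a $p$-adic rather than complex norm, so one must verify that the separation by weight produces algebraic answers compatible with those coming from the $\ell\neq p$ side. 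Executing this bookkeeping is the substantive work to be carried out in \S\ref{dp}.
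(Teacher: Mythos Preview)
Your first step --- establishing weight-monodromy at $c$ for each $\ell$, including $\ell=p$ --- is correct and is indeed used in the paper (which cites Crew \cite[Theorem 10.8]{Cre98} for the $p$-adic case rather than Kedlaya, but this is a minor point).

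The second step, however, misidentifies both the method and the reference. The relevant input from Deligne is \cite[Th\'eor\`eme 9.8]{Del73}, not \cite[\S1.8]{Del80}, and the argument there is an \emph{$L$-function} argument, not Chebotarev density. The point is that the completed global $L$-function $\widehat{L}(\cur{F}_\ell,t)=\prod_{v\in\norm{X}}L_v(\cur{F}_\ell,t)$ (product over \emph{all} places of the compactified curve, including $c$) satisfies a functional equation, and since the local factors at points of $U$ are independent of $\ell$ by hypothesis, the functional equation together with twists by finite-order id\`ele class characters pins down the local factor at $c$ independently of $\ell$. This yields $\Q$-compatibility of the full semisimplifications $\{\cur{F}_{\ell,c}^{\mathrm{ss}}\}$, after which weight-monodromy separates the graded pieces.

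The substantive new work carried out in \S\ref{dp} is therefore not the weight bookkeeping you describe but rather establishing the $p$-adic ingredients needed to run Deligne's $L$-function argument: (i) a functional equation for $\widehat{L}(\cur{F}_p,t)$, obtained by constructing the intermediate extension $j_{!+}\cur{F}_p$ as a holonomic arithmetic $\cur{D}^\dagger$-module on a formal lift of the compactified curve, identifying its local $L$-factors with those of $\cur{F}_p$ via a local calculation at $c$, and then invoking Caro's Poincar\'e duality and cohomological formula; and (ii) the ability to twist $\cur{F}_p$ by finite-order characters of the id\`ele class group, via Crew's equivalence between $p$-adic representations with finite local monodromy and unit-root $F$-isocrystals. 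Your Chebotarev proposal would in any case run into the difficulty that for $\ell=p$ the overconvergent $F$-isocrystal $\cur{F}_p$ is not a representation of $\pi_1^\et(U)$, so there is no decomposition group at $c$ to restrict to; the passage to the local object is Tsuzuki's analytic restriction $\mathbf{i}_c^*$ to a $(\varphi,\nabla)$-module over $\cur{R}_c$, which does not arise from any group-theoretic restriction.
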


We can now complete the proof of Theorem \ref{main2}

\begin{proof}[Proof of Theorem \ref{main2}]
We may assume that $X$ is globally defined, i.e. we have $C,c,U,\cur X$ as above such that $F=\widehat{k(C)}_c$ and $\cur{X}_F\cong X$. We will apply Proposition \ref{gllo}, taking $\cur{F}_\ell$ to be the $i$th higher direct image of the constant sheaf under the map $f:\cur{X}|_U\rightarrow U$. Hence for $\ell\neq p$ we take $\cur{F}_\ell= \mathbf{R}^if_*\Q_\ell$, and for $\ell=p$ we take $\cur{F}_p$ to be the overconvergent $F$-isocrystal $\mathbf{R}^if_*\cur{O}_{X/K}^\dagger$ on $U/K$ constructed by Matsuda and Trihan in \cite{MT04}. By smooth and proper base change in \'etale cohomology we have, for any closed point $x\in U$
\[ \cur{F}_{\ell,\overline{x}} = H^i_\et(\cur{X}_{\bar x},\Q_\ell) \]
and by the same result for Ogus' convergent cohomology \cite[Proposition 3.5]{Ogu84} we have
\[ \cur{F}_{p,x} = H^i_\rig(\cur{X}_x/K(x)) \]
where $K(x)$ is the fraction field of the Witt vectors of $k(x)$. Hence by $C_{\ell,w}(\cur{X}_x,H^i)$ we know that $\{ \cur{F}_\ell \}_\ell$ is a $\Q$-compatible system on $U$, and by Proposition \ref{gllo} it follows that $\{ \cur{F}_{\ell,c} \}_\ell$ is a weakly $\Q$-compatible system of Weil--Deligne representations. But again applying smooth and proper base change in \'etale cohomology tells us that $\cur{F}_{\ell,c}\cong H^i_\ell(X)$ for $\ell\neq p$, and the proof of \cite[Proposition 5.52]{LP16} tells us that $\cur{F}_{p,c} = H^i_p(X) $. The result follows.
\end{proof}

\section{Cohomology over equicharacteristic local fields II} \label{clf2}

In this section, we generalise Theorem \ref{main2} to apply to all smooth and proper varieties over an equicharacteristic local field, not just those with semistable reduction.

\begin{theorem}\label{main22} Suppose that $F\cong k\lser{t}$ is equicharacteristic local, and let $X/F$ be a smooth, proper variety. Then $C_{\ell,w}(X,H^i)$ holds. 
\end{theorem}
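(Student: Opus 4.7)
The plan is to reduce to Theorem~\ref{main2} via de Jong's alteration theorem, then descend through the finite extension thus introduced by an argument using weights.

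First, by de Jong's theorem on alterations of schemes over a complete discrete valuation ring (applied to $X$ viewed as a $k\pow{t}$-scheme via any proper model), one obtains a finite Galois extension $F'/F$ together with a proper, generically finite morphism $f\colon Y \to X_{F'}$ where $Y$ is smooth projective over $F'$ and admits semistable reduction over $\cur{O}_{F'}$. The composition $\frac{1}{\deg f}\, f_*\circ f^*$ is the identity on cohomology, so $H^i_\ell(X_{F'})$ is realised as a direct summand of $H^i_\ell(Y)$ inside the category of $W_{F'}'$-representations, and because the splitting is given by an algebraic correspondence with $\Q$-coefficients it is compatible as $\ell$ varies. Applying Theorem~\ref{main2} to $Y$ (viewed as a smooth proper variety with semistable reduction over $F'$) yields $C_{\ell,w}(Y, H^i)$; passing to the $\Q$-rational summand then gives $C_{\ell,w}(X_{F'}, H^i)$.

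The remaining step is to descend from $F'$ back to $F$. The underlying vector spaces of $H^i_\ell(X)$ and $H^i_\ell(X_{F'})$ are literally the same, and their monodromy filtrations coincide (the operator $N$ only gets rescaled by the ramification index). So we must promote $\ell$-independence of traces of $W_{F'}$ on the graded pieces $\mathrm{Gr}_k^M H^i_\ell(X)$ to $\ell$-independence of traces of the larger group $W_F$. The key input is the weight--monodromy theorem in the equicharacteristic local setting---classical for $\ell\neq p$ and available for $\ell=p$ through the framework of $\pn$-modules over $\cur{R}$ together with Marmora's functor---which says that each graded piece is pure of a prescribed weight after Frobenius semisimplification. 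Combined with Lemma~\ref{ws1}, the descent reduces to the statement that a family of semisimple pure $W_F'$-representations whose restrictions to the finite-index subgroup $W_{F'}'$ are $\ell$-compatible is itself $\ell$-compatible. A Clifford-theoretic analysis identifies the extensions of a fixed semisimple $W_{F'}'$-representation to $W_F'$ with a torsor under twisting by characters of $W_F/W_{F'}$, and purity together with the known weights should then rigidify the twist uniformly across $\ell$.

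The main obstacle is precisely this final descent step. While the weight--monodromy input is known in each of the cases $\ell\neq p$ and $\ell=p$, combining it with Frobenius semisimplification in order to rigidify the Clifford-theoretic extension is delicate---particularly at $\ell=p$, where one must verify that Marmora's functor and the resulting Weil--Deligne representation behave compatibly under the ramified base change $F'/F$, so that the weight structure propagates to $W_F'$ in a manner independent of $\ell$.
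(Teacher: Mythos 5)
Your first reduction (a single alteration $Y\to X_{F'}$, the transfer projector $\tfrac{1}{\deg f}f_*f^*$, and Theorem~\ref{main2} to deduce $C_{\ell,w}(X_{F'}/F',H^i)$) is fine and is a reasonable alternative to the paper's proper hypercover. The genuine gap is in the descent from $F'$ back to $F$. You are trying to prove: if the restriction of $H^i_\ell(X)$ to the finite-index subgroup $W_{F'}$ is weakly $\Q$-compatible, then $H^i_\ell(X)$ is weakly $\Q$-compatible as a $W_F$-representation. This implication is false for abstract Weil--Deligne representations: twisting by a character of $W_F/W_{F'}$ leaves the restriction (and the monodromy filtration, and purity of graded pieces) unchanged but can change the traces of elements of $W_F\setminus W_{F'}$, and nothing forces the twist to be constant in $\ell$. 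Purity of the graded pieces only constrains absolute values of Frobenius eigenvalues, so it cannot ``rigidify the twist uniformly across $\ell$'' as you hope; the Clifford-theoretic step is not a proof but precisely the missing content. This is the reason why ``restrict, then try to come back'' is the wrong direction here.

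The paper goes the other way around this obstacle. It uses a proper hypercover $X_\bullet\to X$ with each $X_n$ defined over some $F_n$, and then applies Lemma~\ref{ind}: passing from $X_n/F_n$ to $X_n$ \emph{viewed as an $F$-variety} replaces the $W_{F_n}$-representation by its \emph{induction} to $W_F$ (for $F_n/F$ separable; the inseparable case needs the Frobenius twist argument in that lemma). Induction manifestly preserves rationality and $\ell$-independence of traces on graded pieces, so no descent of the Clifford type is needed. The remaining work in the paper is a weight argument with the hypercover spectral sequence $E_1^{n,i}=H^i_\ell(X_n)\Rightarrow H^{n+i}_\ell(X)$, showing via quasi-purity (Proposition~\ref{qpwd}) that only the $E_2^{0,\bullet}$ column survives, which gives a long exact sequence and hence the desired compatibility. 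If you want to salvage your single-alteration approach, the correct move is to regard $Y$ as an $F$-variety and compose $Y\to X_{F'}\to X$, so that $H^i_\ell(X)$ is a direct summand of $H^i_\ell(Y/F)=\mathrm{Ind}_{W_{F'}}^{W_F}H^i_\ell(Y/F')$; but then you must separately handle the possibility that $F'/F$ is inseparable, which is exactly what Lemma~\ref{ind} is for.
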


The proof will be via a rather delicate weight argument for Weil--Deligne representations. We will therefore let $\Q'_\ell$ denote the field $\Q_\ell$ if $\ell\neq p$, and $K^\mathrm{un}$ if $\ell=p$, and throughout consider $\Q_\ell'$-valued Weil--Deligne representations (i.e. $\Q_\ell'$-valued representations of $W_F'$). If
\[(V,N,\rho:W_F\rightarrow \mathrm{GL}(V)) \]
is such a representation then we have a canonical filtration
\[ M_k:=\sum_{k=i-j} \ker\;N^{i+1} \cap\; \mathrm{im} \;N^j\]
on $V$, called the monodromy filtration. This is functorial in $V$ in the sense that if $f:V\rightarrow W$ is a morphism of Weil--Deligne representations, then $f(M_kV)\subset M_kW$ for all $k$. Moreover each $M_k$ is preserved by the $W_F$-action, and hence there is an induced action of $W_F$ on $\mathrm{Gr}_k^MV$. 

\begin{definition} We say that $V$ is quasi-pure of weight $i$ if the eigenvalues of any lift $\varphi\in W_F$ of geometric Frobenius are $q$-Weil numbers of weight $i$. By convention, the zero object is pure of \emph{all} weights.
\end{definition}

Since the inertia $I_F$ acts through a finite quotient, this does not depend on the choice of Frobenius lift $\varphi$. We will need the following results concerning purity and Weil--Deligne representations.

\begin{proposition} \label{qpwd} \begin{enumerate}
\item \label{1} The subcategory $\mathrm{Rep}_{\Q_\ell'}(W_F')^{(i)}\subset \mathrm{Rep}_{\Q_\ell'}(W_F')$ of Weil--Deligne representations which are pure of some fixed weight $i$ is abelian, i.e. stable under taking kernels and cokernels.
\item \label{2} Any morphism $f:V\rightarrow W$ between Weil--Deligne representations which are pure of weights $i>j$ respectively is zero.
\item \label{3} Let $X/F$ be smooth and proper. Then the $\Q_\ell'$-valued Weil--Deligne representation $H^i_\ell(X)$ attached to the $\ell$-adic cohomology of $X$ is quasi-pure of weight $i$.
\end{enumerate}
\end{proposition}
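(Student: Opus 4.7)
The plan for part (1) is formal: kernels and cokernels in the category of Weil--Deligne representations are computed on the underlying vector spaces, with the $W_F$-action and the nilpotent $N$ induced in the obvious way. Since the Frobenius eigenvalues on any sub-representation or quotient form a sub-multiset of those on the ambient representation, and the condition of being a $q$-Weil number of weight $i$ applies to individual eigenvalues, quasi-purity of weight $i$ is preserved under taking sub-objects and quotients, making $\mathrm{Rep}_{\Q_\ell'}(W_F')^{(i)}$ an abelian subcategory. For part (2), given a morphism $f \colon V \to W$ with $V$ quasi-pure of weight $i$ and $W$ quasi-pure of weight $j$, $i > j$, the image $\mathrm{im}(f)$ is simultaneously a quotient of $V$ and a sub-object of $W$, hence by part (1) quasi-pure of both weights; since a $q$-Weil number of weight $i$ has archimedean absolute value $q^{i/2}\neq q^{j/2}$, the underlying space of $\mathrm{im}(f)$ must be zero, so $f=0$.

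Part (3) is the substantial step. The plan is first to reduce to the semistable case, and then to use a spreading-out argument to transfer the problem to a global setting over the residue field $k$, where Deligne's Weil II (respectively its $p$-adic counterpart due to Kedlaya) applies. For the reduction, de Jong's alteration theorem produces a smooth proper $X'$ over a finite extension $F'/F$ with semistable reduction and a proper alteration $X' \to X \otimes_F F'$ realising $H^i_\ell(X \otimes_F F')$ as a direct summand of $H^i_\ell(X')$, so that by parts (1) and (2) quasi-purity of $X'$ implies quasi-purity of $X_{F'}$, and in turn of $X$ (by tracking Frobenius eigenvalues across the finite extension $F'/F$). In the semistable case, Proposition \ref{spread} realises $X$ as the fibre at a closed point $c$ of a proper flat family $\cur{Y} \to C$ over a smooth connected curve $C/k$, smooth away from $c$. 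The higher direct image $\mathbf{R}^i f_* \Q_\ell$ (respectively $\mathbf{R}^i f_* \cur{O}_{X/K}^\dagger$ for $\ell = p$, via Matsuda--Trihan) is then a lisse sheaf (respectively overconvergent $F$-isocrystal) on $C \setminus \{c\}$, pure of weight $i$ by Weil II.

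Finally, via the identifications of Propositions \ref{llc} and \ref{plc} (together with \cite[Proposition 5.52]{LP16} in the $p$-adic case), the Weil--Deligne representation $H^i_\ell(X)$ is identified with the Weil--Deligne representation at the missing point $c$ associated to this pure local system on $C \setminus \{c\}$, from which the claimed quasi-purity assertion follows. The hardest step is expected to be precisely this last identification and the transfer of purity from the open subcurve to the missing point: it requires a careful analysis of the interplay between monodromy and Frobenius at the singular fibre, ultimately invoking weight-monodromy in the equicharacteristic setting (known by Deligne for $\ell \neq p$, with the analogous $p$-adic statement following by similar arguments).
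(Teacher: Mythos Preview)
The paper's proof is by citation: parts (1) and (2) are \cite[Lemmas 5.61 and 5.59]{LP16} (with the same argument working for $\ell \neq p$), and part (3) is \cite[Theorem 1.1]{Ito05} for $\ell \neq p$ and \cite[Theorem 5.33]{LP16} for $\ell = p$. Your proposal instead supplies arguments, and for part (3) your sketch is essentially the strategy underlying those references: reduce to the semistable case by alterations, spread out to a family over a curve over $k$, invoke Weil II purity on the open part, and transfer to the bad fibre via weight--monodromy. That outline is correct and is indeed the substance behind the citations.

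For parts (1) and (2), however, your argument treats ``quasi-pure of weight $i$'' as the literal condition that all Frobenius eigenvalues are $q$-Weil numbers of weight $i$. Under that reading your eigenvalue-tracking proof is fine, but the condition then forces $N=0$: from $N\varphi = q\varphi N$, any Frobenius generalised eigenvalue $\lambda$ on which $N$ does not vanish produces another eigenvalue $q^{-1}\lambda$ of weight $i-2$. Hence it cannot hold for $H^i_\ell(X)$ once $X$ has nontrivial monodromy --- already a Tate curve falsifies (3) under that reading. What the cited lemmas (and the application in the proof of Theorem \ref{main22}) actually require is purity in the weight--monodromy sense: Frobenius on $\mathrm{Gr}^M_k V$ has weight $i+k$. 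With that interpretation your argument for (1) and (2) has a genuine gap, because the monodromy filtration is not \emph{a priori} strict for morphisms of Weil--Deligne representations, so one cannot simply pass to subobjects and quotients and read off eigenvalues. The missing idea is that on a pure representation the monodromy filtration coincides with the filtration by archimedean size of Frobenius eigenvalues, and the latter \emph{is} manifestly functorial and strict; this identification is precisely what \cite[Lemmas 5.59, 5.61]{LP16} supply, after which your eigenvalue-counting becomes legitimate.
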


\begin{proof} If $\ell=p$ then (\ref{1}) is \cite[Lemma 5.61]{LP16}, and (\ref{2}) is \cite[Lemma 5.59]{LP16}; exactly the same proof works for $\ell\neq p$. If $\ell\neq p$ then (\ref{3}) follows from \cite[Theorem 1.1]{Ito05}, and if $\ell=p$ from \cite[Theorem 5.33]{LP16}.
\end{proof}

We will also need the following result.

\begin{lemma} \label{ind} Let $F'/F$ be a finite extension (not necessarily Galois or even separable), and let $X'/F'$ be smooth and proper. Let $X$ denote $X'$ considered as a variety over $F$, and fix a prime $\ell$.
\begin{enumerate}
\item \label{i} If the $\ell$-adic cohomology $H^i_\ell(X')$ is quasi-pure of weight $i$ as a $W'_{F'}$-representation, then $H^i_\ell(X)$ is quasi-pure of weight $i$ as a $W'_F$-representation.
\item \label{ii} If $C_{\ell,w}(H^i)$ holds for $X'/F'$, then $C_{\ell,w}(H^i)$ holds for $X/F$. 
\end{enumerate}
\end{lemma}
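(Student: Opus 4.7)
The plan is to reduce to the case where $F'/F$ is finite separable, and then invoke an induction formula for Weil--Deligne representations.

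First I would factor $F'/F$ as $F \subset F_s \subset F'$, where $F_s/F$ is finite separable and $F'/F_s$ purely inseparable. Because $k$ is perfect, the purely inseparable step does not alter the residue field, and restriction to a common separable closure identifies $G_{F'}$ with $G_{F_s}$ compatibly with their Weil and Weil--Deligne groups (both the valuation map and the residue field are preserved). Since $X'$ over $F'$ and over $F_s$ have the same underlying scheme, their \'etale and rigid cohomology groups, as well as the associated Weil--Deligne representations, coincide. Both (i) and (ii) therefore reduce immediately to the case where $F'/F$ is separable.

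Assume from now on that $F'/F$ is finite separable of residual degree $f = [k':k]$, so that $[W_F : W_{F'}] = f$. The key input is an induction formula
\[
H^i_\ell(X) \;\cong\; \mathrm{Ind}_{W'_{F'}}^{W'_F} H^i_\ell(X')
\]
of Weil--Deligne representations, for all primes $\ell$. For $\ell \neq p$ this is classical, coming from the decomposition $X \otimes_F F^{\mathrm{sep}} \cong \coprod_{\sigma \colon F' \hookrightarrow F^{\mathrm{sep}}} X' \otimes_{F',\sigma} F^{\mathrm{sep}}$. For $\ell = p$ one must verify that Marmora's functor intertwines the natural extension of scalars on rigid cohomology from $\rk$ to the Robba ring of $F'$ with induction of Weil--Deligne representations; I expect this to be the main technical obstacle.

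Granted the induction formula, both parts are formal. For (i), choose a Frobenius lift $\varphi \in W_F$ and coset representatives $1, \varphi, \ldots, \varphi^{f-1}$; in the resulting identification $\mathrm{Ind}\,V' \cong \bigoplus_{i=0}^{f-1} V'$, the action of $\varphi$ is a cyclic block shift with the operator $\varphi^f \in W_{F'}$ in the corner. Its eigenvalues are thus the $f$-th roots of the eigenvalues of $\varphi^f$ on $V'$, and any $f$-th root of a $q^f$-Weil number of weight $i$ is a $q$-Weil number of weight $i$, so quasi-purity transfers from $V'$ to $\mathrm{Ind}\,V'$. For (ii), the $N$-action on $\mathrm{Ind}\,V' \cong \bigoplus_i V'$ restricts on each summand to a nonzero scalar multiple of the $N$-action on $V'$ (the scalar coming from the conjugation action of $\varphi^i$ on $\G_a$), so the monodromy filtration splits as a direct sum and $\mathrm{Gr}_k^M(\mathrm{Ind}\,V') \cong \mathrm{Ind}(\mathrm{Gr}_k^M V')$ as $W_F$-representations. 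The character of an induced representation is then given by the standard Frobenius reciprocity formula, which is manifestly independent of $\ell$, so weak $\Q$-compatibility is preserved.
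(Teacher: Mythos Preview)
Your approach matches the paper's: reduce to the separable case and then invoke $H^i_\ell(X)\cong\mathrm{Ind}_{W'_{F'}}^{W'_F}H^i_\ell(X')$. The paper handles the purely inseparable step slightly differently---rather than identifying $W'_{F'}$ with $W'_{F_s}$ directly, it reduces to the single extension $\mathrm{Frob}\colon F\to F$ and argues that $H^i_\ell$ is unchanged via nilpotent thickenings and universal homeomorphisms---but both routes are valid. The paper's version has the minor advantage that invariance of rigid cohomology under nilpotent thickenings and under absolute Frobenius are standard, so the $\ell=p$ case needs no further comment; in your version you should still say a word about why the $p$-adic Weil--Deligne representation of $X'$ is the same over $F'$ and over $F_s$.

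One small slip in the separable case: the index $[W_F:W_{F'}]$ is $[F':F]=ef$, not $f$, so $1,\varphi,\dots,\varphi^{f-1}$ is not a full set of coset representatives unless $F'/F$ is unramified. The fix is routine. For a general set of representatives the characteristic polynomial of $\varphi$ on $\mathrm{Ind}\,V'$ is a product, over $\langle\varphi\rangle$-orbits $O$ on $W_F/W_{F'}$, of factors $\det(1-t^{|O|}\,x^{-1}\varphi^{|O|}x\mid V')$; one checks that $f\mid|O|$ (via the surjection $W_F/W_{F'}\to\Z/f\Z$) and that the return element $x^{-1}\varphi^{|O|}x\in W_{F'}$ maps to $\mathrm{Frob}_{k'}^{|O|/f}$, so its eigenvalues on $V'$ have absolute value $q^{|O|\cdot i/2}$ and their $|O|$-th roots are $q$-Weil numbers of weight $i$. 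Your argument for (ii) is unaffected by this correction.
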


\begin{proof}
It suffices to treat the cases where $F'/F$ is separable, and where $F'=F^{1/p}$. If $F'/F$ is separable, then we may view $W'_{F'} \subset W_F'$ as a finite index subgroup. Then we have that $H^i_\ell(X)=\mathrm{Ind}_{W'_{F'}}^{W_F'}H^i_\ell(X')$ is simply the induced representation, which easily implies both (\ref{i}) and (\ref{ii}).

Next suppose that $F'=F^{1/p}$, or in other words that we are looking at the finite extension $\mathrm{Frob}:F\rightarrow F$. Let $X_1$ be a given smooth and proper variety over $F$, and $X_0$ the $F$-variety by considering $X_1$ with the composite structure morphism $X_1\rightarrow \spec{F} \overset{\spec{\mathrm{Frob}}}{\rightarrow} \spec{F}$. Since the map $X_1\hookrightarrow X_0\otimes_{F,\mathrm{Frob}} F$ is a nilpotent thickening, we have $H^i_\ell(X_1)\cong H^i_\ell(X_0\otimes_{F,\mathrm{Frob}} F)$ as $W'_F$-representations, and since the Frobenius map $X_0\rightarrow X_0\otimes_{F,\mathrm{Frob}} F$ is a universal homeomorphism, it follows that $H^i_\ell(X_0)\cong H^i_\ell(X_0\otimes_{F,\mathrm{Frob}} F)$ as $W'_F$-representations. Hence we can deduce that $H^i_\ell(X_1)\cong H^i_\ell(X_0)$ as $W'_F$-representations, which allows us to conclude.
\end{proof}

With these preliminaries in place, we can now prove Theorem \ref{main22}.

\begin{proof}[Proof of Theorem \ref{main22}]
let $X/F$ be smooth and proper. Then by de Jong's theorem on alterations \cite[Theorem 8.2]{dJ96}, we may choose a proper hypercover $X_\bu\rightarrow X$ and finite extensions $F_n/F$ such that:
\begin{itemize}
\item the structure morphism $X_n\rightarrow \spec{F}$ factors through $\spec{F_n}$;
\item $X_n/F_n$ has strictly semistable reduction.
\end{itemize}
In particular, by applying Theorem \ref{main2}, Proposition \ref{qpwd}(\ref{3}) and Lemma \ref{ind} we can see that the cohomology groups $H^i_\ell(X_n)$, where $X_n$ is viewed as an $F$-variety, are quasi-pure of weight $i$, and moreover that they satisfy weak $\ell$-independence. We now consider the spectral sequence
\[ E_1^{n,i} = H_\ell^i(X_n)\Rightarrow H_\ell^{i+n}(X).  \]
Since $E_1^{n,i}$ is quasi-pure of weight $i$, it follows from Proposition \ref{qpwd}(\ref{1}) that $E_2^{i,n}$ is also quasi-pure of weight $n$, and hence from Proposition \ref{qpwd}(\ref{2}) that the spectral sequence degenerates at $E_2$.

We will now consider the induced filtration $P_\bu$ on $H^{i+n}_\ell(X)$, whose associated graded $\mathrm{Gr}_i^PH^{i+n}_\ell(X)=E_2^{n,i} $ is therefore quasi-pure of weight $i$. If we look at the quotient map
\[ H^{i+n}_\ell(X) \rightarrow \mathrm{Gr}_{i+n}^P = E_2^{0,i+n} \]
then it follows from Proposition \ref{qpwd}(\ref{1}) that the kernel $P_{i+n-1}H^{i+n}_\ell(X)$ is quasi-pure of weight $i+n$, and hence that the quotient map
\[ P_{i+n-1}H^{i+n}_\ell(X) \twoheadrightarrow \mathrm{Gr}_{i+n-1}^P  = E_2^{1,i+n-1} \]
must be zero by Proposition \ref{qpwd}(\ref{2}). In particular, we have $P_{i+n-1}H^{i+n}_\ell(X)=P_{i+n-2}H^{i+n}_\ell(X)$, and again applying Proposition \ref{qpwd}(\ref{2}) we can see that the quotient
\[ P_{i+n-2}H^{i+n}_\ell(X) \twoheadrightarrow \mathrm{Gr}_{i+n-2}^P  = E_2^{2,i+n-2} \]
must be zero. Continuing thus, we find that $P_{i+n-j}H^{i+n}_\ell(X)=P_0H^{i+n}_\ell(X)=0$ for all $j>0$, from which we deduce that the $E_2$-page of the spectral sequence only has non-zero terms in the first column. Put differently, we have a long exact sequence
\[ 0\rightarrow H^{i+n}_\ell(X)\rightarrow H^{i+n}_\ell(X_0)\rightarrow H^{i+n}_\ell(X_1)\rightarrow H^{i+n}_\ell(X_2)\rightarrow \ldots \]
for all $i,n,\ell$. Since we know weak $\ell$-independence for all the terms $H^{i+n}_\ell(X_n)$, we can therefore deduce weak $\ell$-independence for $H^{i+n}_\ell(X)$.
\end{proof}

\section{\texorpdfstring{$L$}{L}-functions and the proof of Proposition \ref{gllo}}\label{dp}

The purpose of this section is to prove Proposition \ref{gllo}, as well as give an application of Theorem \ref{main22} showing that the Hasse--Weil $L$-functions of smooth and proper varieties over global function fields (including those defined using $p$-adic cohomology) are independent of $\ell$. The basic idea is that away from $\ell=p$ Proposition \ref{gllo} follows from \cite[Th\'eor\`eme 9.8]{Del73}, and in fact Deligne's proof also works at $\ell=p$ once certain facts about $p$-adic cohomology of smooth curves are in place. The deduction of $\ell$-independence for the Hasse--Weil $L$-function is then entirely straightforward. In fact, it is not entirely accurate to describe this as an application of Theorem \ref{main22}, since it is in fact a version of $\ell$-independence for local $L$-functions that forms a key component of the \emph{proof} of Proposition \ref{gllo}, and therefore of Theorem \ref{main22}. However, we thought it worth isolating this special case. All the results presented in this section are more or less special cases of more general results of Abe--Caro proved in \cite{AC14}, however, in the simple case that we are interested in one can give proofs using simpler machinery from the theory of arithmetic $\cur{D}$-modules on curves and formal curves, in particular that from \cite{Car06b} and \cite{Cre12}.  

Since the situation we are interested in in this section is somewhat different to that in the rest of the article, we will change notation slightly (only for this section, we will return to the usual notations in the next section). So we will let $F$ be a global function field with field of constants $k$, and let $X$ be a smooth, projective model for $F$. We will let $U$ be an open subset of $X$, and let $S=X\setminus U$. For any place $v\in \norm{X}$ (with some fixed local parameter $t_v$) we will let $k_v$ denote the residue field at $v$, $K_v$ the unique unramified extension of $K$ with residue field $k_v$, $W_v$ its ring of integers and $\cur{R}_v$ a copy of the Robba ring at $v$, considered as a subring of $K_v\pow{t_v,t_v^{-1}}$. Let $\varphi_v$ denote a $\#k_v$-power Frobenius on $\cur{R}_v$. We will also denote $\#k_v$-power Frobenius operators by $F_v$.

If we are given an overconvergent $F$-isocrystal $\cur{F}\in F\text{-}\isoc{U/K}$ and a point $v\in X$ we can consider Tsuzuki's functor
\[ \mathbf{i}_v^*: F\text{-}\isoc{U/K} \rightarrow \underline{\mathbf{M}\Phi}_{\cur{R}_v}^\nabla \]
to the category of $(\varphi_v,\nabla)$-modules over $\cur{R}_v$, and therefore define the local $L$-factor of $\cur{F}$ at $v$ by
\[ L_v(\cur{F},t):=\det(1-t^{\deg v} \varphi_v\mid (\mathbf{i}_v^*\cur{F})^{\nabla=0}).\]
The global $L$-function of $\cur{F}$ is then defined to be
\[ \widehat{L}(\cur{F},t):= \prod_{v\in \norm{X}} L_v(\cur{F},t), \]
the notation $\widehat{L}$ used here is in order to distinguish it from the Etesse--Le Stum $L$-function $L_{\mathrm{EL}}$ defined in \cite{ELS93}, which is defined by only considering local $L$-factors at points of $U$. Note that at places $v\in U$ the local $L$-factor can also be described (by Dwork's trick) as the inverse characteristic polynomial $\det(1-t^{\deg v}F^{\deg v} \mid \cur{F}_v)$ of the linearised Frobenius of $\cur{F}$ acting on the stalk $\cur{F}_v$ of $\cur{F}$ at $v$. Our first task will be to prove a function equation for $\widehat{L}(\cur{F},t)$, and to do so we will need to interpret the local factors $L_v(\cur{F},t)$ in terms of arithmetic $\cur{D}$-modules on $X$.

So we will denote by $\cur{X}$ a lift of $X$ to a smooth projective formal curve over $W$, by $\cur{D}^\dagger_{\cur{X},\Q}$ Berthelot's ring of arithmetic differential operators on $\cur{X}$, and by $F\text{-}\mathrm{Mod}_{\mathrm{hol}}(\cur{D}^\dagger_{\cur{X},\Q})$ (resp. $F\text{-}D^b_\mathrm{hol}(\cur{D}^\dagger_{\cur{X},\Q})$) the category of holonomic $F\text{-}\cur{D}^\dagger_{\cur{X},\Q}$-modules (resp. the bounded derived category of $\cur{D}^\dagger_{\cur{X},\Q}$-modules equipped with Frobenius structure, whose cohomology sheaves are holonomic). For any complex $\cur{K}\in F\text{-}D^b_\mathrm{hol}(\cur{D}^\dagger_{\cur{X},\Q})$ we will denote by $L(\cur{K},t)$ the $L$-function of $\cur{K}$ as defined in the introduction to \cite{Car06b}; let us quickly recall his definition. For $v\in \norm{X}$ we may lift $v$ to a morphism $i_v:\spf{W_v}\rightarrow \cur{X}$, and for any $\cur{K}\in F\text{-}D^b_\mathrm{hol}(\cur{D}^\dagger_{\cur{X},\Q})$ we can consider the pullback $i_v^+\cur{K}\in F\text{-}D^b_\mathrm{hol}(\cur{D}^\dagger_{\spf{W_v},\Q})$. Its cohomology sheaves are therefore $F$-isocrystals over $K_v$, we may therefore consider $F^{\deg v}$ as a $K$-linear operator on these cohomology sheaves. Caro then defines
\begin{align*} L_v(\cur{K},t)&:= \prod_{i\in \Z} \textstyle{\det_K}(1-t^{\deg v}F^{\deg v}|H^i(i_v^+\cur{K}) )^{\frac{(-1)^{r}}{\deg v}} \\
L(\cur{K},t)&:=\prod_{v\in \norm{X}} L_v(\cur{K},t).\end{align*} 
The functional equation that we require will follow from the existence of the `intermediate extension' functor
\[ j_{!+}:F\text{-}\mathrm{Isoc}(U/K)\rightarrow F\text{-}\mathrm{Mod}_{\mathrm{hol}}(\cur{D}^\dagger_{\cur{X},\Q}) \]
which will be such that:
\begin{enumerate} \item $\widehat{L}(\cur{F},t)=L(j_{!+}\cur{F},t)$;
\item $\mathbf{D}(j_{!+}\cur{F})\cong j_{!+}(\cur{F}^\vee)$, where $\mathbf{D}$ is the dual functor for $\cur{D}$-modules, and $(-)^\vee$ that for isocrystals. 
\end{enumerate}
As noted at the beginning of this section, this is a special case of much more general results of \cite{AC14}, however, we thought it worthwhile to go through the construction in detail in the simple case that we are interested in. To construct $j_{!+}$, first let us consider Berthelot's ring $\cur{D}^\dagger_{\cur{X},\Q}(^\dagger S)$ of arithmetic differential operators on $\cur{X}$ with overconvergent singularities along $S$, and the corresponding categories $F\text{-}\mathrm{Mod}_\mathrm{hol}(\cur{D}^\dagger_{\cur{X},\Q}(^\dagger S))$ and $F\text{-}D^b_\mathrm{hol}(\cur{D}^\dagger_{\cur{X},\Q}(^\dagger S))$ of (complexes of) holonomic $F\text{-}\cur{D}^\dagger_{\cur{X},\Q}(^\dagger S)$-modules. Then thanks to \cite[Th\'eor\`eme 2.3.3]{Car06b} we may consider any $\cur{F}\in F\text{-}\isoc{U/K}$ as a holonomic $F\text{-}\cur{D}^\dagger_{\cur{X},\Q}(^\dagger S)$-module, which we shall do. We have canonical functors
\begin{align*}  j_+: F\text{-}D^b_\mathrm{hol}(\cur{D}^\dagger_{\cur{X},\Q}(^\dagger S)) &\rightarrow F\text{-}D^b_\mathrm{hol}(\cur{D}^\dagger_{\cur{X},\Q}) \\
 j^+: F\text{-}D^b_\mathrm{hol}(\cur{D}^\dagger_{\cur{X},\Q}) &\rightarrow F\text{-}D^b_\mathrm{hol}(\cur{D}^\dagger_{\cur{X},\Q}(^\dagger S))
 \end{align*}
given by restriction and extension of scalars along $\cur{D}^\dagger_{\cur{X},\Q}\rightarrow \cur{D}^\dagger_{\cur{X},\Q}(^\dagger S)$ respectively, these are both exact for the natural $t$-structures on both sides. Define $j_!$ to be $\mathbf{D}j_+\mathbf{D}$ where $\mathbf{D}$ is the duality functor, note that this is exact for holonomic $F\text{-}\cur{D}$-modules by (the proof of) \cite[Proposition III.4.4]{Vir00}. By \cite[Proposition I.4.4]{Vir00} we have $j^+\mathbf{D}=\mathbf{D}j^+$ and hence we get adjoint pairs $(j^+,j_+)$ and $(j_!,j^+)$. Also by \cite[1.1.8]{Car06b} we have $j^+j_!\cong j^+j_+\cong \mathrm{id}$. 

The closed immersion $i:S\rightarrow X$ lifts to a closed immersion $i:\cur{S}\rightarrow \cur X$ and hence we obtain Berthelot's pullback and pushforward functors 
\begin{align*} i^!:F\text{-}D^b_\mathrm{hol}(\cur{D}^\dagger_{\cur{X},\Q}) &\rightarrow F\text{-}D^b_\mathrm{hol}(\cur{D}^\dagger_{\cur{S},\Q}) \\
i_+:F\text{-}D^b_\mathrm{hol}(\cur{D}^\dagger_{\cur{S},\Q}) &\rightarrow F\text{-}D^b_\mathrm{hol}(\cur{D}^\dagger_{\cur{X},\Q}),
\end{align*}
we define $i^+=\mathbf{D}i^!\mathbf{D}$. It follows from \cite[Th\'eor\`eme 4.3.13]{Ber02} that $i_+\mathbf{D}=\mathbf{D}i_+$ and hence by \cite[Th\'eor\`eme 1.2.21]{Car06b} we have two adjoint pairs $(i^+,i_+)$ and $(i_+,i^!)$. Also by \cite[Th\'eor\`eme 5.3.3]{Ber02} we get $i^!i_+\cong i^+i_+\cong \mathrm{id}$. By \cite[(1.1.6.5)]{Car06b} we have, for any $\cur{E}\in F\text{-}D^b_\mathrm{hol}(\cur{D}^\dagger_{\cur{X},\Q})$, exact triangles 
\begin{align*} j_!j^+\cur{E} \rightarrow &\cur{E} \rightarrow i_+i^+\cur{E}\overset{+1}{\rightarrow }  \\
i_+i^!\cur{E} \rightarrow &\cur{E} \rightarrow j_+j^+\cur{E}\overset{+1}{\rightarrow }. \end{align*}
In particular taking $\cur E=j_+ \cur F$ we get an exact triangle $j_!\cur{F} \rightarrow j_+\cur{F} \rightarrow i_+i^+j_+\cur{F}\overset{+1}{\rightarrow } $, and we define $j_{!+}\cur{F}:=\mathrm{im}(j_!\cur{F} \rightarrow j_+\cur{F})$, note that this makes sense as a $\cur{D}_{\cur{X},\Q}^\dagger$-module because $j_+$ and $j_!$ are exact. By exactness of $\mathbf{D}$ for holonomic $F\text{-}\cur{D}^\dagger$-modules we have $\mathbf{D}j_{!+}\cur{F}\cong j_{!+}\mathbf{D}\cur{F}$, and hence using the main result of \cite{Car05} we deduce that $\mathbf{D}(j_{!+}\cur{F})\cong j_{!+}(\cur{F}^\vee)$. To compare the $L$-functions of $\cur{F}$ and $j_{!+}\cur F$ essentially boils down to the following local calculation from \cite{Cre12}.

\begin{proposition} \label{locallf} Let $v\in X$, and let $i_v: \spf{W_v}\rightarrow \cur{X}$ be a lifting of the inclusion $v\rightarrow X$. 
\begin{enumerate} \item If $v\in U$ then $(i_v^+j_{!+}\cur{F})[-1]\cong \cur{F}_v$ as $F_v$-isocrystals over $K_v$. 
\item If $v\in S$ then $(i_v^+j_{!+}\cur{F})[-1]\cong (\mathbf{i}_v^*\cur{F})^{\nabla=0}$ as $F_v$-isocrystals over $K_v$.
\end{enumerate}
\end{proposition}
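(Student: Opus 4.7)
The plan is to reduce both parts to a local calculation on a formal neighbourhood of $v$, and then (for part~(2)) to apply Crew's description in \cite{Cre12} of holonomic $F\text{-}\cur{D}^\dagger$-modules on smooth formal curves in terms of $(\varphi,\nabla)$-modules over the Robba ring.

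For part~(1), the key observation is that $j_{!+}\cur{F}$ agrees with $\cur{F}$ on $U$. Indeed, applying the exact functor $j^+$ to the factorisation $j_!\cur{F}\twoheadrightarrow j_{!+}\cur{F}\hookrightarrow j_+\cur{F}$ and using $j^+j_!\cong j^+j_+\cong \mathrm{id}$ yields $j^+j_{!+}\cur{F}\cong \cur{F}$. Since $v\in U$, the closed immersion $i_v$ factors through $j$, and hence $i_v^+j_{!+}\cur{F}$ is just Berthelot's extraordinary pullback of $\cur{F}$, viewed as a smooth $F\text{-}\cur{D}^\dagger_{\cur{X},\Q}(^\dagger S)$-module, along a closed point of a smooth formal curve. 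With the usual shift conventions this is $\cur{F}_v[1]$ as an $F_v$-isocrystal over $K_v$, so that $(i_v^+j_{!+}\cur{F})[-1]\cong \cur{F}_v$.

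For part~(2) we work on the formal disk $\cur{V}:=\spf{W_v\pow{t_v}}$; both sides of the desired isomorphism depend only on the restriction of $\cur{F}$ to a punctured formal neighbourhood of $v$. Under Crew's equivalence this restriction corresponds to the $(\varphi_v,\nabla)$-module $M:=\mathbf{i}_v^*\cur{F}$ over $\cur{R}_v$, and the functors $j_+$, $j_!$, $i_v^+$ all admit explicit Robba-ring descriptions. The content of the calculation is then to show that in this dictionary the canonical map $j_!\cur{F}\to j_+\cur{F}$ corresponds to a map of $(\varphi,\nabla)$-modules whose image is the minimal $\cur{D}^\dagger$-extension of $M$, and whose pullback along $i_v^+[-1]$ recovers precisely $M^{\nabla=0}$.

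The cleanest route is to combine the two localisation triangles
\[ j_!j^+j_{!+}\cur{F}\to j_{!+}\cur{F}\to i_+i^+j_{!+}\cur{F}\overset{+1}{\to},\qquad i_+i^!j_{!+}\cur{F}\to j_{!+}\cur{F}\to j_+j^+j_{!+}\cur{F}\overset{+1}{\to} \]
with the self-duality $\mathbf{D}j_{!+}\cur{F}\cong j_{!+}(\cur{F}^\vee)$, which makes $i^!j_{!+}\cur{F}$ and $i^+j_{!+}\cur{F}$ Verdier dual. This reduces the problem to identifying the maximal trivial sub-$(\varphi,\nabla)$-module of $M$ at $v$, which is by definition $M^{\nabla=0}$. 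The main obstacle is the careful bookkeeping of shifts and Frobenius structures, together with the translation between Caro's arithmetic $\cur{D}$-module formalism and Crew's Robba-ring formalism; but the underlying geometric picture is transparent, in that the intermediate extension discards the non-trivial monodromic part of $\cur{F}$ at $v$ and remembers only its horizontal part.
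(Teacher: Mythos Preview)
Your overall strategy for both parts matches the paper's: part~(1) is handled essentially as you describe (the paper simply cites \cite[Proposition 3.3.1]{Car06b} together with \cite{Car05}), and for part~(2) the paper also reduces to the formal disk $\mathfrak{X}_v=\spf{W_v\pow{t_v}}$ and then appeals to Crew's results in \cite{Cre12}. However, two steps in your treatment of part~(2) are not adequately justified.

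First, the assertion that ``both sides depend only on the restriction of $\cur{F}$ to a punctured formal neighbourhood of $v$'' is precisely the non-trivial content of the reduction and cannot be taken for granted: one must check that the intermediate extension commutes with pullback along $\mathbf{i}_v:\mathfrak{X}_v\to\cur{X}$. The paper isolates this as a separate claim, namely that $\mathbf{i}_v^*j_+\cur{E}\cong j_{v+}\mathbf{i}_v^*\cur{E}$ for coherent $\cur{D}^\dagger_{\cur{X},\Q}(^\dagger S)$-modules $\cur{E}$, and proves it by descending to the level of $\cur{D}^{(m)}_{X_n}(S)$-modules and verifying an explicit tensor-product identity. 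Combined with $\mathbf{D}\mathbf{i}_v^*\cong\mathbf{i}_v^*\mathbf{D}$ (from \cite[Proposition I.4.4]{Vir00}) and flatness of $\mathbf{i}_v^{-1}\cur{D}^\dagger_{\cur{X},\Q}\to\cur{D}^\dagger_v$, this yields $\mathbf{i}_v^*(j_{!+}\cur{E})\cong j_{v!+}(\mathbf{i}_v^*\cur{E})$, which is what makes the localisation rigorous.

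Second, once one is on $\mathfrak{X}_v$ with $M=\mathbf{i}_v^*\cur{F}$, your proposed route via the \emph{two} localisation triangles plus self-duality remains at the level of a sketch; it is not clear how the Verdier duality between $i^!j_{!+}\cur{F}$ and $i^+j_{!+}\cur{F}$ actually pins down $(i_v^+j_{v!+}M)[-1]$ as $M^{\nabla=0}$ without further input. The paper's argument here is more direct and avoids duality entirely: by \cite[Proposition 5.1.3]{Cre12} the complex $i_v^+j_{v+}M$ is concentrated in degrees $-1$ and $0$, so the triangle $j_{v!}M\to j_{v+}M\to i_{v+}i_v^+j_{v+}M\overset{+1}{\to}$ yields a short exact sequence
\[ 0\to j_{v!+}M\to j_{v+}M\to i_{v+}H^0(i_v^+j_{v+}M)\to 0 \]
of $F\text{-}\cur{D}^\dagger_v$-modules. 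Applying $i_v^+$ and taking the long exact sequence shows that $i_v^+j_{v!+}M$ is concentrated in degree $-1$ with $(i_v^+j_{v!+}M)[-1]\cong H^{-1}(i_v^+j_{v+}M)$, and a second application of \cite[Proposition 5.1.3]{Cre12} identifies the latter with $(M\otimes\cur{R}_v)^{\nabla=0}$.
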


\begin{remark}\begin{enumerate}\item In both cases,  $i_v^+j_{!+}\cur{F}$ is naturally an $F$-isocrystal, taking a suitable power of the Frobenius allows us to regard it as a $F_v$-isocrystal. In the second case, $\mathbf{i}_v^*\cur{F}$ is the $(\varphi_v,\nabla)$-module over $\cur{R}_v$ associated to $\cur{F}$ as above. Note also that implicit in both claims is that $i_v^+j_{!+}\cur{F}$ is concentrated in degree $-1$.
\item The proposition implies that the local $L$-factors of $\cur F$ and $j_{!+}\cur F$ agree at all places of $X$, hence that $\widehat{L}(\cur F,t)=L(j_{!+}\cur F,t)$.
\item By Dwork's trick, the formula in the second case is in fact also valid in the first.
\end{enumerate}
\end{remark}

\begin{proof} The first case is noted in the proof of \cite[Proposition 3.3.1]{Car06b} (in combination with the main result from \cite{Car05}), we will consider the second. The key point is to show that we may calculate $j_!$ and $j_+$ entirely locally. So let $\mathbf{i}_v: \mathfrak{X}_v:=\spf{W_v\pow{t_v}}\rightarrow \cur X$ be a lifting of the map $\spf{k_v\pow{t_v}}\rightarrow X$ of the completed local ring at $v$ into $X$ (previously, $\mathbf{i}_v^*$ was simply notation, however shortly we shall see how to interpret it as a genuine pullback via $\mathbf{i}_v$). Let $\cur{D}^\dagger_v$ denote the ring of arithmetic differential operators on $\mathfrak{X}_v$ as constructed by Crew in \cite{Cre12}, $\cur{D}^\dagger_v(v)$ the version with overconvergent singularities along the closed point of $\mathfrak{X}_v$. Then (unlike the usual case in $\cur{D}$-module theory), the map $\mathbf{i}_v$ is actually a map of ringed spaces $(\mathfrak{X}_v,\cur{D}_v^\dagger)\rightarrow (\cur X,\cur{D}_{\cur X,\Q}^\dagger)$ (resp. $(\mathfrak{X}_v,\cur{D}_v^\dagger(v))\rightarrow (\cur X,\cur{D}_{\cur X,\Q}^\dagger(^\dagger S))$) and hence we have a natural pullback functors
\begin{align*}  \cur E &\mapsto \cur{E} \otimes_{\cur{D}_{\cur X,\Q}^\dagger} \cur{D}_v^\dagger =:\mathbf{i}_v^*\cur E \\
\cur E &\mapsto \cur{E} \otimes_{\cur{D}_{\cur X,\Q}^\dagger(^\dagger S)} \cur{D}_v^\dagger(v) =:\mathbf{i}_v^*\cur E \end{align*}
from holonomic $F\text{-}\cur{D}_{\cur X,\Q}^\dagger$-modules to holonomic $F\text{-}\cur{D}_v^\dagger$-modules (resp. $F\text{-}\cur{D}_{\cur X,\Q}^\dagger(^\dagger S)$-modules to $F\text{-}\cur{D}_v^\dagger(v)$-modules). Note that we now have $\mathbf{i}_v^*$ denoting three distinct functors, however, we shall shortly see that they are all compatible, so no essential confusion should arise.

As in the global case, we may define functors $j_{v+}$ and $j_v^+$ to be restriction and extension of scalars respectively along the natural map $\cur{D}^\dagger_v\rightarrow \cur{D}^\dagger_v(v)$, as well as a duality functor $\mathbf{D}$ using the machinery of \cite{Vir00}. Defining $j_{v!}=\mathbf{D}j_{v+}\mathbf{D}$ we then get adjoint pairs $(j_v^+,j_{v+})$ and $(j_{v!},j_v^+)$ as before, and hence we can define $j_{v!+}\cur{E}=\mathrm{im}(j_{v!}\cur{E}\rightarrow j_{v+}\cur{E})$ for holonomic $F\text{-}\cur{D}^\dagger_v(v)$-module $\cur E$. Applying \cite[Proposition I.4.4]{Vir00} to the map of rings $\mathbf{i}_v^{-1}\cur{D}_{\cur X,\Q}^\dagger \rightarrow \cur{D}_v^\dagger$ (resp. $\mathbf{i}_v^{-1}\cur{D}_{\cur X,\Q}^\dagger(^\dagger S) \rightarrow \cur{D}_v^\dagger(v)$) we get $\mathbf{D}\mathbf{i}_v^*\cong \mathbf{i}_v^*\mathbf{D}$. 

\begin{claimu} For any coherent $\cur{D}_{\cur{X},\Q}^\dagger(^\dagger S)$-module $\cur E$ we have
\[ \mathbf{i}_v^*j_+\cur E\cong j_{v+}\mathbf{i}_v^*\cur E.\]
\end{claimu}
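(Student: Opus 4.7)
The plan is to reduce the claim to a ring-theoretic identification, using the standard translation of pullback and restriction of scalars into tensor products. Specifically, $j_+$ and $j_{v+}$ are restriction of scalars along the inclusions $\cur{D}^\dagger_{\cur{X},\Q}\hookrightarrow\cur{D}^\dagger_{\cur{X},\Q}(^\dagger S)$ and $\cur{D}^\dagger_v\hookrightarrow\cur{D}^\dagger_v(v)$, while $\mathbf{i}_v^*$ is given by tensor product along the corresponding ring map. Unwinding definitions, the claim therefore amounts to producing a natural isomorphism of $\cur{D}^\dagger_v$-modules
\[ \mathbf{i}_v^{-1}\cur{E}\otimes_{\mathbf{i}_v^{-1}\cur{D}^\dagger_{\cur{X},\Q}}\cur{D}^\dagger_v \;\cong\; \mathbf{i}_v^{-1}\cur{E}\otimes_{\mathbf{i}_v^{-1}\cur{D}^\dagger_{\cur{X},\Q}(^\dagger S)}\cur{D}^\dagger_v(v) \]
for every coherent $\cur{D}^\dagger_{\cur{X},\Q}(^\dagger S)$-module $\cur{E}$.

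First I would write down the natural map from left to right, sending $e\otimes d\mapsto e\otimes d$; this is well defined by commutativity of the evident square of ring maps linking $\mathbf{i}_v^{-1}\cur{D}^\dagger_{\cur{X},\Q}$, $\mathbf{i}_v^{-1}\cur{D}^\dagger_{\cur{X},\Q}(^\dagger S)$, $\cur{D}^\dagger_v$ and $\cur{D}^\dagger_v(v)$. Since both sides of the desired identity are right exact and additive in $\cur{E}$, a five-lemma argument using a local finite presentation of $\cur{E}$ over $\cur{D}^\dagger_{\cur{X},\Q}(^\dagger S)$ reduces matters to the single case $\cur{E} = \cur{D}^\dagger_{\cur{X},\Q}(^\dagger S)$, where the claim collapses to the ring-theoretic identity
\[ \mathbf{i}_v^{-1}\cur{D}^\dagger_{\cur{X},\Q}(^\dagger S)\otimes_{\mathbf{i}_v^{-1}\cur{D}^\dagger_{\cur{X},\Q}}\cur{D}^\dagger_v \;\cong\; \cur{D}^\dagger_v(v). \]

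This identity is the main obstacle. To establish it I would appeal to the description of each side as obtained from the corresponding plain ring of arithmetic differential operators by adjoining functions with overconvergent singularities along the relevant closed subscheme -- along $S$ for Berthelot's $\cur{D}^\dagger_{\cur{X},\Q}(^\dagger S)$, and along the closed point of $\mathfrak{X}_v$ for Crew's $\cur{D}^\dagger_v(v)$. After cancelling the common factor $\cur{D}^\dagger_v$ on both sides, the identity reduces to the purely local statement that the ring of overconvergent functions at the closed point of $\mathfrak{X}_v$ is obtained from $\cur{O}_{\mathfrak{X}_v,\Q}$ by inverting $t_v$ in an overconvergent fashion, which matches what one gets by base-changing $\mathbf{i}_v^{-1}\cur{O}_{\cur{X},\Q}(^\dagger S)$ from $\mathbf{i}_v^{-1}\cur{O}_{\cur{X},\Q}$ to $\cur{O}_{\mathfrak{X}_v,\Q}$. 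This is a direct growth-rate calculation with power series in $t_v$.

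The primary difficulty lies in making the last comparison rigorous, since it requires lining up Berthelot's global definition of overconvergence along $S$ with Crew's local definition at the closed point of $\mathfrak{X}_v$ in a way that respects the action of all of the arithmetic differential operators (not just the functions). Once this is done, however, the remaining verifications are purely formal manipulations of tensor products, and the claim for general coherent $\cur{E}$ follows from the rank-one case by the reduction sketched above.
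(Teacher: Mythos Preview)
Your approach is correct and in fact lands on exactly the same ring-theoretic identity as the paper: after reducing (by a presentation argument) to $\cur{E}=\cur{D}^\dagger_{\cur{X},\Q}(^\dagger S)$, everything hinges on
\[
\cur{D}^\dagger_v(v)\;\cong\;\cur{D}^\dagger_v\otimes_{\mathbf{i}_v^{-1}\cur{D}^\dagger_{\cur{X},\Q}}\mathbf{i}_v^{-1}\cur{D}^\dagger_{\cur{X},\Q}(^\dagger S).
\]
Where you diverge from the paper is in how you propose to check this. You aim to verify it directly at the $\dagger$-level by a growth-rate computation in power series in $t_v$, and you correctly flag that the delicate point is matching Berthelot's overconvergence condition along $S$ with Crew's along the closed point of $\mathfrak{X}_v$, compatibly with the full differential-operator action. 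The paper sidesteps this analytic comparison entirely: it reduces ``in the usual manner'' to coherent $\cur{D}^{(m)}_{X_n}(S)$-modules at finite level $m$ and modulo $p^{n+1}$, where by Berthelot's construction one has the purely algebraic description
\[
\cur{D}^{(m)}_{X_n}(S)=\cur{B}(t_S,p^{m+1})\otimes\cur{D}^{(m)}_{X_n},
\qquad
\cur{D}^{(m)}_{v,n}(v)=\cur{B}(t_v,p^{m+1})\otimes\cur{D}^{(m)}_{v,n},
\]
so that the desired base-change identity becomes a one-line verification of tensor products of ordinary (non-topological) rings. Passing back to the limit then gives the identity at the $\dagger$-level for free.

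Both routes work; yours is more hands-on but requires genuine care with completed tensor products and convergence radii, while the paper's finite-level reduction converts the ``primary difficulty'' you identified into a triviality.
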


\begin{proof}[Proof of Claim] In the usual manner we may reduce to the case of coherent $\cur{D}_{X_n}^{(m)}(S)$-modules $\cur E$, where $X_n=\cur X\otimes_{W} W/p^{n+1}$ and
\[ \cur{D}_{X_n}^{(m)}(S):= \cur B(t_S,p^{m+1} )\otimes \cur{D}_{X_n}^{(m)}\]
is the ring of differential operators constructed in \cite[\S4.2.5]{Ber96a} (here $t_S$ is a section of $\cur{O}_{\cur X}$ cutting out $S$). Defining the local version
\[ \cur{D}_{v,n}^{(m)}(v) := \cur B(t_v,p^{m+1} )\otimes \cur{D}_{v,n}^{(m)} \]
entirely similarly, one easily verifies the isomorphism
\[ \cur{D}_{v,n}^{(m)}(v)\cong \cur{D}_{v,n}^{(m)}\otimes_{\mathbf{i}_v^{-1}\cur{D}_{X_n}^{(m)}} \mathbf{i}_v^{-1}\cur{D}_{X_n}^{(m)}(S) \]
and the claim follows.
\end{proof}

Combining this with flatness of $\mathbf{i}_v^{-1}\cur{D}_{\cur X,\Q}^\dagger\rightarrow \cur{D}^\dagger_v$ we can see that we get an isomorphism
\[ j_{v!+}(\mathbf{i}_v^*\cur{E}) \cong \mathbf{i}_v^*(j_{!+}\cur{E})\]
for any holonomic $F\text{-}\cur{D}^\dagger_{\cur{X},\Q}(^\dagger S)$-module $\cur E$. Let $i_v:\spf{W_v} \rightarrow \cur X$ denote the composition of $\mathbf{i}_v$ with the inclusion of the zero section $\spf{W_v}\rightarrow \mathfrak{X}_v$, so that $i_v^+$ factors through $\mathbf{i}_v^*$. If we let $\cur{E}^\dagger_v\subset \cur{R}_v$ denote a copy of the bounded Robba ring at $v$, then the pullback functor
\[ \mathbf{i}_v^*: F\text{-}\mathrm{Isoc}^\dagger(U/K) \rightarrow \underline{\mathbf{M}\Phi}_{\cur{R}_v}^\nabla \]
factors through the category $\underline{\mathbf{M}\Phi}_{\cur{E}_v^\dagger}^\nabla$ of $(\varphi_v,\nabla)$-modules over $\cur{E}_v^\dagger$, and the diagram
\[ \xymatrix{ F\text{-}\mathrm{Isoc}^\dagger(U/K) \ar[r]^-{\mathbf{i}_v^*}\ar[d] &  \underline{\mathbf{M}\Phi}_{\cur{E}_v^\dagger}^\nabla\ar[d] \\  F\text{-}\mathrm{Mod}_{\mathrm{hol}}(\cur{D}^\dagger_{\cur{X},\Q}(^\dagger S)) \ar[r]^-{\mathbf{i}_v^*} &  F_v\text{-}\mathrm{Mod}_{\mathrm{hol}}(\cur{D}^\dagger_{v}(v))  } \]
commutes, where the right hand vertical arrow is that viewing a $(\varphi_v,\nabla)$-module as a holonomic $F_v\text{-}\cur{D}^\dagger_{v}(v)$-module. The upshot of all these complicated trivialities is that to prove (2), we may reduce to the local case; in other words we may replace $\cur X$ by $\mathfrak{X}_v$.

We may therefore suppose that we are given a $(\varphi_v,\nabla)$-module $M$ over $\cur{E}_v^\dagger$, and we must show that
\[ ( i_v^+j_{v!+} M )[-1] \cong (M \otimes \cur{R}_v)^{\nabla=0} \]
as $F_v$-isocrystals over $K_v$. Note that by \cite[Proposition 5.1.3]{Cre12} the complex $i_v^+j_+M$ is concentrated in degrees 0 and -1. From the exact triangle $j_{v!}M \rightarrow j_{v+}M\rightarrow i_{v+}i_v^{+}j_{v+}M\overset{+1}{\rightarrow}$ we therefore deduce the exact sequence
\[ 0 \rightarrow j_{v!+}M \rightarrow j_{v+}M\rightarrow i_{v+}H^0(i_v^+j_{v+}M)\rightarrow 0 \]
of $F\text{-}\cur{D}^\dagger_v$-modules. Now applying $i_v^+$ and looking at the long exact sequence in cohomology we can see that $i_v^+j_{v!+}M$ is concentrated in degree $-1$ and that $i_v^+j_{!+}M[-1]\cong H^{-1}(i_v^+j_{v+}M)$. Now the claim follows from again applying \cite[Proposition 5.1.3]{Cre12}.
\end{proof}

Hence applying Poincar\'e duality for $F\text{-}\cur{D}^\dagger_{\cur X,\Q}$-modules \cite[Theorem 1.3.5]{Car06b} together with the cohomological formula for $L$-functions \cite[Th\'eor\`eme 3.3.4]{Car06b} gives the following.

\begin{corollary}  For any $\cur{F}\in F\text{-}\isoc{U/K}$ we have
\[ \widehat{L}(\cur{F},t)= \epsilon(\cur{F},t)\widehat{L}(\cur{F}^\vee(1),t^{-1})\]
where $\epsilon(\cur{F},t)$ is a monomial in $t$.
\end{corollary}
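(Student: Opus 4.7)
The plan is to combine Proposition \ref{locallf} (comparing the $L$-function of $\cur F$ with that of $j_{!+}\cur F$), the duality property $\mathbf{D}(j_{!+}\cur F)\cong j_{!+}(\cur F^\vee)$ established above, and then apply Poincar\'e duality \cite[Theorem 1.3.5]{Car06b} together with the cohomological expression for the $L$-function \cite[Th\'eor\`eme 3.3.4]{Car06b}.

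First, by Proposition \ref{locallf} (together with Dwork's trick at places of $U$, as noted in the remark following that proposition), the local $L$-factors of $\cur F$ at every $v\in \norm{X}$ coincide with those of $j_{!+}\cur F$, so that
\[ \widehat L(\cur F,t) = L(j_{!+}\cur F,t), \qquad \widehat L(\cur F^\vee,t) = L(j_{!+}\cur F^\vee,t) = L(\mathbf{D}(j_{!+}\cur F),t). \]
Thus it suffices to prove that for any holonomic $F\text{-}\cur{D}_{\cur X,\Q}^\dagger$-module $\cur M$, one has a functional equation relating $L(\cur M,t)$ and $L(\mathbf{D}\cur M,t^{-1})$ up to a monomial (and then apply this with $\cur M = j_{!+}\cur F$, together with a Tate twist).

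Next, I would invoke the cohomological formula of \cite[Th\'eor\`eme 3.3.4]{Car06b}, expressing $L(\cur M,t)$ as an alternating product of characteristic polynomials of Frobenius acting on the rigid cohomology $H^i\mathbb{R}\Gamma(\cur X,\cur M)$. Poincar\'e duality \cite[Theorem 1.3.5]{Car06b} identifies this cohomology with the dual of the cohomology of $\mathbf{D}\cur M$ (with an appropriate shift and Tate twist coming from the relative dualising complex of $\cur X/\spf W$). Therefore the characteristic polynomials appearing in $L(\cur M,t)$ are, up to the elementary identity
\[ \det(1-tF\mid V^\vee) = \det(-tF\mid V^\vee)\cdot\det(1-t^{-1}F^{-1}\mid V^\vee), \]
the characteristic polynomials appearing in $L(\mathbf{D}\cur M,t^{-1})$, suitably twisted. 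The Tate twist in Poincar\'e duality is exactly what produces the factor $\cur F^\vee(1)$ on the right-hand side of the functional equation, and the scalar factors $\det(-tF\mid H^i)$ assemble into a monomial $\epsilon(\cur F,t)$ in $t$.

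The main obstacle, as usual with such functional equations, is bookkeeping: carefully matching the degree shifts, Frobenius twists, and duality isomorphisms so that the resulting constant really is a monomial and the remaining factor is exactly $\widehat L(\cur F^\vee(1),t^{-1})$. Once one has set up the dictionary between Proposition \ref{locallf}, the duality of $j_{!+}$, Berthelot--Caro's Poincar\'e duality, and the cohomological formula, the functional equation drops out purely formally, with $\epsilon(\cur F,t)$ defined as the product $\prod_i \det(-tF\mid H^i\mathbb R\Gamma(\cur X, j_{!+}\cur F))^{(-1)^{i+1}}$ up to the appropriate power of $t$ encoding the Tate twist and the dimension of $\cur X$.
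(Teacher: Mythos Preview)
Your proposal is correct and follows exactly the approach indicated by the paper: the paper's proof is the single sentence preceding the corollary, which says to apply Poincar\'e duality \cite[Theorem 1.3.5]{Car06b} and the cohomological formula \cite[Th\'eor\`eme 3.3.4]{Car06b}, implicitly using the identification $\widehat L(\cur F,t)=L(j_{!+}\cur F,t)$ from Proposition \ref{locallf} and the compatibility $\mathbf{D}(j_{!+}\cur F)\cong j_{!+}(\cur F^\vee)$ established just before. You have simply unpacked this in more detail.
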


The next key ingredient in the proof of Proposition \ref{gllo} is the ability to twist overconvergent $F$-isocrystals by characters of the id\`ele class group. So let $\chi:\A_F^*/F^*\rightarrow \C^*$ be a finite order character, unramified at all places of $U$. Via the isomorphism $\A_F^*/F^* \cong W_F^\mathrm{ab}$ of global class field theory, we may view $\chi$ as a character of the Weil group $W_F$ of $F$, and since the image of $\chi$ consists of roots of unity we may choose a finite extension $L/K$ and view  $\chi$ as a continuous character $\chi: G_F\rightarrow L^*$, unramified at all places of $U$. Hence we obtain a character $\chi:\pi_1^\et(U,\bar\eta)\rightarrow L^*$ where $\bar\eta$ is some choice of geometric generic point.

Now, by \cite[Theorem 2.1]{Cre87}  there is an equivalence of categories between continuous $L$-valued representations of $\pi_1^\et(U,\bar\eta)$ which have finite local monodromy at all places of $S$ and the extension of scalars $F\text{-}\mathrm{Isoc^\dagger}(U/K)\otimes_K L$ of the $K$-linear category $F\text{-}\mathrm{Isoc}^\dagger(U/K)$ to $L$. Hence we may associate to $\chi$ a rank one isocrystal $\cur{E}_\chi \in F\text{-}\mathrm{Isoc}^\dagger(U/K)\otimes_K L$. While we defined $L$-functions for objects in $F\text{-}\mathrm{Isoc}^\dagger(U/K)$, essentially the same definition works for $F\text{-}\mathrm{Isoc}^\dagger(U/K)\otimes_K L$, and hence it makes sense to speak of the twisted $L$-function $L(\cur{F}\chi,t):=L(\cur{F}\otimes \cur{E}_\chi,t)$.

To prove Proposition \ref{gllo} we now proceed \emph{exactly} as in the proof of \cite[Th\'eor\`eme 9.8]{Del73}, which shows that the semisimplifications $\{ \cur{F}_{\ell,c}^\mathrm{ss} \}$ (as Weil--Deligne representations, i.e. not just their Frobenius semisimplifications) are weakly $\Q$-compatible. Hence the traces of $W_F$ of the graded pieces of the \emph{weight} filtration have values in $\Q$ independence of $\ell$, and we can then conclude using the weight monodromy theorem (\cite[Th\'eor\`eme 1.8.4]{Del80} for $\ell\neq p$, \cite[Theorem 10.8]{Cre98} for $\ell=p$), since this states that the weight filtration coincides with the monodromy filtration.

 Before moving on to unipotent fundamental groups over local fields, let us give an `application' of Theorem \ref{main22} to give a new proof of a result of Abe and Caro \cite[Corollary 4.3.13]{AC14} concerning Hasse--Weil $L$-functions of varieties over global function fields (at least in the semistable case). Of course, as remarked earlier this `application' is more of a special case of the \emph{proof} of Proposition \ref{gllo}, however, we thought it was worth isolating as a separate result in its own right. So suppose that we have a smooth and proper variety $Y/F$, then for any place $v$ of $F$, we can base change $Y$ to the completion $F_v$, and consider the rigid cohomology groups $H^i_\rig(Y_{F_v}/\cur{R}_v)$ as $(\varphi_v,\nabla)$-modules over $\cur{R}_v$.

\begin{definition} Define
\begin{align*}  L_{\mathrm{HW},v}(F,H^i_\rig(Y),t)&:=\det(1-\varphi_v t^{\deg v} \mid H^i_\rig(Y_{F_v}/\cur{R}_v)^{\nabla=0}) \\
L_{\mathrm{HW}}(F,H^i_\rig(Y),t) &= \prod_v L_{\mathrm{HW},v}(F,H^i_\rig(Y),t). \end{align*} 
\end{definition}

There is another $p$-adic definition of the Hasse--Weil $L$-function given by Abe and Caro in \cite[\S4.3]{AC14}, which they denote $L_{\mathrm{HW}}(F,\cur{H}^i(Y/K),t)$. To see that these two definitions coincide, given Proposition \ref{locallf} above, it suffices to note that if we have a smooth projective model $\cur{Y}\rightarrow U$ over \emph{some}  
open subset of $U$, and let $\cur{H}^i$ denote Matsuda and Trihan's overconvergent pushforward $\mathbf{R}^if_*\cur{O}_{Y/K}^\dagger$, then for any place $v$ of $F$, we have $\mathbf{i}_v^*\cur{H}^i\cong H^i_\rig(Y_{F_v}/\cur{R}_v)$. This was noted in the proof of \cite[Proposition 5.52]{LP16}. We have the following `corollary' of Theorem \ref{main22}.

\begin{corollary} \label{hwi} Let $Y/F$ be a smooth and proper variety. Then $L_{\mathrm{HW}}(F,H^i_\rig(Y),t)$ agrees with the Hasse--Weil $L$-function $L_{\mathrm{HW}}(F,H^i_\et(Y_{F^\mathrm{sep}},\Q_\ell),t)$ attached to the $\ell$-adic cohomology of $Y$.
\end{corollary}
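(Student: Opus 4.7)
The plan is to reduce to showing equality of local $L$-factors at each place $v$ of $F$, and then to identify both the $p$-adic factor $\det(1-\varphi_v t^{\deg v}\mid H^i_\rig(Y_{F_v}/\cur{R}_v)^{\nabla=0})$ and the standard $\ell$-adic Euler factor $\det(1-\mathrm{Frob}_v t^{\deg v}\mid H^i_\et(Y_{F_v^{\mathrm{sep}}},\Q_\ell)^{I_v})$ with the local Weil--Deligne $L$-factor of the Weil--Deligne representation $H^i_\ell(Y_{F_v})$ attached to the cohomology of $Y$ base-changed to the equicharacteristic local field $F_v$ (taking $\ell=p$ for the $p$-adic factor). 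For $\ell\neq p$ the identification $H^i_\et(Y_{F_v^{\mathrm{sep}}},\Q_\ell)^{I_v} = H^i_\ell(Y_{F_v})^{I_v,N=0}$ is standard. For $\ell=p$, the preamble already records that $\mathbf{i}_v^*\cur{H}^i \cong H^i_\rig(Y_{F_v}/\cur{R}_v)$, so what is needed is the purely local statement that horizontal sections of a $\pn$-module over $\cur{R}_v$ compute precisely the $(I_v,N=0)$-invariants of the associated Weil--Deligne representation under Marmora's functor: after using the $p$-adic local monodromy theorem to write the $\pn$-module, up to finite base change, as $D\otimes\cur{R}_v'$ for a $(\varphi,N)$-module $D$, one has $M^{\nabla=0} = (D^{N=0})^{I_v}$ with the correct Frobenius action.

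With both local factors now expressed in the form $\det(1-\varphi_v t^{\deg v}\mid H^i_\ell(Y_{F_v})^{I_v,N=0})$, I would apply Theorem \ref{main22} to the smooth and proper variety $Y_{F_v}$ over the equicharacteristic local field $F_v$. This yields weak $\Q$-compatibility of $\{H^i_\ell(Y_{F_v})\}_\ell$ (including $\ell=p$), and Lemma \ref{ws1} upgrades this to $\Q$-compatibility of the Frobenius semisimplifications. Since the local $L$-factor of a Weil--Deligne representation depends only on the characteristic polynomial of Frobenius on the $(I,N=0)$-invariants, and this is invariant under Frobenius semisimplification and preserved by isomorphism of Weil--Deligne representations over any sufficiently large algebraically closed coefficient field, equality of the local factors at $v$ for all $\ell$ follows; taking the product over $v\in\norm{X}$ concludes the proof.

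The main obstacle will be the $p$-adic identification in the first step: tracing through Marmora's functor to confirm that $H^i_\rig(Y_{F_v}/\cur{R}_v)^{\nabla=0}$, equipped with the $\#k_v$-power Frobenius $\varphi_v$ appearing in the definition of $L_{\mathrm{HW},v}$, genuinely coincides as a $\varphi_v$-module with the $(I_v,N=0)$-invariants of the $K^{\mathrm{un}}_v$-valued Weil--Deligne representation $H^i_p(Y_{F_v})$. This is essentially definitional but requires some care to match conventions, in particular the relationship between $K_v$ and $K^{\mathrm{un}}_v$ coefficients and the linearisation of Frobenius. Once this compatibility is pinned down, the deduction from Theorem \ref{main22} and Lemma \ref{ws1} is entirely formal, and indeed the result is, as the authors note, more naturally viewed as a special case of the ingredients already used in the proof of Proposition \ref{gllo} than as a genuine new application.
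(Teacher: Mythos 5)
Your proposal is correct and follows the route the paper has in mind: Corollary \ref{hwi} is stated without explicit proof as a ``corollary'' of Theorem \ref{main22}, and the implicit argument is exactly what you spell out --- identify both local factors as Euler factors of the Weil--Deligne representations $H^i_\ell(Y_{F_v})$, apply Theorem \ref{main22} at each place to get weak $\Q$-compatibility, upgrade via Lemma \ref{ws1}, and note that the local $L$-factor is an invariant of the Frobenius-semisimplified isomorphism class over a large algebraically closed coefficient field. The $p$-adic comparison you flag (that $H^i_\rig(Y_{F_v}/\cur{R}_v)^{\nabla=0}$, with its $\varphi_v$-action, computes the $(I_v,N=0)$-invariants of $H^i_p(Y_{F_v})$ under Marmora's functor) is indeed the only point requiring verification beyond what the paper records; it follows from the $p$-adic local monodromy theorem together with the observation that over $\cur{R}_E$ for a trivialising extension $E/F_v$ the horizontal sections of $D\otimes\cur{R}_E$ are precisely $D^{N=0}$ (since $\log t\notin\cur{R}_E$), and descent to $\cur{R}_v$ imposes the $I_v$-invariance.
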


\section{Unipotent fundamental groups over equicharacteristic local fields}\label{uel}

We will now suppose that we have $F\cong k\lser{t}$ equicharacteristic local, the main result of this section is the following.

\begin{theorem} \label{main3} Let $X/F$ be smooth and projective, with semistable reduction, and let $x\in X(F)$. Then $C_{\ell,w}(X,\hat{\cur U}/\mathfrak{a}^\bu)$ holds.
\end{theorem}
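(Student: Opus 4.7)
The plan is to mirror the proof of Theorem \ref{main2}, working with the graded pieces of the augmentation filtration on $\hat{\cur U}/\mathfrak{a}^k$. First, I would use Proposition \ref{spread}, together with analogues of Propositions \ref{llc} and \ref{plc} for the unipotent fundamental group (which ought to follow from the fact that both $\pi_1^\ell(X,x)$ and its log-crystalline counterpart depend only on the special fibre endowed with its natural log structure), to reduce to the globally-defined case: there exists a smooth projective curve $C/k$, a $k$-rational point $c\in C(k)$ with $F\cong \widehat{k(C)}_c$, and a proper semistable family $\cur{Y}\to C$ smooth over $U:=C\setminus \{c\}$ with $\cur{Y}_F\cong X$. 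Since $\cur{Y}/\widehat{\cur{O}}_{C,c}$ is proper, the valuative criterion will extend $x\in X(F)$ to a section over $\spec{\widehat{\cur{O}}_{C,c}}$, and after possibly shrinking $C$ this can be spread out to a section $\tilde{x}\in \cur{Y}(C)$.

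Next, for each $\ell$ and $k$, I would construct a local system $\cur{F}_\ell^k$ on $U$ whose stalk at a closed point $y\in U$ is $\hat{\cur U}_{\ell,\cur{Y}_y,\tilde{x}(y)}/\mathfrak{a}^k$; for $\ell\neq p$ this is a lisse $\Q_\ell$-sheaf arising from the relative unipotent fundamental group, while for $\ell=p$ it is an overconvergent $F$-isocrystal on $U/K$ produced from the relative theory of unipotent isocrystals (as in \cite{CLS99b} and the subsequent $p$-adic developments). The augmentation filtration will induce a filtration on $\cur{F}_\ell^k$ whose successive graded pieces are subquotients of tensor powers of the dual of the relative $H^1$; by Deligne's purity (resp. Kedlaya's purity at $\ell=p$) each such graded piece should be pure of a specific negative weight. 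Theorem \ref{main1} applied to the fibres $\cur{Y}_y/k(y)$ will then give weak $\Q$-compatibility of each augmentation graded piece of $\{\cur{F}_\ell^k\}_\ell$ on $U$, and Proposition \ref{gllo} will pass this compatibility to $c$, provided one identifies $\cur{F}_{\ell,c}^k$ with $\hat{\cur U}_{\ell,X,x}/\mathfrak{a}^k$ via the analogues of smooth--proper base change for the fundamental groups.

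Finally, to recover weak $\Q$-compatibility of $\hat{\cur U}_{\ell,X,x}/\mathfrak{a}^k$ itself from that of the augmentation graded pieces, the idea is to observe that this filtration is by pure pieces of pairwise distinct weights, and hence coincides up to reindexing with the weight filtration on the mixed Weil--Deligne representation $\hat{\cur U}_{\ell,X,x}/\mathfrak{a}^k$. The weight--monodromy theorem (\cite[Th\'eor\`eme 1.8.4]{Del80} for $\ell\neq p$, \cite[Theorem 10.8]{Cre98} for $\ell=p$) applied to each pure graded piece will determine the monodromy filtration of the total space in terms of those of the graded pieces, and weak compatibility of the traces of Frobenius will descend accordingly.

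The hardest step will surely be the construction of the relative unipotent fundamental group as a local system on $U$ in both the $\ell$-adic and $p$-adic settings, with the correct stalk description at closed points and the correct specialisation at $c$ (the analogue for $\pi_1^\mathrm{uni}$ of Propositions \ref{llc} and \ref{plc}). Once these base-change and specialisation facts are in place, the remaining arguments -- invoking Theorem \ref{main1}, Proposition \ref{gllo}, and the weight--monodromy formalism -- should reduce to bookkeeping.
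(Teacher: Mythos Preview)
Your overall strategy is the same as the paper's: reduce to a globally defined semistable family via spreading out and special-fibre comparison theorems for $\pi_1$ (the analogues of Propositions~\ref{llc} and~\ref{plc}, recorded in the paper as Propositions~\ref{fgssl} and~\ref{fgssp}), build the relative $\hat{\cur U}/\mathfrak{a}^k$ as local systems on $U$, invoke Theorem~\ref{main1} fibrewise, and apply Proposition~\ref{gllo} at $c$. The paper also cites exactly the inputs you anticipate for the relative construction: Grothendieck's homotopy exact sequence for $\ell\neq p$, and \cite{Laz15}, \cite{Laz16b} for $\ell=p$.

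There is, however, a genuine error in your reduction step. Proposition~\ref{spread} does \emph{not} produce a family $\cur{Y}\to C$ with $\cur{Y}_F\cong X$; it only produces one that agrees with $\cur{X}$ modulo $\mathfrak{m}_c^n$, so in particular the generic fibres $\cur{Y}_F$ and $X$ need not be isomorphic as varieties. What survives is that the \emph{log special fibres} coincide, and it is precisely the comparison isomorphisms (Propositions~\ref{fgssl} and~\ref{fgssp}) that transport this into an identification of the relevant Weil--Deligne representations on $\pi_1$. You allude to this dependence on the special fibre, but then contradict it by asserting $\cur{Y}_F\cong X$.

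This error propagates to your treatment of the base point. Since $x\in X(F)$ and $X\neq\cur{Y}_F$, the valuative criterion only extends $x$ to a section of $\cur{X}$ over $\widehat{\cur{O}}_{C,c}$, not of $\cur{Y}$; there is no reason this formal section spreads out to $\cur{Y}(C)$. The paper handles this by re-running Artin approximation for the section: the formal section is approximated by a Henselian one, which descends to an \'etale neighbourhood. The resulting global section need not agree with $x$, but it does agree on the special fibre, and that suffices by Propositions~\ref{fgssl} and~\ref{fgssp}.

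One further remark: your explicit passage to the augmentation graded pieces $\mathfrak{a}^{j}/\mathfrak{a}^{j+1}$ (each pure of weight $-j$ on $U$) before invoking Proposition~\ref{gllo} is more careful than the paper's presentation, which simply says ``following the argument of the previous section'' without isolating the purity hypothesis. Your final reassembly step, however, is sketched rather loosely: weight--monodromy for each quasi-pure graded piece does not by itself determine the monodromy filtration on the total extension, since $N$ can map across the augmentation filtration. You would need an additional argument (for instance, controlling the relative monodromy filtration on a mixed object, or working directly with the full semisimplification as in Deligne's original argument) to pass from weak compatibility of the graded pieces to that of $\hat{\cur U}/\mathfrak{a}^k$ itself.
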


Of course, the basic strategy of proof will be identical to that in \S\ref{clf}, the first step will be in showing an appropriate analogue of Propositions \ref{llc} and \ref{plc} holds for unipotent fundamental groups. So let $\cur X$, $\cur X^\times$, $X_0^\times$ be as in the paragraph before Proposition \ref{llc}, let $x_0\in X_0^\mathrm{sm}(k)$ denote the specialisation of our given point $x\in X(F)$, and choose a geometric point $\bar{x}_0$ above it. For $\ell\neq p$ we will consider $\pi_1^{\log\text{-}\et}(X_0^{\times,\mathrm{tame}},\bar{x}_0)$, the log-\'etale fundamental group of $X_0^{\times,\mathrm{tame}}$ as defined in \cite[\S2]{Lep13}. This has a natural $G_F$-action (trivial on wild inertia $P_F$), and hence so does its $\Q_\ell$-pro-unipotent completion $\pi_1^{\log\text{-}\et}(X_0^{\times,\mathrm{tame}},\bar{x}_0)_{\Q_\ell}$. 

\begin{proposition}[\cite{Lep13}, Theorems 2.5, 2.7] \label{fgssl} There is an isomorphism
\[ \pi_1^{\log\text{-}\et}(X_0^{\times,\mathrm{tame}},\bar{x}_0)_{\Q_\ell}\cong \pi_1^{\et}(X_{F^\mathrm{sep}},\bar x)_{\Q_\ell}\]
of pro-unipotent groups with $G_F$-action.
\end{proposition}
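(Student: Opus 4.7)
The plan is to establish a logarithmic analogue of the classical specialization map and then show it induces the desired isomorphism after pro-unipotent completion. I would proceed in three steps.

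First, I would construct a comparison morphism. The semistable scheme $\cur{X} \to \spec{R}$ endowed with its special fibre log structure becomes log smooth, and both the generic fibre $X = \cur{X}_F$ and the special fibre $X_0^\times$ sit inside $\cur{X}^\times$. Using the compatibility of base points provided by $x \in X(F)$ specialising to $x_0 \in X_0^{\mathrm{sm}}(k)$, together with the functoriality of the Kummer/log-\'etale site, one obtains a specialisation morphism
\[ \pi_1^{\et}(X_{F^\mathrm{sep}}, \bar{x}) \longrightarrow \pi_1^{\log\text{-}\et}(X_0^{\times,\mathrm{tame}}, \bar{x}_0). \]
This is the logarithmic analogue of the classical specialisation morphism for the smooth fibres of a proper morphism. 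The $G_F$-equivariance of the map comes for free from the naturality of the construction and from the standard identification of the tame log-\'etale site of $X_0^\times$ with a quotient by the wild inertia of the \'etale site of $X_{F^\mathrm{sep}}$.

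Second, I would prove that the comparison map becomes an isomorphism after pro-$\ell$ completion. This is the heart of the matter, and the main obstacle. It relies on logarithmic proper base change (due to Nakayama, Fujiwara--Kato, and Illusie), which says that for a proper log smooth morphism $\cur{X}^\times \to R^\times$, the higher direct images of $\Q_\ell$ in the Kummer \'etale topos commute with base change to the special fibre and coincide with the nearby cycles on the generic fibre. Applied to all finite \'etale (respectively Kummer tame \'etale) covers of $\cur{X}^\times$, this identifies the full subcategory of tame lisse $\Q_\ell$-sheaves on $X_{F^\mathrm{sep}}$ with the category of Kummer \'etale lisse $\Q_\ell$-sheaves on $X_0^\times$. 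Unipotent objects correspond to unipotent objects on both sides, so via Tannakian formalism applied to the fibre functors given by $\bar{x}$ and $\bar{x}_0$, one obtains the desired isomorphism of $\Q_\ell$-pro-unipotent completions.

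Third, I would verify that the isomorphism respects the $G_F$-actions. The $G_F$-action on the right-hand side is defined via the natural action of the tame quotient $G_F / P_F$ on $X_0^{\times, \mathrm{tame}}$ coming from the logarithmic structure at the boundary, while on the left-hand side it comes from the splitting induced by $x$. Compatibility follows by comparing the two actions on a cofinal system of finite (log) \'etale covers and passing to the limit, after which Malcev completion preserves the equivariance. The main technical difficulty is ensuring that the log proper base change identification is compatible with these Galois actions, for which one uses that the specialisation map is constructed via a valuative/henselian lift that is itself $G_F$-equivariant.
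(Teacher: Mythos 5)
The paper offers no proof of this proposition: it is cited verbatim from Lepage's work, Theorems 2.5 and 2.7 of \cite{Lep13}, so there is no ``paper proof'' against which your argument can be matched. What I can do is assess whether your outline would in fact establish the statement.

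The overall shape is right: construct a specialisation morphism, show it becomes an isomorphism after $\Q_\ell$-prounipotent completion, and track $G_F$-equivariance. But your second step, which is, as you say, the heart of the matter, is not correctly justified. Logarithmic proper base change in the form you invoke is a statement about higher direct images of Kummer-\'etale $\Q_\ell$-sheaves (or about nearby cycles); it compares \emph{cohomology}, and it does not, by itself, produce the claimed equivalence between the category of unipotent lisse $\Q_\ell$-sheaves on $X_{F^\mathrm{sep}}$ and that of unipotent Kummer-\'etale lisse $\Q_\ell$-sheaves on $X_0^\times$. Passing from a cohomological comparison to an equivalence of Tannakian categories of local systems is precisely what needs to be proved, and you cannot get it by ``applying base change to all finite covers'' -- that is circular, since you need to know which covers on both sides match up in the first place. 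The genuine input (and this is what Lepage establishes) is a log-purity/Abhyankar-type statement at the level of fundamental groups: the specialisation morphism
\[ \pi_1^\et(X_{F^\mathrm{sep}},\bar{x}) \longrightarrow \pi_1^{\log\text{-}\et}(X_0^{\times,\mathrm{tame}},\bar{x}_0) \]
is surjective with \emph{pro-$p$ kernel}. Once that is in hand, the passage to $\Q_\ell$-prounipotent completions (for $\ell \neq p$) is immediate: a continuous unipotent representation of a profinite group into $\mathrm{GL}_n(\Q_\ell)$ has image in a pro-$\ell$ group after conjugation into $\mathrm{GL}_n(\Z_\ell)$, hence kills every pro-$p$ normal subgroup, so the two groups have identical categories of unipotent $\Q_\ell$-representations and Tannaka duality gives the desired isomorphism. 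Your phrase ``pro-$\ell$ completion'' should also be replaced by ``$\Q_\ell$-prounipotent completion'' throughout, since these are different objects (a profinite group versus a prounipotent affine group scheme); fortunately the argument above runs at the prounipotent level directly, so this is only a matter of terminology. Your third step on $G_F$-equivariance is fine in spirit and would go through once the middle step is repaired, since the specialisation morphism and the formation of unipotent completions are both natural.
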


For $\ell=p$ we will consider the category $\mathrm{Uni}_K(X^\times_0/K^\times)$ of unipotent log-isocrystals on $X^\times_0$, relative to $W$ equipped with the Teichm\"uller lift of the log structure on $k$ defined by $\N\rightarrow k$, $1\mapsto 0$, as defined in \cite[\S4]{Shi00}. This is a Tannakian category by Proposition 4.14. of \emph{loc. cit.}, and $x_0$ defines a fibre functor
\[ x_0^*: \mathrm{Uni}_K(X^\times_0/K^\times)\rightarrow \mathrm{Vec}_K. \]
We let $\pi_1^{\log\text{-}\mathrm{cris}}(X_0^\times/K^\times,x_0)$ denote the corresponding pro-unipotent group. To put a Frobenius and monodromy operator on this pro-unipotent group, i.e. to turn this into a non-abelian $(\varphi,N)$-module, we follow the approach of \cite{Laz16b}. So let $k^\times$ denote $k$ considered with the log structure of the punctured point, and $f_0:X_0^\times \rightarrow \spec{k^\times}$ the structure morphism.

We consider the Tannakian category $\cur{N}F\text{-}\mathrm{Isoc}(X_0^\times/K)$ of relatively unipotent $F$-log-isocrystals on $X_0$, relative to $W$ equipped with the trivial log structure. In other words
\[ \cur{N}F\text{-}\mathrm{Isoc}(X_0^\times/K)\subset F\text{-}\mathrm{Crys}(X_0^\times/W)_{\Q} \]
is the full sub-category of objects which are iterated extensions of those pulled back from the category $\underline{\mathbf{M}\Phi}_K^N$ of $(\varphi,N)$-modules  via
\[ f_0^*: \underline{\mathbf{M}\Phi}_K^N = F\text{-}\mathrm{Isoc}(\spec{k^\times}/K)\rightarrow F\text{-}\mathrm{Isoc}^\dagger(X_0^\times/K). \]
We therefore have a pair of functors
\[ x_0^*: \cur{N}F\text{-}\mathrm{Isoc}(X_0^\times/K) \leftrightarrows \underline{\mathbf{M}\Phi}_K^N :f^*_0, \]
and using the formalism of \cite[\S2.1]{Laz15}, we may attach to this situation a `relative fundamental group'
\[ G^{\lcris}_{X_0^\times,x_0}:= G(\cal{N}F\text{-}\mathrm{Isoc}(X_0^\times/K),x_0^*)\]
in the terminology of \emph{loc. cit.}; this is a pro-unipotent group scheme over the category $\underline{\mathbf{M}\Phi}_K^N$. Concretely, then such an object is something of the form $\spec{A}$, where $A$ is a Hopf $K$-algebra equipped with the structure of and ind-$(\varphi,N)$-module. Moreover the Hopf-algebra and $(\varphi,N)$-module structures are compatible in the sense that the maps
\begin{align*}
A\otimes_K A \overset{m}{\rightarrow} A,\;\; A\overset{c}{\rightarrow} A\otimes_K A  \\
K \overset{u}{\rightarrow} A,\;\;A\overset{\varepsilon}{\rightarrow} K,\;\; A\overset{\iota}{\rightarrow} A
\end{align*}
expressing the Hopf algebra structure of $A$ are in fact morphisms of $(\varphi,N)$-modules. By \cite[Proposition 2.3]{Laz15} this group scheme $G^{\lcris}_{X_0^\times,x_0}$ is uniquely characterised by the fact that its category of enriched representations, i.e. representations in objects of $\underline{\mathbf{M}\Phi}_K^N$, is equivalent to $\cal{N}F\text{-}\mathrm{Isoc}(X_0^\times/K)$ in such a way that $x_0^*$ gets identified with the forgetful functor.

By \cite[Proposition 9.9]{Laz16b} the affine group scheme over $K$ underlying $G^{\lcris}_{X_0,x_0}$, i.e. the affine group scheme obtained by forgetting $(\varphi,N)$-module structures, is simply $\pi_1^{\log\text{-}\mathrm{cris}}(X_0^\times/K^\times,x_0)$. Hence we may view this latter group scheme canonically as having the structure of a non-abelian $(\varphi,N)$-module. Hence the base change  $\pi_1^{\log\text{-}\mathrm{cris}}(X_0^\times/K^\times,x_0) \otimes_K \rk$ can be viewed as a non-abelian $\pn$-module over $\rk$. Alternatively, we may consider the non-abelian $\pn$-module $\pi_1^\rig(X/\rk,x)$ defined in \S\ref{recap}. The following is then the non-abelian analogue of Proposition \ref{plc}.

\begin{proposition} \label{fgssp} There is a natural isomorphism
\[ \pi_1^{\log\text{-}\mathrm{cris}}(X_0^\times/K^\times,x_0) \otimes_K \rk \cong \pi_1^\rig(X/\rk,x)\]
of non-abelian $\pn$-modules.
\end{proposition}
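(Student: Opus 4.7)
The plan is to establish the isomorphism via Tannakian duality, by proving an equivalence between the relevant Tannakian categories of unipotent objects compatibly with fibre functors and Frobenius structures. Recall that $\pi_1^\rig(X/\rk,x)$ is the Tannakian fundamental group of the category of unipotent overconvergent $\pn$-isocrystals on $X$ relative to $\rk$ with fibre functor $x^*$, while the enriched fundamental group $G^{\lcris}_{X_0^\times,x_0}$ has as its category of enriched representations the category $\cur{N}F\text{-}\mathrm{Isoc}(X_0^\times/K)$ of relatively unipotent $F$-log-isocrystals. The goal therefore becomes to show that base-changing the latter category along $\underline{\mathbf{M}\Phi}_K^N \hookrightarrow \underline{\mathbf{M}\Phi}_\rk^\nabla$ produces a category equivalent to the category of $\pn$-unipotent objects over $\rk$ on $X$, in a manner matching fibre functors at $x_0$ and $x$.

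To produce the comparison functor, I would follow the template of Proposition \ref{plc}: first send a log-isocrystal on $X_0^\times$ to an overconvergent isocrystal on $X$ relative to $\ekd$ via the comparison machinery of \cite{LP16}, then tensor with $\rk$. Compatibility with $x_0^*$ and $x^*$ should be tautological from the construction, since both fibre functors are defined by evaluation at the same specialised point. To verify that this functor is an equivalence, one checks it induces isomorphisms on $\mathrm{Hom}$ and $\mathrm{Ext}^1$ between arbitrary unipotent objects; devissage on the length of the unipotence filtration combined with the five lemma reduces this to an analog of Proposition \ref{plc} with coefficients in iterated extensions of the constant isocrystal, which can itself be obtained by induction from the constant case. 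Having identified the underlying pro-unipotent group schemes, the $\pn$-module structures on the two sides must then agree because both are characterised Tannakianly by compatibility with the fibre functor $x^*$: on the left, this structure comes from applying the base change $\underline{\mathbf{M}\Phi}_K^N \hookrightarrow \underline{\mathbf{M}\Phi}_\rk^\nabla$ to $G^{\lcris}_{X_0^\times,x_0}$, while on the right it is the one constructed in \cite[\S5]{Laz16b}.

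The main obstacle is establishing the comparison at the level of \emph{isocrystals} (and not merely cohomology) between log-crystalline isocrystals on $X_0^\times$ and overconvergent isocrystals on $X/\ekd$, which is the non-trivial input required to define the comparison functor in the first place. In the globally defined situation this fits into the framework of Shiho-type comparison theorems between log-crystalline and rigid cohomology with coefficients; the task is therefore to adapt those results to the $\ekd$-theory developed in \cite{LP16}, using the spreading-out technique of Proposition \ref{spread} to reduce to a global semistable family when necessary. Once this isocrystal-level comparison is in place, the cohomology comparison with unipotent coefficients, and hence the full Tannakian equivalence, should follow with some bookkeeping but no further substantive new input.
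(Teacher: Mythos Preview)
Your high-level strategy---prove a Tannakian equivalence of unipotent categories compatible with fibre functors and Frobenius---is sound, and is in fact what underlies the results the paper cites. But the paper's proof takes a shorter and structurally different route: rather than building a direct comparison functor from log-isocrystals on $X_0^\times$ to overconvergent isocrystals on $X/\ekd$, it interpolates both fundamental groups through a single intermediate object over the ring $\cur{E}_K^+ = W\pow{t}\otimes_W K$. Concretely, the paper invokes \cite[\S9]{Laz16b} (Propositions 9.6 and 9.9) to produce a non-abelian log-$\pn$-module $\pi_1^{\lcris}(\cur{X}^\times/\cur{E}_K^+,x)$ over $\cur{E}_K^+$ whose base change to $\ekd$ is $\pi_1^\rig(X/\ekd,x)$ and whose base change to $K$ (via $t\mapsto 0$) is $\pi_1^{\log\text{-}\mathrm{cris}}(X_0^\times/K^\times,x_0)$. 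The conclusion then follows from \cite[Proposition 5.34]{LP16}, which identifies the two resulting objects after passing to $\rk$.

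Two remarks on your plan specifically. First, the ``comparison functor'' you want does not arise as a direct functor from isocrystals on $X_0^\times$ to isocrystals on $X/\ekd$; the mechanism is precisely the intermediate category of log-isocrystals on the full formal scheme $\cur{X}^\times$ over $\cur{E}_K^+$, which restricts in both directions. So your step ``send a log-isocrystal on $X_0^\times$ to an overconvergent isocrystal on $X$'' would, once made precise, amount to reconstructing the intermediate object that the paper simply cites. Second, the appeal to the spreading-out Proposition~\ref{spread} is out of place here: that device is used elsewhere in the paper to globalise a semistable family for the purposes of $\ell$-independence, but the present comparison is purely local over $\spf{k\pow{t}}$ and needs no such reduction.
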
 

\begin{proof} By definition there exists a non-abelian $\pn$-module $\pi_1^\rig(X/\ekd,x)$ such that 
\[ \pi_1^\rig(X/\ekd,x) \otimes_{\ekd} \rk \cong \pi_1^\rig(X/\rk,x). \]
Let $\cur{E}^+_K=W\pow{t}\otimes_W K$ be the ring of bounded power series over $K$, thus we have $\cur{E}_K^+\subset \ekd$ as the sub-ring of series with no terms in negative powers of $t$, we also have a natural `evaluate at $t=0$ map' $\cur{E}_K^+\rightarrow K$. Thanks to \cite[Proposition 3.1.4]{Tsu98} the category $\underline{\mathbf{M}\Phi}_{S_K}^{\nabla,\log} $ of log-$\pn$-modules over $\cur{E}_K^+$ is Tannakian, and it was shown in \cite[\S9]{Laz16b} (see in particular Propositions 9.6 and 9.9) that there exists a non-abelian log-$\pn$-module $\pi_1^{\lcris}(\cal{X}^\times /\cur{E}^+_K,x)$ over $\cur{E}_K^+$ such that 
\begin{align*}
 \pi_1^\rig(X/\ekd,x) &\cong \pi_1^{\lcris}(\cal{X}^\times/\cur{E}_K^+,x)\otimes_{\cur{E}_K^+} \cur{E}_K^\dagger \\
  \pi_1^{\log\text{-}\mathrm{cris}}(X_0^\times/K^\times,x_0)  &\cong \pi_1^{\lcris}(\cal{X}^\times/\cur{E}_K^+,x)\otimes_{\cur{E}_K^+} K.
\end{align*}
We can thus conclude by applying \cite[Proposition 5.34]{LP16}. \end{proof}

Now following the argument of the previous section, and given Theorem \ref{main1}, to prove Theorem \ref{main3} it suffices to show the following: suppose we are given a smooth, geometrically connected curve $C/k$, $c\in C(k)$ such that $F\cong \widehat{k(C)}_c$, and a semistable scheme $\cur{X}\rightarrow C$, smooth away from $c$, such that $X=\cur{X}_F$. Suppose further that $x$ comes from a section $s:C\rightarrow \cur{X}$. Let $U=C\setminus c$. Then we want to show that there exist local systems $\cur{F}_{\ell,k}$ on $U$ for all $\ell$ prime and $k\geq 1$ such that:
\begin{enumerate}\item for any closed point $y\in U$ the fibre of $\cur{F}_{\ell,k}$ at $y$ is isomorphic (as a $ W_{k(y)}$-representation) to the quotient of the universal enveloping algebra of $\mathrm{Lie}\;\pi_1^\ell(\cur{X}_y,s(y))$ by the $k$th power of its augmentation ideal;
\item the fibre of $\cur{F}_{\ell,k}$ at $c$ is isomorphic (as a $ W'_F$-representation) to the quotient of the universal enveloping algebra of $\mathrm{Lie}\;\pi_1^\ell(X,x)$ by the $k$th power of its augmentation ideal.
\end{enumerate}

For $\ell=p$ these follow from the results of \cite[\S3]{Laz15} and \cite[\S6]{Laz16b}. Specifically, in \cite[\S3]{Laz15} it was shown how to define, in the above situation, a pro-unipotent affine group scheme $\pi_1^\rig(\cur{X}_U/U,s)$ over the category $F\text{-}\mathrm{Isoc}(U/K)$, such that for any closed point $y\in U$ the fibre $\pi_1^\rig(\cur{X}_U/U,s)_y$ can be identified with the unipotent rigid fundamental group $\pi_1^\rig(\cur{X}_y/K,s(y))$ of the fibre $\cur{X}_y$. It was then proved in  \cite[Theorem 6.1]{Laz16b} that we have an isomorphism
\[ \pi_1^\rig(\cur{X}_U/U,s)_c \cong \pi_1^\rig(X/\ekd,x)\]
as non-abelian $\pn$-modules over $\ekd$, where $\pi_1^\rig(\cur{X}_U/U,s)_c$ denotes the `fibre' at $c$. Taking the quotients of the universal enveloping algebra of $\mathrm{Lie}\pi_1^\rig(\cur{X}_U/U,s)$ by the powers of the augmentation ideal then gives us the required $F$-isocrystals on $U/K$.

For $\ell\neq p$ these two follow from Grothendieck's homotopy exact sequence: we know that $\pi_1^\et(U,\bar\eta)$ acts on $\pi_1^\et(X_{\bar\eta},s(\bar \eta))$, hence on its pro-unipotent completion $\pi_1^\et(X_{\bar\eta},s(\bar \eta))_{\Q_\ell}$ and therefore on the universal enveloping algebra $\hat{\cur{U}}(\mathrm{Lie}\;\pi_1^\et(X_{\bar\eta},s(\bar \eta))_{\Q_\ell})$ of the associated Lie algebra (again, here $\bar\eta$ is any geometric generic point). Hence there is a natural action of $\pi_1^\et(U,\bar\eta)$ on $\hat{\cur{U}}(\mathrm{Lie}\;\pi_1^\et(X_{\bar\eta},s(\bar \eta))_{\Q_\ell})/\mathfrak{a}^k$ for any $k$ ($\mathfrak{a}$ being the augmentation ideal), and this gives a the required lisse $\Q_\ell$-sheaf on $U$.

The astute reader will object that there is a slight gap in the argument outlined above, in that while we can certainly assume that our variety is globally defined, we have not explained why we can assume the existence of a global section. Examining the proof of Proposition \ref{spread}, it suffices to observe that again the given formal section can be arbitrarily well approximated by a Henselian section, which must descend to some \'etale neighbourhood of $s$. While this will not agree with our given formal section, it will do so on the special fibre, which suffices by Propositions \ref{fgssl} and \ref{fgssp}.

\section{Curves in mixed characteristic}\label{cmc}

Finally, we will use the results of the previous section to deduce an $\ell$-independence result for the unipotent fundamental group of a semistable curve in mixed characteristic. So for this section $F$ will be assumed to be a mixed characteristic local field, with ring of integers $R$, and our main result is as follows.

\begin{theorem} Let $\cur{X}/R$ be a proper, semistable curve with generic fibre $X$, and $x\in \cur X(R)$. Then the conjecture $C_{\ell,w}(X,\hat{\cur U}/\mathfrak{a}^\bu)$ holds. 
\end{theorem}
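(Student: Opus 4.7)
The plan is to reduce the statement to the equicharacteristic case, i.e.\ to Theorem \ref{main3}, by deforming the given mixed characteristic semistable curve to an equicharacteristic one with the same special fibre as a log scheme. Since logarithmic deformations of semistable curves are unobstructed, one can produce a proper semistable formal curve $\cur{Y}\to\spf{k\pow{t}}$, smooth away from the closed fibre, whose log special fibre coincides with $X_0^\times$ over $k^\times$. Write $F'=k\lser{t}$, let $Y/F'$ denote the generic fibre of $\cur Y$, and lift the specialisation $x_0\in X_0^{\mathrm{sm}}(k)$ of $x$ to a section $y\in\cur Y(k\pow{t})$ using formal smoothness of $\cur Y$ near $x_0$.

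The main task is to compare, for each prime $\ell$, the Weil--Deligne representations attached to the universal enveloping algebra quotients $\hat{\cur U}_{\ell,X,x}/\mathfrak{a}^k$ and $\hat{\cur U}_{\ell,Y,y}/\mathfrak{a}^k$, coming respectively from $W_F'$ and $W_{F'}'$. For $\ell\neq p$, Proposition \ref{fgssl} together with its mixed characteristic analog (both being instances of Lepage's comparison) gives canonical isomorphisms
\[ \pi_1^{\et}(X_{F^{\mathrm{sep}}},\bar x)_{\Q_\ell} \;\cong\; \pi_1^{\log\text{-}\et}(X_0^{\times,\mathrm{tame}},\bar x_0)_{\Q_\ell}\;\cong\; \pi_1^{\et}(Y_{F'^{\mathrm{sep}}},\bar y)_{\Q_\ell}, \]
and under these identifications the tame inertia actions (inducing the monodromy operators $N$) and the Frobenius actions on the $W_k$-quotient agree on both sides, since they arise from the common log structure on $X_0^\times/k^\times$. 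For $\ell=p$, the result \cite[Theorem 1.8]{AIK15} ensures that $\pi_1^\et(X_{F^{\mathrm{sep}}},\bar x)_{\Q_p}$ is de Rham, and the non-abelian Hyodo--Kato comparison identifies its associated $(\varphi,N)$-module with $\pi_1^{\log\text{-}\cris}(X_0^\times/K^\times,x_0)$. By Proposition \ref{fgssp} combined with Marmora's functor \cite{Mar08}, the same $(\varphi,N)$-module is attached to $\pi_1^\rig(Y/\rk,y)$. Thus in both cases the two Weil--Deligne representations carry the same underlying $(\varphi,N)$-data, and in particular the graded pieces of the monodromy filtration of their universal enveloping algebra quotients agree as $W_k$-representations.

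Granted these comparisons, Theorem \ref{main3} applied to $Y/F'$ gives weak $\Q$-compatibility of $\{\hat{\cur U}_{\ell,Y,y}/\mathfrak{a}^k\}_\ell$, and transporting this via the isomorphisms above yields weak $\Q$-compatibility of $\{\hat{\cur U}_{\ell,X,x}/\mathfrak{a}^k\}_\ell$. I expect the main difficulty to lie in the $\ell=p$ comparison: one must carefully reconcile the Weil--Deligne representation obtained via Fontaine's $D_{\mathrm{pst}}$ machinery from the de Rham $p$-adic étale fundamental group in mixed characteristic with that obtained via Marmora's functor from the overconvergent $\pn$-module in equicharacteristic, showing that both factor through a single non-abelian log-crystalline $(\varphi,N)$-module attached to the common special fibre $X_0^\times$.
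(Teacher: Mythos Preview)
Your proposal is correct and follows essentially the same approach as the paper: reduce to the equicharacteristic case by showing that for each $\ell$ the relevant Weil--Deligne representation depends only on the log special fibre $X_0^\times$ (via Lepage's comparison for $\ell\neq p$ and the non-abelian semistable comparison $\mathbf{D}_{\mathrm{st}}(\pi_1^{\et}(X_{F^{\mathrm{sep}}},\bar x)_{\Q_p})\cong\pi_1^{\log\text{-}\mathrm{cris}}(X_0^\times/K^\times,x_0)$ for $\ell=p$), then deform $X_0^\times$ to equicharacteristic using unobstructedness of log-smooth deformations of curves and apply Theorem~\ref{main3}. Your identification of the $\ell=p$ step as the delicate point is apt; the paper resolves it by citing \cite[Theorem 1.4]{AIK15} (or \cite[Theorem 10.2]{Laz16b}) directly for the $(\varphi,N)$-module identification, which is exactly the ``non-abelian Hyodo--Kato'' input you describe.
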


\begin{proof} The basic idea is that $(\hat{\cur U}/\mathfrak{a}^k )_\ell$ (as a Weil--Deligne representation) only depends on the special fibre $X_0^\times$ of $\cur X$ as a log scheme, and since $X_0$ is a curve, this deforms to equicharacteristic. More concretely, when $\ell\neq p$ we consider the log-unipotent fundamental group $\pi_1^\et(X_0^{\times,\mathrm{tame}},x_0)_{\Q_\ell}$ of the log scheme $X_0^{\times,\mathrm{tame}}$ as a $G_F$-representation, then again applying \cite[Theorems 2.3, 2.5]{Lep13} we have
\[ \pi_1^\et(X_0^{\times,\mathrm{tame}},x_0)_{\Q_\ell} \cong \pi_1^\et(X_{F^\mathrm{sep}},x)_{\Q_\ell} \]
as $G_F$-representations. When $\ell=p$ we consider the log-crystalline fundamental group $\pi_1^{\log\text{-}\mathrm{cris}}(X_0^\times/K^\times,x_0)$ as in \S\ref{uel} above. Thus by \cite[Theorem 1.4]{AIK15} or \cite[Theorem 10.2]{Laz16b} we have an isomorphism
\[ \mathbf{D}_\mathrm{st}(\pi_1^\et(X_{F^\mathrm{sep}},x)_{\Q_p}) \cong \pi_1^{\log\text{-}\mathrm{cris}}(X_0^\times/K^\times,x_0) \]
of $(\varphi,N)$-modules over $K$ (after forgetting the Hodge filtration on the former). 

But now since the log-smooth deformation theory of curves is unobstructed by \cite[Proposition 8.6]{Kat96}, we may deform $X_0^\times$ to characteristic $p$, in other words there exists a semistable family $\cur{Y}\rightarrow \spec{k\pow{t}}$ whose special fibre is isomorphic to $X_0^\times$ as a log scheme. Hence by applying Theorem \ref{main3} we know that the Weil--Deligne representations associated to the quotients of the universal enveloping algebras of $ \pi_1^\et(X_0^{\times,\mathrm{tame}},x_0)_{\Q_\ell}$ and $\pi_1^{\log\text{-}\mathrm{cris}}(X_0^\times/K^\times,x_0)$ are weakly independent of $\ell$, and hence we may conclude.
\end{proof}
 	
\bibliographystyle{../../Templates/bibsty}
\bibliography{../../lib.bib}

\end{document}